\newcommand{ \R } { \mathbb{R} }
\newcommand{ \N } { \mathbb{N} }
\newcommand{\w}{\omega}
\newcommand{\wstar}{\omega^*}
\newcommand{\Cstar}{C^*}
\newcommand{\cont}{\mathfrak{c}}
\newcommand{\script}{\mathcal}
\newcommand{\parentheses}[1]{{\left( {#1} \right)}}
\newcommand{\p}{\parentheses}
\newcommand{\of}{\parentheses}
\newcommand{\tuple}{\parentheses}
\newcommand{\closure}[1]{\overline{#1}}
\newcommand{\interior}[1]{\mathrm{int}\of{#1}}
\newcommand{\Set}[1]{{\left\lbrace {#1} \right\rbrace}}
\newcommand{\singleton}{\Set}
\newcommand{\Union}{\bigcup}
\newcommand{\cardinality}[1]{{\left\lvert {#1} \right\rvert}}
\newcommand{\pair}[1]{\langle {#1} \rangle}
\def\set#1:#2{\Set{{#1} \colon {#2}}}
\newcommand{\continuum}{\mathfrak{c}}
\newcommand{\Cwstar}{C_k\p{\wstar,\wstar}}
\newcommand{\CkX}{C_k\p{X,X}}
\newcommand{\CompactOpen}[2]{\left [ #1, #2 \right ]}
\newcommand{\preimage}[2]{#2 ^{-1}\of{#1}}	
\newcommand{\image}[2]{#2\of{#1}}		
\newcommand{\collection}[1]{\script{#1}}	
\newcommand{\intersection}{\cap}
\newcommand{\Intersection}{\bigcap}
\newcommand{\explicitSet}[1]{\left \{ #1 \right \}}
\newcommand{\sequence}[2]{\left \langle #1 \; \colon \; #2 \right \rangle}
\newcommand{\remainder}[1]{#1^*}
\newcommand{\accumulation}[1]{{\closure{#1} \setminus {#1}}}
\newcommand{\convergesTo}{\rightarrow}
\newcommand{\eval}{\textrm{ev}}
\newcommand{\function}[3]{{#1} \colon {#2} \to {#3}}
\newcommand{\card}[1]{\left | #1 \right |}
\newcommand{\weight}[1]{w\of{#1}}
\newcommand{\restrict}[2]{#1\restriction{#2}}
\newcommand{\SC}[1]{\beta #1}
\newcommand{\id}{\operatorname{id}}
\newcommand{\SCinjectives}{S_1(\omega)}
\newcommand{\SCfins}{S(\omega)}
\newcommand{\mesh}{\#}
\newcommand{\cat}{^\frown}
\newcommand{\successor}[1]{\textnormal{succ}\p{#1}}
\newcommand{\operator}[1]{\textrm{#1}}
\newcommand{\conts}[2]{C\of{{#1},{#2}}}
\newcommand{\contsWstar}{\conts{\wstar}{\wstar}}
\begin{document}
\title{Self-maps under the compact-open topology}
\author{Richard J.\ Lupton, Max F.\ Pitz}
\address{Mathematical Institute\\University of Oxford\\Oxford OX2 6GG\\United Kingdom}
\email{richard.lupton@gmail.com}
\email{max.f.pitz@gmail.com}
\subjclass[2010]{Primary 54C35; Secondary 54D35, 54G05, 54E52}
\keywords{Compact-open topology, self-maps, Stone-\v{C}ech remainder, $\wstar$}

\begin{abstract}
This paper investigates the space $\Cwstar$, the space of continuous self-maps on the Stone-\v{C}ech remainder of the integers, $\wstar$, equipped with the compact-open topology. Our main results are that 
\begin{itemize}
\item $\Cwstar$ is Baire,
\item Stone-\v{C}ech extensions of injective maps on $\w$ form a dense set of weak $P$-points in $\Cwstar$, 
\item it is independent of ZFC whether $\Cwstar$ contains $P$-points, and that
\item $\Cwstar$ is not an $F$-space, but contains, as $\wstar$, no non-trivial convergent sequences. 
\end{itemize}
\end{abstract}

\maketitle
\thispagestyle{plain}

\newtheorem*{fundamentallemma}{Lemma~\ref{fundamental}}
\newtheorem{mythm}{Theorem} \numberwithin{mythm}{section} 
\newtheorem{myprop}[mythm]{Proposition}
\newtheorem{myobs}[mythm]{Observation}
\newtheorem{mycor}[mythm]{Corollary}
\newtheorem{mylem}[mythm]{Lemma} 
\newtheorem{myquest}[mythm]{Question} 
\newtheorem*{myconj}{Conjecture}
\newtheorem{mydef}[mythm]{Definition}
\newtheorem{myclaim}{Claim}
\newtheorem{myremark}[mythm]{Remark}
\newtheorem{mycase}{Case}
\newtheorem{mycase2}{Case}
\newtheorem{myexample}[mythm]{Example}

\section{Introduction}

Spaces of continuous functions are amongst the most natural and important objects in topology. This paper studies continuous \emph{self-maps} of topological spaces, and in particular continuous self-maps on $\wstar$, the Stone-\v{C}ech remainder of the integers.

The space $\wstar$ is one of the most important spaces in topology and its structure has been extensively examined. More importantly, however, our choice of $\wstar$, and our decision to study self-maps---as opposed to the more widely studied real-valued functions prevalent in topology and functional analysis---is motivated by the observation that $\contsWstar$ contains the Stone-\v{C}ech extensions of finite-to-one maps on $\w$. More precisely, the Stone-\v{C}ech extension of any finite-to-one map $\omega \to \omega$ restricts to a continuous map $\wstar \to \wstar$.

The finite-to-one maps $\w \to \w$ appear in important places in set-theoretic topology. M.E.\ Rudin proved in \cite{RudinComposants} for instance, that if there are two points $x,y \in \wstar$ such that for each finite-to-one map $\function{\phi}{\omega}{\omega}$, $\beta \phi (x) \neq \beta \phi (y)$, then $\mathbb{H}^*$ has at least two composants ($\mathbb{H}^*$ is the Stone-\v{C}ech remainder of the half-line $[0, \infty)$, and is a connected compact Hausdorff space (a continuum); the composant of a point $x$ is the union of all proper subcontinua containing that point). More generally, the \emph{Rudin-Blass order} on $\wstar$ is defined in terms of finite-to-one maps \cite{laflamme} (it is the finite-to-one version of the Rudin-Keisler order).

By equipping $\contsWstar$ with a topology, we can study the finite-to-one maps $\omega \to \omega$ (mod finite) as a topological space in its own right, which we denote $S(\w)$.  The compact-open topology is a natural topology to use here. Thinking of $\wstar$ as a space of ultrafilters on its Boolean Algebra, subsets of $\w$ modulo finite differences, an open set in the compact-open topology specifies where elements of the Boolean Algebra can be mapped. Since finite-to-one maps $\w \to \w$ are usually studied in relation to their action on ultrafilters, seeking a topology that interacts well with ultrafilters is sensible.

Besides studying extensions of finite-to-one maps, we also assess the extent to which properties of $\wstar$ are mirrored in $\contsWstar$, with respect to a suitable function space topology. For this, the compact-open topology again seems to naturally present itself. For example, in the context of self-maps on a locally compact Hausdorff space $X$, the compact-open topology is the smallest topology on $C(X,X)$ giving a topological semi-group such that the canonical embedding $X\hookrightarrow C(X,X)$, sending a point to the corresponding constant function, is an embedding \cite[VIII.1.9]{Magill}.

Once $\conts{X}{X}$ has been equipped with the compact-open topology, we denote it, in the standard way, by $\CkX$. In this paper we show that $\Cwstar$ and its subspace $S(\w)$ are both Baire spaces (Theorems~\ref{thm:CkBaire} and \ref{thm:FT1BaireSpace}). Further, we show that $S(\w)$ is a dense subspace of $\Cwstar$, all points of which are weak $P$-points in $\Cwstar$ (Theorems~\ref{thm:FT1dense} and \ref{denseweakPpoints}). More generally, we show in Theorem~\ref{autoweakPpoints} that every open finite-to-one map $X \to X$ on a compact Hausdorff, nowhere c.c.c.\ $F$-space $X$ is a weak $P$-point in $\CkX$.

Further, we show that for a zero-dimensional compact Hausdorff space $X$, a map in $\CkX$ is a $P$-point if and only if it has finite range, all points of which are $P$-points in $X$ (Theorem~\ref{consindepententPpoints}). Hence $\Cwstar$ has $P$-points precisely when $\wstar$ does, an assertion well-known to be independent of ZFC.

Lastly, we show that $\Cwstar$ is not an $F$-space (Theorem~\ref{thm:CkNotFspace}), but still contains, as does $\wstar$, no convergent sequences. Indeed, we prove that $\CkX$ never contains non-trivial convergent sequences, for any compact $F$-space $X$ (Theorem~\ref{noconvergentsequences}).

We would like to thank Rolf Suabedissen, Alan Dow and Jan van Mill for interesting discussions on the subject.

\section{$F$-spaces and $\wstar$}

This section contains a brief introduction to the spaces $\beta \w$ and $\wstar$. Recall that a subspace $Y \subset X$ is called $\Cstar$-embedded \index{C@$\Cstar$-embedded subspace} if every continuous real-valued bounded function on $Y$ can be extended to a continuous function on $X$. For every non-compact Tychonoff space $X$, its Stone-\v{C}ech compactification $\beta X$ is a compact Hausdorff space in which $X$ is dense and $C^*$-embedded, and $X^* = \beta X \setminus X$ is its remainder. For a concrete description of $\beta \w$ and $\wstar$ in terms of ultrafilters on the natural numbers see for example \cite{Rudin}. 

The space $\wstar$ is a zero-dimensional compact Hausdorff space of cardinality $2^\cont$ and weight $\cont=2^{\aleph_0}$, containing no isolated points. Every non-empty clopen subset of $\wstar$ is  again homeomorphic to $\wstar$. The space $\wstar$ contains a family of $\cont$ disjoint clopen sets and therefore has density $\cont$ \cite[3.6.18]{Eng}. 

Further, the space $\wstar$ has two additional crucial topological properties, the $G_\delta$- and the $F$-space property. Recall that a subset of a Tychonoff space of the form $f^{-1}(0)$ for some real-valued continuous function $f$ is called a \emph{zero-set}. A \emph{cozero-set} is the complement of a zero-set. \index{cozero-set} A space $X$ is called an $F$\emph{-space} if each cozero-set is $\Cstar$-embedded in $X$. A space is said to have the $G_\delta$-\emph{property} if every non-empty $G_\delta$-set has non-empty interior. 

Indeed, these properties are fairly common amongst Stone-\v{C}ech remainders. It is well-known that whenever $X$ is a locally compact $\sigma$-compact space then $X^*$ is an $F$-space \cite[14.16]{Ultrafilters}, and if $X$ is zero-dimensional, locally compact and $\sigma$-compact, then $X^*$ is compact zero-dimensional without isolated points and has the $G_\delta$-property \cite[14.17]{Ultrafilters}. 

A zero-dimensional compact space without isolated points with the $G_\delta$- and the $F$-space property is often called a Parovi\v{c}enko space. The reason why these properties have received special attention lies in the well-known result that under the Continuum Hypothesis, all Parovi\v{c}enko spaces of weight $\cont$ are homeomorphic to $\wstar$ \cite{parov}.

In the following, we list some more background results on $F$-spaces. Recall that subspaces $A,B \subset X$ are \emph{completely separated} if there is a continuous $f \colon X \to [0,1]$ such that $A \subset f^{-1}(0)$ and $B \subset f^{-1}(1)$. Equivalently, two subspaces are completely separated if they are contained in disjoint zero-sets. Proofs of the following results are contained in \cite[14.25]{Rings} and in the exercises \cite[14N]{Rings} and \cite[3.6.G]{Eng}.

\begin{enumerate}\itemsep0em 
\item A Tychonoff space is an $F$-space if and only if disjoint cozero-subsets are completely separated.
\item In an $F$-space, disjoint open $F_\sigma$-subsets have disjoint closures, and in normal spaces both conditions are equivalent.
\item Closed subspaces of normal $F$-spaces are $F$-spaces.
\item Infinite closed subspaces of compact $F$-spaces contain a copy of $\beta \w$. Therefore, compact $F$-spaces do not contain convergent sequences.
\end{enumerate}

\section{\texorpdfstring{A nice basis for $\CkX$}{A nice basis}}
The compact-open topology on the space $C\of{X,Y}$ of continuous functions $X \to Y$ is the topology generated by a subbasis consisting of sets of the form
\[ \CompactOpen{C}{U} = \set{f \in C\of{X,Y}}:{ \image{C}{f} \subseteq U} \]
where $C$ is a compact subset of $X$ and $U$ is an open subset of $Y$. The resulting topological space is denoted by $C_k(X,Y)$. A good reference for the basic properties of the compact-open topology is \cite[\S3.4]{Eng}.

In this section we prove that if $X$ is locally compact and zero-dimensional, the space $\CkX$ has a particularly nice basis, consisting of elements of the form $\bigcap_{i=0}^n [A_i,B_i]$ with $A_i,B_i \subset X$ compact clopen such that the $A_i$ are pairwise disjoint. 

\begin{mylem}
\label{subbasiclemma}
In a locally compact zero-dimensional space $X$, the collection of sets of the form $[A,B]$, for $A,B$ compact clopen subsets of $X$, forms a clopen subbasis for $\CkX$.
\end{mylem}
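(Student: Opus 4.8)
The plan is to verify two independent assertions hidden in the statement: that the collection $\Set{[A,B] : A,B \text{ compact clopen}}$ generates the compact-open topology (the subbasis claim), and that each such $[A,B]$ is simultaneously open and closed in $\CkX$. Before either, I would record the one structural fact that drives everything: in a locally compact zero-dimensional (hence Hausdorff) space $X$, the compact clopen sets form a basis. Given $x$ and an open neighbourhood $W$, local compactness supplies a compact neighbourhood of $x$, and zero-dimensionality lets me shrink to a clopen set $O$ with $x \in O \subseteq W$ contained in that compact neighbourhood; since $O$ is closed and sits inside a compact set, $O$ is itself compact. This is the tool I will apply repeatedly in both the domain and the range.

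For the subbasis claim, note first that every $[A,B]$ with $A,B$ compact clopen is already one of the standard subbasic sets $[C,U]$ (compact $C$, open $U$), so the topology generated by the $[A,B]$ is contained in the compact-open topology. For the reverse inclusion it suffices to show each standard subbasic set $[C,U]$ is open in the generated topology, i.e.\ that every $f \in [C,U]$ has a neighbourhood $\bigcap_{i=0}^n [A_i,B_i] \subseteq [C,U]$. I would argue pointwise: for each $x \in C$ we have $f(x) \in U$, so pick a compact clopen $B_x$ with $f(x) \in B_x \subseteq U$, and then, using continuity of $f$ together with the basis of compact clopen sets, a compact clopen $A_x \ni x$ with $f(A_x) \subseteq B_x$; thus $f \in [A_x,B_x]$. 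The sets $A_x$ cover the compact set $C$, so finitely many $A_{x_0},\dots,A_{x_n}$ suffice, and then $f \in \bigcap_i [A_{x_i},B_{x_i}] \subseteq [C,U]$: any $g$ in the intersection sends each $y \in C$, which lies in some $A_{x_i}$, into $B_{x_i} \subseteq U$.

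For clopen-ness, openness is free since each $[A,B]$ is subbasic. Closedness I would establish by showing the complement is open. If $f \notin [A,B]$, then $f(a) \in X \setminus B$ for some $a \in A$; here the hypothesis that $B$ is \emph{clopen}, not merely open, is essential, because it makes $X \setminus B$ clopen and so able to contain a basic compact clopen set. Choosing a compact clopen $B'$ with $f(a) \in B' \subseteq X \setminus B$, and then (using continuity and that $A$ is clopen) a compact clopen $A' \ni a$ with $A' \subseteq A \cap f^{-1}(B')$, I get $f \in [A',B']$. Any $g \in [A',B']$ satisfies $g(A') \subseteq B'$, which is disjoint from $B$, while $A' \subseteq A$ is nonempty; hence $g(A) \not\subseteq B$, so $[A',B']$ is an open neighbourhood of $f$ lying entirely in the complement of $[A,B]$.

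The genuinely delicate half is closedness. The subbasis argument is the routine refinement of a compact-open subbasis along any basis of the range; the new content — and the reason compact \emph{clopen} sets, rather than compact open or merely compact sets, are the correct objects — is precisely that closedness needs $X \setminus B$ to again contain basic compact clopen pieces, which is exactly what the clopen-ness of $B$ provides.
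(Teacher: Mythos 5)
Your proof is correct, but it takes a different route from the paper's in both halves, so a comparison is worthwhile. For the subbasis claim, the paper (following \cite[XII.5.1]{Dugundji}) exploits the fact that the compact clopen sets form a base for $X$ that is \emph{closed under finite unions}: given $f \in [C,U]$, it covers the compact set $f(C)$ by finitely many compact clopen subsets of $U$ and takes their union to obtain a single $B$ with $f(C) \subseteq B \subseteq U$, then likewise a single compact clopen $A$ with $C \subseteq A \subseteq f^{-1}(B)$, yielding $f \in [A,B] \subseteq [C,B] \subseteq [C,U]$. Your pointwise cover of $C$ together with the finite intersection $\bigcap_{i} [A_{x_i},B_{x_i}]$ proves exactly what subbasis-hood requires, whereas the paper's finite-union trick shows slightly more: each standard subbasic set $[C,U]$ is a \emph{union} of single sets $[A,B]$, with no intersections needed. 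For closedness, your sense of where the delicacy lies is inverted relative to the paper, where this half is a one-liner: if $f \notin [A,B]$, pick $x \in A$ with $f(x) \notin B$; then $[\{x\}, X \setminus B]$ --- a standard compact-open subbasic set, since $\{x\}$ is compact and $X \setminus B$ is open precisely because $B$ is clopen --- is a neighbourhood of $f$ disjoint from $[A,B]$. This suffices because the lemma asserts clopenness in $C_k(X,X)$, whose topology has already been identified with the generated one. Your more laborious witness $[A',B']$, with both sets compact clopen, is also correct and buys a small extra: it exhibits the complement of $[A,B]$ as open already in the topology generated by the collection itself. Both arguments agree on the key point you isolate, namely that clopenness of $B$ is what makes $X \setminus B$ a usable range set.
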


\begin{proof}
Let $\collection{B}$ be the collection of compact clopen subsets of $X$. Note that $\collection{B}$ is a base for $X$ which is closed under finite unions. It follows from \cite[XII.5.1]{Dugundji} that the collection $\set{[A,B]}:{A,B \in \script{B}}$ forms a clopen subbasis for $\CkX$.

To familiarise ourselves with the compact-open topology, we spell out the argument contained in the reference. Consider a subbasic open set $\CompactOpen{C}{U}$ in $\CkX$, with $C \subseteq X$ compact and $U \subseteq X$ open, and some $f \in \CompactOpen{C}{U}$. Since every set of the form $[A,B]$ for $A,B \in \collection{B}$ is open in $\CkX$, it is enough to show that there are $A, B \in \collection{B}$ with $f \in \CompactOpen{A}{B} \subseteq \CompactOpen{C}{U}$.

First, observe that $\image{C}{f}$ is a compact subset of $U$, so using that $\collection{B}$ is a base for $X$ closed under finite unions, we find $B \in \collection{B}$ with $\image{C}{f} \subseteq B \subseteq U$. Then $f \in \CompactOpen{C}{B} \subseteq \CompactOpen{C}{U}$. Now, $\preimage{B}{f}$ is open, and contains the compact set $C$, so using that $\collection{B}$ is a base for $X$ closed under finite unions again, we may find $A \in \collection{B}$ with $C \subseteq A \subseteq \preimage{B}{f}$. Furthermore $\image{A}{f} \subseteq B$, so $f \in \CompactOpen{A}{B}$. But clearly then
\[ f \in \CompactOpen{A}{B} \subseteq \CompactOpen{C}{B} \subseteq \CompactOpen{C}{U}\]
which is precisely what we required.

To see that $\CompactOpen{A}{B}$ is clopen, note that for $f \notin [A,B]$ there is $x \in A$ such that $f(x) \notin B$. Thus, $[\singleton{x},X\setminus B]$ is a neighbourhood of $f$ not intersecting $[A,B]$.
\end{proof}

\begin{mylem}
\label{prettycoollemma}
Suppose $A_i, B_i$ are clopen in $X$ for $i \in \explicitSet{0,\dots,n}$. Then there are clopen $U_i, V_i$ for $i \in \explicitSet{0,\dots, m}$ so that in $C_k\of{X, X}$,
\[ \Intersection_{i=0}^n \CompactOpen{A_i}{B_i} = \Intersection_{i=0}^m \CompactOpen{U_i}{V_i}, \]
and such that $\explicitSet{U_0, \dots, U_m}$ is a pairwise disjoint refinement of $\explicitSet{A_0, \dots, A_n}$.
\end{mylem}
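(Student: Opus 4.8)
The plan is to reduce everything to two elementary identities for subbasic sets and then pass to the atoms of the Boolean algebra generated by the $A_i$. The first identity is that for a fixed clopen $A$ one has $[A, B_1] \cap [A, B_2] = [A, B_1 \cap B_2]$, since $f(A) \subseteq B_1$ together with $f(A) \subseteq B_2$ says exactly $f(A) \subseteq B_1 \cap B_2$. The second is that $[A_1 \cup A_2, B] = [A_1, B] \cap [A_2, B]$, since $f(A_1 \cup A_2) = f(A_1) \cup f(A_2) \subseteq B$ holds iff both $f(A_1) \subseteq B$ and $f(A_2) \subseteq B$. Both extend immediately to finite unions and intersections. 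These are the only facts about $[\,\cdot\,,\cdot\,]$ that the argument uses.

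Next I would introduce the atoms of the finite Boolean algebra of clopen sets generated by $A_0, \dots, A_n$, namely the nonempty sets of the form $\bigcap_{i=0}^n A_i^{\epsilon_i}$, where $\epsilon \in \{0,1\}^{n+1}$ and $A_i^1 = A_i$, $A_i^0 = X \setminus A_i$. These atoms are clopen and pairwise disjoint, and each $A_i$ is the union of precisely those atoms whose defining tuple has $\epsilon_i = 1$. Applying the second identity to each $A_i$ rewrites $[A_i, B_i] = \bigcap \{\, [W, B_i] : W \text{ an atom with } W \subseteq A_i \,\}$, so that $\bigcap_{i=0}^n [A_i, B_i]$ becomes an intersection of factors $[W, B_i]$ indexed by pairs consisting of an atom $W \subseteq A_i$ and the corresponding index $i$.

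I would then group these factors by the atom $W$. For a fixed atom $W$ the codomain constraints that occur are exactly $\{\, B_i : W \subseteq A_i \,\}$, and the first identity collapses them into the single factor $[W, V_W]$ with $V_W := \bigcap \{\, B_i : W \subseteq A_i \,\}$, a finite intersection of clopen sets and hence clopen. This yields $\bigcap_{i=0}^n [A_i, B_i] = \bigcap_W [W, V_W]$, the intersection ranging over all atoms $W$ contained in some $A_i$. The lone atom $X \setminus \bigcup_i A_i$ lies in no $A_i$ and so never appears; equivalently it would contribute only the vacuous factor $[W, X] = C(X,X)$, so discarding it changes nothing. Enumerating the surviving atoms as $U_0, \dots, U_m$ and setting $V_j := V_{U_j}$ gives the asserted equality.

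Finally, the refinement claim is immediate: the $U_j$ are distinct atoms, hence pairwise disjoint, and each $U_j$ was retained precisely because it sits inside at least one $A_i$, so $\{U_0, \dots, U_m\}$ is a pairwise disjoint refinement of $\{A_0, \dots, A_n\}$. The only care needed is bookkeeping in degenerate cases: empty atoms are simply dropped, and if no atom survives (for instance when every $A_i$ is empty, so the original intersection is all of $C(X,X)$) one takes the single trivial factor $[\emptyset, X]$. I expect no genuine obstacle here; the content is purely the combinatorics of the two subbasic identities together with the atom decomposition, with no appeal to topology beyond the fact that clopenness is preserved under finite Boolean operations.
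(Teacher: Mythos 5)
Your proof is correct, but it takes a genuinely different route from the paper's. The paper argues by induction on $n$: the base case is the three-term identity $[A_0,B_0]\cap[A_1,B_1]=[A_0\cap A_1,B_0\cap B_1]\cap[A_0\setminus A_1,B_0]\cap[A_1\setminus A_0,B_1]$, and the inductive step splits the new set $A_n$ against an already pairwise disjoint family $\{A_0,\dots,A_{n-1}\}$ in the same three-part fashion. Your argument is instead a one-shot computation: decompose each $A_i$ into the atoms of the finite Boolean algebra generated by $A_0,\dots,A_n$, using only the two subbasic identities $[A,B_1]\cap[A,B_2]=[A,B_1\cap B_2]$ and $[A_1\cup A_2,B]=[A_1,B]\cap[A_2,B]$, then regroup the factors atom by atom to get $V_W=\bigcap\{B_i : W\subseteq A_i\}$. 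The two arguments in fact produce the same partition --- unwinding the paper's recursion yields exactly the nonempty atoms $\bigcap_i A_i^{\epsilon_i}$ with at least one $\epsilon_i=1$, of which there are at most $2^{n+1}-1$ --- so neither is more economical in the number of pieces; what yours buys is an explicit closed form for the $V$'s and manifest disjointness and refinement, while the paper's induction avoids the atom machinery and keeps each step to a single checkable identity. Your treatment of the degenerate cases (dropping empty atoms, discarding the outer atom $X\setminus\bigcup_i A_i$ since its factor $[W,X]$ is vacuous, and noting that every retained atom lies inside some $A_i$) is sound. One small observation worth adding for the application in Theorem~\ref{bigintersectionresult}, where the $A_i$ are \emph{compact} clopen: although the complements $X\setminus A_i$ used to form atoms need not be compact in a merely locally compact $X$, each retained atom is a closed subset of some compact $A_i$ and hence compact, so your construction also preserves compactness where it is needed.
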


\begin{proof}
We work by induction on $n$. For $n=0$, there is nothing to prove. For $n=1$, observe that \[ \CompactOpen{A_0}{B_0} \intersection \CompactOpen{A_1}{B_1} = \CompactOpen{A_0 \intersection A_1}{B_0 \intersection B_1} \intersection \CompactOpen{A_0 \setminus A_1}{B_0} \intersection \CompactOpen{A_1 \setminus A_0}{B_1}. \]

So let $n \geq 2$, and suppose the Lemma holds at $n-1$. Consider the basic open set $\Intersection_{i=0}^n \CompactOpen{A_i}{B_i}$. Applying the inductive hypothesis to $\Intersection_{i=0}^{n-1} \CompactOpen{A_i}{B_i}$, we may assume without loss of generality that the collection $\explicitSet{A_0, \dots, A_{n-1}}$ is already pairwise disjoint. Generalising our observation from the case $n=2$, we obtain
\[ \CompactOpen{A_n}{B_n} \intersection \Intersection_{i=0}^{n-1} \CompactOpen{A_i}{B_i} = \CompactOpen{A_n\setminus \Union_{i=0}^{n-1} A_i}{B_n} \intersection \Intersection_{i=0}^{n-1} \CompactOpen{A_n\intersection A_i}{B_n \intersection B_i} \intersection \Intersection_{i=0}^{n-1} \CompactOpen{A_i \setminus A_n}{B_i}, \]
from which the result follows.
\end{proof}

\begin{mythm}
\label{bigintersectionresult}
Suppose $X$ is locally compact and zero-dimensional. Then $C_k\of{X, X}$ has a base of sets of the shape
\[ \Intersection_{i=0}^n \CompactOpen{A_i}{B_i}, \]
for $A_i, B_i$ compact clopen in $X$, and $\explicitSet{A_0, \dots, A_n}$ pairwise disjoint.
\end{mythm}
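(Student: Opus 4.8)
The plan is to assemble the two preceding lemmas; the only real work is to check that compactness of the clopen sets is preserved along the way. First I would invoke Lemma~\ref{subbasiclemma}, which tells us that in a locally compact zero-dimensional $X$ the sets $[A,B]$ with $A,B$ compact clopen form a subbasis for $\CkX$. Taking finite intersections of subbasic sets then gives a base for $\CkX$, namely the family of all sets $\bigcap_{i=0}^n [A_i,B_i]$ with each $A_i,B_i$ compact clopen. This is the base I want to massage into the stated pairwise-disjoint shape.

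Next I would apply Lemma~\ref{prettycoollemma} to each such basic set, rewriting $\bigcap_{i=0}^n [A_i,B_i]$ as $\bigcap_{i=0}^m [U_i,V_i]$ with $\{U_0,\dots,U_m\}$ pairwise disjoint. The point requiring care is that the $U_i,V_i$ delivered by that lemma are again \emph{compact} clopen when the inputs are. This holds because the construction in the proof of Lemma~\ref{prettycoollemma} only forms finite unions, finite intersections and relative complements of the given $A_i,B_i$, and the compact clopen subsets of $X$ are closed under these operations: a clopen subset of a compact set is a closed subset of it, hence compact. In fact the refinement clause already places each $U_i$ inside some $A_j$, so the $U_i$ are compact directly, while the $V_i$ arise as intersections of the original $B_i$ and are handled the same way.

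Finally I would conclude that the family $\mathcal{B}$ of all sets $\bigcap_{i=0}^m [U_i,V_i]$ with $U_i,V_i$ compact clopen and $\{U_0,\dots,U_m\}$ pairwise disjoint is itself a base. On one hand every member of $\mathcal{B}$ is a finite intersection of subbasic sets, so it lies in the base produced in the first step; on the other hand, by the second step, every member of that base equals a member of $\mathcal{B}$. The two families therefore describe exactly the same open sets, and since the first is a base, so is $\mathcal{B}$, which is the assertion of the theorem.

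I do not anticipate a genuine obstacle, since the combinatorial core is already contained in Lemma~\ref{prettycoollemma}; the only thing demanding attention is the bookkeeping that compactness survives the relevant set operations, which is why I would make explicit that the compact clopen sets form a field of sets.
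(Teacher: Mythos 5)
Your proposal is correct and follows essentially the same route as the paper, whose proof is simply ``Clear by Lemmas~\ref{subbasiclemma} and~\ref{prettycoollemma}''. The one point you spell out --- that the $U_i$ remain compact because the refinement places each inside some compact $A_j$, and the $V_i$ remain compact because they arise as intersections of the original $B_i$ --- is precisely the bookkeeping the paper leaves implicit, so nothing is missing on either side.
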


\begin{proof}
Clear by Lemmas~\ref{subbasiclemma} and~\ref{prettycoollemma}.
\end{proof}

By similar considerations one can prove that for a locally compact, zero-dimensional space $X$, the space $C_k\of{X, X}$ has a $\pi$-base consisting of sets of the shape
$ \Intersection_{i=0}^n \CompactOpen{C_i}{D_i}$
where $\explicitSet{C_0, \dots, C_n}$ and $\explicitSet{D_0, \dots, D_n}$ are both pairwise disjoint collections of compact clopen subsets of $X$, \cite{LuptonThesis}.

\section{First topological properties of spaces of self-maps} 
The above results about bases and $\pi$-bases of $\CkX$ allow us to make first observations about topological properties of $\Cwstar$, and more generally $\CkX$. 

Recall that for all Tychonoff spaces $X$ the function space $\CkX$ is also Tychonoff \cite[3.4.15]{Eng}.

\begin{myobs}
For every locally compact zero-dimensional space $X$, the weight of  $\CkX$ equals the weight of $X$.
\end{myobs}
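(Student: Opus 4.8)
The plan is to establish the two inequalities $w(\CkX) \le w(X)$ and $w(X) \le w(\CkX)$ separately: the first using the explicit subbasis from the previous section, and the second using the embedding of $X$ as the constant maps.

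For the upper bound $w(\CkX) \le w(X)$, I would first recall the standard fact that every base of a space contains a base of cardinality at most its weight (e.g.\ \cite[1.1.15]{Eng}). Since $X$ is locally compact and zero-dimensional, the compact clopen subsets of $X$ form a base, so there is a base $\collection{B}$ consisting of compact clopen sets with $\card{\collection{B}} \le w(X)$; closing $\collection{B}$ under finite unions does not increase its cardinality (for $w(X)$ infinite), so I may assume $\collection{B}$ is closed under finite unions. Re-reading the proof of Lemma~\ref{subbasiclemma}, the only property of the family of \emph{all} compact clopen sets used there is that it is a base closed under finite unions; hence the identical argument shows that $\set{[A,B]}:{A,B \in \collection{B}}$ is already a subbasis for $\CkX$. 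This subbasis has cardinality at most $\card{\collection{B}}^2 \le w(X)$, and the collection of its finite intersections, which is by definition a base, therefore also has cardinality at most $w(X)$. (Theorem~\ref{bigintersectionresult} pins down a particularly clean such base, but for the cardinality count the subbasis suffices.) This yields $w(\CkX) \le w(X)$.

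For the lower bound $w(X) \le w(\CkX)$, I would exhibit $X$ as a subspace of $\CkX$ via the constant maps. Define $\function{e}{X}{\CkX}$ by letting $e(x)$ be the function constantly equal to $x$. Computing $e^{-1}([C,U]) = U$ for nonempty compact $C$ and open $U$ shows that $e$ is continuous, while intersecting $[\singleton{z},U]$ (for a fixed point $z \in X$) with the image $e(X)$ recovers $e(U)$, so $e$ is an embedding. Since weight does not increase on passing to a subspace, $w(X) = w(e(X)) \le w(\CkX)$. Combining the two inequalities gives $w(\CkX) = w(X)$.

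I expect the only delicate point to be the upper-bound bookkeeping: one must take care to extract a compact clopen base of size exactly $w(X)$ that is closed under finite unions \emph{before} quoting the argument of Lemma~\ref{subbasiclemma}, and to note that the cardinal arithmetic ($\card{\collection{B}}^2 \le w(X)$ and the count of finite subsets) is routine once $w(X)$ is taken infinite, with the finite case handled trivially by the same reasoning.
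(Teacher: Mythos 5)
Your proposal is correct and is essentially the paper's argument: the paper proves this observation simply by citing Lemma~\ref{subbasiclemma}, with exactly your two inequalities left implicit — the upper bound by running that lemma's proof over a compact clopen base of size $w(X)$ closed under finite unions, and the lower bound via the constant-map embedding $X \hookrightarrow \CkX$ (which the paper records later via \cite[XII.1.2]{Dugundji}). Your elaboration, including the cardinality bookkeeping and the convention that weight is taken infinite, fills in the details faithfully.
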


\begin{proof}
By Lemma~\ref{subbasiclemma}.
\end{proof}

\begin{myobs}
For a non-empty space $X$, the cellularity of $\CkX$ is at least as big as the cellularity of $X$. 
\end{myobs}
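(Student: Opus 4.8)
The plan is to push a witnessing family of pairwise disjoint open sets from $X$ straight up into $\CkX$ using the defining subbasis of the compact-open topology, with constant maps serving as the non-trivial witnesses. Writing $c(\cdot)$ for cellularity, I would argue that for every cardinal $\kappa$ realised by a pairwise disjoint family of non-empty open subsets of $X$, the same $\kappa$ is realised in $\CkX$; taking the supremum over all such $\kappa$ then gives $c\of{\CkX} \geq c\of{X}$. So I would begin with an arbitrary family $\set{U_\alpha}:{\alpha < \kappa}$ of pairwise disjoint non-empty open subsets of $X$ and aim to construct from it a pairwise disjoint family of $\kappa$ non-empty open subsets of $\CkX$.

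The key step is to fix, once and for all, a single point $p \in X$, which exists precisely because $X$ is non-empty. Since the singleton $\Set{p}$ is compact, each set $\CompactOpen{\Set{p}}{U_\alpha} = \set{f \in \CX}:{f(p) \in U_\alpha}$ is a subbasic open subset of $\CkX$, and I claim that $\set{\CompactOpen{\Set{p}}{U_\alpha}}:{\alpha < \kappa}$ is the family we want. For non-emptiness I would, for each $\alpha$, choose a point $x_\alpha \in U_\alpha$ and take the constant map $X \to X$ with value $x_\alpha$; it is continuous, and sends $p$ into $U_\alpha$, so it lies in $\CompactOpen{\Set{p}}{U_\alpha}$. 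For pairwise disjointness, if some $f$ belonged to both $\CompactOpen{\Set{p}}{U_\alpha}$ and $\CompactOpen{\Set{p}}{U_\beta}$ with $\alpha \neq \beta$, then $f(p) \in U_\alpha \intersect U_\beta = \emptyset$, a contradiction.

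There is no substantive obstacle here; the argument uses nothing beyond the continuity of constant maps and the compactness of singletons, both of which hold in complete generality, which is why the hypothesis on $X$ can be weakened all the way down to non-emptiness. The only point requiring a little care is purely formal, namely phrasing the proof so that it is indifferent to whether the cellularity of $X$ is attained by a single family: I would therefore establish the bound cardinal-by-cardinal, for each $\kappa$ witnessed in $X$, and only afterwards pass to the supremum defining $c\of{\CkX}$.
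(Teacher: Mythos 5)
Your proof is correct and is essentially identical to the paper's: the paper likewise fixes a single point $x \in X$ and uses the disjoint family $\set{[\singleton{x},A_\alpha]}:{\alpha<\kappa}$ of subbasic open sets in $\CkX$. Your additional remarks --- witnessing non-emptiness via constant maps and handling the case where cellularity is not attained by passing through each $\kappa$ separately --- merely make explicit details the paper leaves implicit.
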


\begin{proof}
Let $x \in X$, and $\set{A_\alpha}:{\alpha < \kappa}$ be a collection of disjoint open subsets of $X$. Then the family $\set{[\singleton{x},A_\alpha]}:{\alpha < \kappa}$ is a $\kappa$-sized collection of disjoint open subsets of $\CkX$.
\end{proof}

\begin{myobs}
$C_k\of{\remainder{\omega}, \remainder{\omega}}$ has density $\continuum$.
\end{myobs}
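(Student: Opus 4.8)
The plan is to compute $\density{\Cwstar}$ by sandwiching it between the cellularity and the weight of $\Cwstar$, using the standard inequalities $c(Y) \leq \density{Y} \leq \weight{Y}$ that hold for every topological space $Y$. The lower bound on density will come from cellularity, and the upper bound from weight, and both of these quantities have just been pinned down by the two Observations above.

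For the upper bound, the first Observation gives $\weight{\CkX} = \weight{X}$ for every locally compact zero-dimensional $X$. Applying this to $\wstar$, whose weight is $\continuum$, yields $\weight{\Cwstar} = \continuum$, and therefore $\density{\Cwstar} \leq \continuum$.

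For the lower bound, recall that $\wstar$ carries a family of $\continuum$ pairwise disjoint clopen sets, so its cellularity $c\p{\wstar}$ is at least $\continuum$. The second Observation then gives $c\p{\Cwstar} \geq c\p{\wstar} \geq \continuum$, and since cellularity is always bounded above by density, this forces $\density{\Cwstar} \geq \continuum$.

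Combining the two bounds yields $\density{\Cwstar} = \continuum$. I expect no genuine obstacle here: once the two Observations are in place, the result is a purely formal consequence of the elementary cardinal-function inequalities $c \leq d \leq w$. The only point worth confirming is that these hold in full generality, which they do—a dense set meets every member of a disjoint family of non-empty open sets in a distinct point, and selecting one point from each non-empty member of a base produces a dense set of size at most the weight.
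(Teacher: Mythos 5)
Your proof is correct and matches the paper's argument essentially verbatim: the upper bound $\density{\Cwstar} \leq \continuum$ comes from $\weight{\Cwstar} = \weight{\wstar} = \continuum$ via the first Observation, and the lower bound comes from the cellularity Observation together with the $\continuum$-sized disjoint clopen family in $\wstar$ and the inequality $c \leq d$. The paper compresses the lower bound into the phrase ``by the previous result,'' but the underlying reasoning is exactly yours.
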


\begin{proof}
Because $\weight{C_k\of{\remainder{\omega}, \remainder{\omega}}} = \continuum$, $C_k\of{\remainder{\omega}, \remainder{\omega}}$ has density at most $\continuum$. By the previous result, $C_k\of{\remainder{\omega}, \remainder{\omega}}$ has density at least $\continuum$.
\end{proof}

\begin{myobs}
$\card{C\of{\remainder{\omega}, \remainder{\omega}}} = 2^\continuum$.
\end{myobs}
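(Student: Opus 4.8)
The plan is to squeeze $\card{\conts{\wstar}{\wstar}}$ between $2^\continuum$ from below and $2^\continuum$ from above.

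For the lower bound I would simply exhibit enough maps directly. For each point $p \in \wstar$, the constant map $c_p \colon \wstar \to \wstar$ with value $p$ is continuous, and $c_p = c_q$ forces $p = q$. Since $\card{\wstar} = 2^\continuum$, this produces $2^\continuum$ pairwise distinct elements of $\conts{\wstar}{\wstar}$, giving $\card{\conts{\wstar}{\wstar}} \geq 2^\continuum$ at once.

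For the upper bound I would use the standard fact that a continuous map into a Hausdorff space is determined by its values on a dense set: if $f, g \colon \wstar \to \wstar$ are continuous and agree on a dense $D \subseteq \wstar$, then $\set{x \in \wstar}:{f(x) = g(x)}$ is closed (as $\wstar$ is Hausdorff) and contains $D$, hence equals $\wstar$. Thus restriction to a fixed dense set $D$ is an injection of $\conts{\wstar}{\wstar}$ into the set of all functions $D \to \wstar$. Since $\wstar$ has density $\continuum$, I may take $\card{D} = \continuum$, and as $\card{\wstar} = 2^\continuum$ the number of such functions is at most $\p{2^\continuum}^\continuum = 2^{\continuum \cdot \continuum} = 2^\continuum$. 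Combining the two bounds yields the claimed equality.

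There is no serious obstacle here; the argument is a routine cardinality count. The only inputs I rely on are that $\wstar$ is compact Hausdorff with $\card{\wstar} = 2^\continuum$ and density $\continuum$ (both recorded earlier in the excerpt), and the Hausdorff ``agreement on a dense set'' principle. Worth noting is that the same computation gives $\card{\conts{X}{X}} \leq \density{X}^{\card{X}}$-style bounds for any Tychonoff $X$, so only the explicit values $\density{\wstar} = \continuum$ and $\card{\wstar} = 2^\continuum$ are doing the real work.
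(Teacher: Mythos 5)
Your proof is correct and follows essentially the same route as the paper: constant maps give the lower bound $2^\continuum$, and the fact that a continuous map into a Hausdorff space is determined by its restriction to a dense set of size $\density{\wstar}=\continuum$ gives the upper bound $\p{2^\continuum}^\continuum = 2^\continuum$. The only difference is cosmetic—you spell out the closed-equalizer justification of the dense-set principle, which the paper simply invokes.
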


\begin{proof}
There are $2^\continuum$ constant functions. Also, $\remainder{\omega}$ is Hausdorff so continuous functions are determined completely by their action on a dense subset of $\remainder{\omega}$. Given $\remainder{\omega}$ has density $\continuum$ we have
\[ 2^\continuum \leq \card{C\of{\remainder{\omega}, \remainder{\omega}}} \leq \card{\p{\remainder{\omega}}^\continuum} = \p{2^\continuum}^\continuum = 2^\continuum. \qedhere \]
\end{proof}

\begin{myobs}
If $X$ is an infinite compact zero-dimensional Hausdorff space then $\CkX$ contains an infinite locally finite family of disjoint non-empty open sets.
\end{myobs}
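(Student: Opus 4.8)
The plan is to exploit that, by Lemma~\ref{subbasiclemma} and Theorem~\ref{bigintersectionresult}, $\CkX$ is zero-dimensional, so it suffices to produce an infinite, pairwise disjoint, locally finite family of nonempty \emph{clopen} sets. First I would assemble the combinatorial raw material inside $X$. Since $X$ is infinite, compact and Hausdorff it cannot be discrete, so it has a non-isolated point $p$; choosing $z_0 \neq p$ and a clopen $G$ with $z_0 \in G \not\ni p$ gives a nonempty clopen $G \subsetneq X$ whose complement $X \setminus G$ is a clopen neighbourhood of $p$ in which $p$ is still non-isolated. A standard recursion inside $X \setminus G$ then yields pairwise disjoint nonempty clopen sets $V_n \subseteq X \setminus G$ for $n \in \omega$; I fix a point $a_n \in V_n$ for each $n$.

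Next I would define the family by a ``first-exit'' pattern. For $k \in \omega$ set
\[ W_k = \CompactOpen{\singleton{a_k}}{X \setminus G} \cap \Intersection_{j < k} \CompactOpen{\singleton{a_j}}{G}, \]
a finite intersection of clopen subbasic sets, hence clopen. To see $W_k \neq \emptyset$, let $h_k \in \CkX$ agree with the constant map $z_0$ on the clopen set $\bigcup_{j<k} V_j$ and with the identity on its clopen complement; this is continuous, and $h_k(a_j) = z_0 \in G$ for $j<k$ while $h_k(a_k) = a_k \in X \setminus G$, so $h_k \in W_k$. Disjointness is immediate from a single evaluation: if $k<m$ then $W_m \subseteq \CompactOpen{\singleton{a_k}}{G}$ whereas $W_k \subseteq \CompactOpen{\singleton{a_k}}{X\setminus G}$, and these are disjoint. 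In fact each $f$ lies in at most one $W_k$, namely for the least $k$ (if any) with $f(a_k) \notin G$.

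The heart of the argument, and the step I expect to be the main obstacle, is local finiteness. The naive guesses---fattening a single evaluation $\CompactOpen{\singleton{a}}{V_n}$, or using disjoint targets $V_n$---both fail, since in the compact space $X$ the sets $V_n$ must accumulate, so evaluation-type families cluster at the offending constant maps; the first-exit design is chosen precisely to defeat this. Given $f \in \CkX$ I would split on the pattern $t_j = 1$ if $f(a_j) \in G$ and $t_j = 0$ otherwise. If some $t_j = 0$, then $f$ lies in the single member $W_{k_0}$ for the least such $k_0$, and $W_{k_0}$ is itself a neighbourhood of $f$ meeting no other member. The delicate case is $t_j = 1$ for every $j$, i.e.\ $f(a_j) \in G$ for all $j$, where $f$ lies in no $W_k$; here the trick is that $\preimage{G}{f}$ is \emph{clopen} (as $G$ is clopen and $f$ continuous) and contains every $a_j$, so $\CompactOpen{\preimage{G}{f}}{G}$ is an open neighbourhood of $f$, and any $g$ in it satisfies $g(a_j) \in G$ for all $j$ and hence lies in no $W_k$ at all. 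Thus every $f$ has a neighbourhood meeting at most one $W_k$, and $\set{W_k}:{k \in \omega}$ is the required infinite locally finite family of pairwise disjoint nonempty open sets.
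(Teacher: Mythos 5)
Your proof is correct, but it builds a genuinely different family from the paper's. The paper fixes an infinite clopen $A \subset X$ with non-empty complement $B = X\setminus A$ and disjoint non-empty clopen sets $A_n \subseteq A$, and takes $U_n = [A_n,B]\cap[A\setminus A_n,A]$: each member controls $f$ on \emph{all} of $A$, with $A_n$ the unique block escaping into $B$, and local finiteness is a three-case analysis for $f \notin \bigcup_n U_n$ (whether $f(A)\subseteq A$; whether $f^{-1}(B)$ meets some $A_n$; or $f^{-1}(B)$ meets $A$ but no $A_n$), exploiting clopenness of $f^{-1}(B)$ exactly as you exploit clopenness of $f^{-1}(G)$. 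You instead constrain only finitely many point evaluations in a triangular first-exit pattern $W_k = [\singleton{a_k}, X\setminus G]\cap\bigcap_{j<k}[\singleton{a_j},G]$, which makes pairwise disjointness (indeed, membership of any $f$ in at most one member) immediate and collapses local finiteness to two cases, the residual case being handled by the same key trick: $[f^{-1}(G),G]$ is a neighbourhood of $f$ avoiding every member because $f^{-1}(G)$ is clopen, hence compact, and contains all the $a_j$. Your construction in fact yields a \emph{discrete} family and needs no control of $f$ on infinite sets, at the price of extra preliminary work (a non-isolated point, the points $a_n$, the piecewise witnesses $h_k$); the paper's members are non-empty for trivial reasons (two-valued step functions) and are basic open sets of the canonical form of Theorem~\ref{bigintersectionresult}, with no choice of points needed. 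One small inaccuracy: $\singleton{a_j}$ need not be clopen, so Lemma~\ref{subbasiclemma} does not literally make $[\singleton{a_j},G]$ a ``clopen subbasic set''; it is clopen nonetheless, being $\eval_{a_j}^{-1}(G)$ for the continuous evaluation map of Lemma~\ref{evaliscts} (equivalently, $[\singleton{a},G]$ and $[\singleton{a},X\setminus G]$ are complementary open sets), and mere openness is all the statement requires anyway.
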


\begin{proof}
Let $A \subset X$ be an infinite clopen set with non-empty complement $B=X \setminus A$, and fix a collection $\set{A_n}:{n \in \w}$ of disjoint non-empty clopen subsets of $A$. For $n \in \w$ define the non-empty (basic) open sets 
$$U_n = [A_n,B] \cap [A\setminus A_n,A].$$
We claim the collection $\script{U}=\set{U_n}:{n \in \w}$ is locally finite. Indeed, suppose that $f \notin \bigcup \script{U}$. 
If $f(A) \subseteq A$, then $[A,A]$ is a neighbourhood of $f$ witnessing that $[A,A] \cap U_n = \emptyset$ for all $n \in \w$. 

Otherwise, if $f(A) \not\subseteq A$, then $f^{-1}(B) \cap A \neq \emptyset$. Note that by continuity of $f$, the set $f^{-1}(B)$ is clopen. If for some $n \in \w$ we have $A'_n=f^{-1}(B) \cap A_n \neq \emptyset$ then $[A'_n,B]$ is a neighbourhood of $f$ witnessing that $[A',B] \cap U_m = \emptyset$ for all $m \in \w \setminus \singleton{n}$. And finally, if $f^{-1}(B) \cap A_n = \emptyset$ for all $n \in \w$ then $[f^{-1}(B) \cap A,B]$ is a neighbourhood of $f$ such that $[f^{-1}(B) \cap A,B] \cap U_n = \emptyset$ for all $n \in \w$. 
\end{proof}

In fact, if $X$ contains a family of disjoint open sets of size $\kappa$, an easy modification shows that under the above conditions, $\CkX$ contains a locally finite collection of disjoint open sets of size $\kappa$.

\begin{mythm}
\label{notcountcpt}
The function space $\CkX$ of an infinite compact zero-dimensional Hausdorff space $X$ is not pseudocompact.
\end{mythm}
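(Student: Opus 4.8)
The plan is to deduce this immediately from the preceding Observation together with a standard characterisation of pseudocompactness. Recall that a Tychonoff space is pseudocompact if and only if every locally finite family of non-empty open subsets is finite (see \cite[3.10.22]{Eng}). Since $\CkX$ is Tychonoff whenever $X$ is, and the previous Observation produces an infinite locally finite family of disjoint non-empty open sets in $\CkX$ (using precisely that $X$ is infinite, compact, zero-dimensional and Hausdorff), the hypothesis of this characterisation fails and the result follows at once. So the whole proof is essentially a one-line appeal to the work already done.

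If one prefers a self-contained argument exhibiting an unbounded continuous function directly, I would proceed as follows. Let $\script{U} = \set{U_n}:{n \in \w}$ be the infinite locally finite family of disjoint non-empty open sets constructed in the Observation. For each $n$ choose $g_n \in U_n$, and, using that $\CkX$ is Tychonoff, pick a continuous $h_n \colon \CkX \to [0,n]$ with $h_n(g_n) = n$ and $h_n$ vanishing outside $U_n$. Define $h = \sum_{n \in \w} h_n$. Local finiteness of $\script{U}$ guarantees that near any point all but finitely many summands vanish, so $h$ is a well-defined continuous real-valued function on $\CkX$; and since $h(g_n) \geq n$ for every $n$, it is unbounded. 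This witnesses directly that $\CkX$ is not pseudocompact.

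I do not expect any genuine obstacle here, as the conceptual difficulty was already absorbed into the Observation that supplied the locally finite family; the only point requiring care is the verification that the summed bump function is continuous, which is exactly what local finiteness delivers. Consequently I would present the short version as the actual proof, recording the characterisation of pseudocompactness and citing the Observation, and relegate the explicit construction of $h$ to a remark or omit it entirely.
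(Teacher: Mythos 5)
Your proposal is correct and its main argument coincides exactly with the paper's proof: both cite the characterisation of pseudocompactness via locally finite families \cite[3.10.22]{Eng} and apply it to the infinite locally finite family of disjoint non-empty open sets from the preceding Observation. The supplementary bump-function construction is a sound (and standard) unpacking of that characterisation, but it is not needed and the paper, like your short version, omits it.
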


\begin{proof}
By \cite[3.10.22]{Eng}, for a Tychonoff space $Y$, pseudocompactness is equivalent to the assertion that every locally finite family of non-empty open subsets of $Y$ is finite. Hence, $\CkX$ is not pseudocompact by the previous observation. 
\end{proof}

Since pseudocompactness is implied by (countable) compactness \cite[3.10.20]{Eng}, it follows that the function space $\CkX$ of an infinite compact zero-dimensional Hausdorff space $X$ is never (countably) compact.

\begin{mythm}
Every pseudocompact subspace of $\Cwstar$ has empty interior. 
\end{mythm}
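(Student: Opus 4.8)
The plan is to reduce the statement to a localised version of the observation preceding Theorem~\ref{notcountcpt}. By \cite[3.10.22]{Eng}, a Tychonoff space is pseudocompact if and only if every locally finite family of non-empty open subsets is finite. Since every subspace $Y$ of the Tychonoff space $\Cwstar$ is again Tychonoff, it suffices to prove the following: \emph{every non-empty open subset of $\Cwstar$ contains an infinite, locally finite (in $\Cwstar$) family of pairwise disjoint non-empty open subsets of $\Cwstar$.} Indeed, suppose $Y \subseteq \Cwstar$ is pseudocompact and, for contradiction, that $\interior{Y} \neq \emptyset$. Picking a non-empty open $W \subseteq \interior{Y} \subseteq Y$ and applying the localised statement yields a family $\set{V_k}:{k \in \w}$ with $V_k \subseteq W \subseteq Y$. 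Each $V_k$ is then open in $Y$, the family stays locally finite in $Y$ (a $\Cwstar$-neighbourhood meeting finitely many $V_k$ restricts to such a $Y$-neighbourhood), and it is infinite and consists of disjoint non-empty sets; so $Y$ is not pseudocompact, a contradiction.

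To prove the localised statement, Theorem~\ref{bigintersectionresult} lets me take $W = \Intersection_{i=0}^n \CompactOpen{A_i}{B_i}$ with the $A_i$ pairwise disjoint compact clopen. If all $A_i$ are empty then $W = \Cwstar$ and the observation preceding Theorem~\ref{notcountcpt} already applies, so I may assume $A_0 \neq \emptyset$; since $W \neq \emptyset$ this forces $B_0 \neq \emptyset$. As every non-empty clopen subset of $\wstar$ is homeomorphic to $\wstar$, the set $B_0$ splits as a disjoint union $B_0 = B_0' \cup B_0''$ of two non-empty clopen sets, and $A_0$ contains a disjoint family $\set{A_0^{(k)}}:{k \in \w}$ of non-empty clopen sets. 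I then set
\[ V_k = \CompactOpen{A_0^{(k)}}{B_0'} \cap \CompactOpen{A_0 \setminus A_0^{(k)}}{B_0''} \cap \Intersection_{i=1}^n \CompactOpen{A_i}{B_i}. \]
Since $B_0' \cup B_0'' \subseteq B_0$, every member of $V_k$ maps $A_0$ into $B_0$, so $V_k \subseteq W$.

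The disjointness and non-emptiness of the $V_k$ are straightforward: if $f \in V_k \cap V_j$ with $k \neq j$ then $f\p{A_0^{(k)}} \subseteq B_0' \cap B_0'' = \emptyset$, which is impossible; and a witness $f \in V_k$ is obtained by mapping each clopen piece $A_0^{(k)}, A_0 \setminus A_0^{(k)}, A_1, \dots, A_n$ continuously onto the corresponding target clopen set (each such set being a copy of $\wstar$) and gluing along the clopen partition. The main work is the local finiteness of $\set{V_k}:{k \in \w}$, which I expect to be the principal obstacle, and which I would establish as in the observation above by tracking the clopen set $T = \preimage{B_0'}{f} \cap A_0$. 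The key point is that $f \in V_k$ forces $f(A_0) \subseteq B_0' \cup B_0''$ together with $T = A_0^{(k)}$. Given an arbitrary $f \in \Cwstar$, I distinguish the cases: if $f(A_0) \not\subseteq B_0' \cup B_0''$, I pick $x \in A_0$ with $f(x) \notin B_0' \cup B_0''$ and observe that $\CompactOpen{\singleton{x}}{\wstar \setminus (B_0' \cup B_0'')}$ is a neighbourhood of $f$ meeting no $V_k$; otherwise $f(A_0) \subseteq B_0' \cup B_0''$, and $N = \CompactOpen{T}{B_0'} \cap \CompactOpen{A_0 \setminus T}{B_0''}$ is a neighbourhood of $f$ in which every $g$ satisfies $\preimage{B_0'}{g} \cap A_0 = T$, so $N$ meets $V_k$ only when $T = A_0^{(k)}$, which holds for at most one $k$. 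In each case $f$ has a neighbourhood meeting at most one $V_k$, so the family is locally finite, completing the argument.
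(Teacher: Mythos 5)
Your proof is correct, and it takes a genuinely different route from the paper's. The paper's own argument is a two-line reduction: by Theorem~\ref{bigintersectionresult} every non-empty open set contains a basic clopen set; such a set is homeomorphic to a finite product of copies of $\Cwstar$ (functions on a clopen partition of $\wstar$ correspond to tuples of functions on the pieces, and every non-empty clopen subset of $\wstar$ is a copy of $\wstar$); this product is not pseudocompact by Theorem~\ref{notcountcpt}; and pseudocompactness is inherited by clopen subspaces. You bypass both the product identification and the heredity step: you localise the construction from the observation preceding Theorem~\ref{notcountcpt} into an arbitrary basic set $\Intersection_{i=0}^{n}\CompactOpen{A_i}{B_i}$, splitting the \emph{target} $B_0$ into two non-empty clopen halves rather than splitting the domain as that observation does, and you verify local finiteness in all of $\Cwstar$ by tracking the clopen trace $T=\preimage{B_0'}{f}\cap A_0$. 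Your case analysis is sound --- note that the degenerate values $T=\emptyset$ and $T=A_0$ are absorbed by your ``at most one $k$'' clause, since $T$ then equals no $A_0^{(k)}$ --- the reduction through \cite[3.10.22]{Eng} applied to the Tychonoff subspace $Y$ is valid, and non-emptiness of each $V_k$ is immediate (piecewise constant maps on the clopen partition already suffice; you do not need maps \emph{onto} the targets). What each approach buys: the paper's route is shorter but leans on the unproved identification of basic clopen sets with finite products of copies of $\Cwstar$, plus the fact that a product with a non-pseudocompact factor is non-pseudocompact; yours is longer but self-contained modulo Theorem~\ref{bigintersectionresult}, and it establishes something slightly stronger, namely that every non-empty open subset of $\Cwstar$ contains an infinite disjoint family of non-empty open sets that is locally finite in the whole space --- an argument that transfers verbatim to $\CkX$ for any compact zero-dimensional Hausdorff $X$ without isolated points.
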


\begin{proof}
Note that as a consequence of Theorem~\ref{bigintersectionresult} every open set contains a basic clopen subset homeomorphic to a finite product of $\Cwstar$, which is not pseudocompact by the previous corollary. Hence, the original set could not have been pseudocompact, as pseudocompactness is hereditary with respect to clopen subspaces.
\end{proof}

\section{\texorpdfstring{Completeness properties of $\Cwstar$}{Completeness properties of function spaces}}

Establishing completeness properties of function spaces $C_k(X,\R)$ of real-valued functions is a natural but hard problem. Indeed, no complete characterisation is known for which spaces $X$ the function space  $C_k(X,\R)$ is Baire. For a characterisation when $C_k(X,\R)$ is Baire for locally compact or first countable spaces $X$ see \cite{GM}. Completeness results have also been established for other target spaces: If $X$ is a hemi-compact $k$-space, and $Y$ \v{C}ech-complete with a $G_\delta$ diagonal then $C_k(X,Y)$ is \v{C}ech-complete \cite[4.1]{Hola}.

We have already seen that $\Cwstar$ does not exhibit many of the most common compactness properties. In this section however, we show that $\Cwstar$ is Baire. In fact, we establish something slightly stronger, namely that $\Cwstar$ is (strongly) Choquet. 

Recall that the property of being Baire can be described using the Choquet game. The Choquet game is an infinite ($\omega$)-length game with two players, called E and NE. The players take turns to choose non-empty open sets, with the condition that if one player chooses an open set $U$, then the other player on their subsequent turn must choose a non-empty open set $V$ with $V \subseteq U$. Player E begins. Plays of the game therefore form descending $\omega$-length chains of open sets, of the shape $\tuple{U_1, V_1, U_2, V_2, \dots}$, where the $U_i$ correspond to E's moves, and the $V_i$ correspond to NE's moves. Player E wins if the resulting intersection, $\Intersection_{i \in \omega} U_i = \Intersection_{i \in \omega}V_i$, is empty. Otherwise, player NE wins. By a theorem of Oxtoby, the space $X$ is Baire if and only if E has no winning strategy (\cite[I.8.11]{Kechris}). If NE has a winning strategy, then $X$ is said to be Choquet (\cite[I.8.12]{Kechris}). Clearly if $X$ is Choquet, then $X$ is also Baire.

It will be convenient for the second part of this section to formalise precisely what is meant by a strategy. The following approach is taken from \cite{Kechris}. Observe that partial plays of the Choquet game are finite descending sequences of non-empty open sets that can be given the structure of a tree, where we say $s \leq t$ precisely when $t$ extends $s$, that is $s \subseteq t$. A strategy $\sigma$ is then just a special kind of subtree (we demand a subtree be closed under taking initial segments, as in \cite{Kechris}). More precisely, a tree $\sigma$ is a strategy (for NE) if and only if
\begin{enumerate}
\item $\emptyset \in \sigma$;
\item if $\tuple{U_0, \dots, U_n, V_n} \in \sigma$ then for any non-empty open subset $U_{n+1}$ of $V_n$, $\tuple{U_0, \dots, U_n, V_n, U_{n+1}} \in \sigma$;
\item if $\tuple{U_0, \dots, U_n} \in \sigma$, then there is precisely one non-empty open set $V_n \subseteq U_n$ with $\tuple{U_0, \dots, U_n, V_n} \in \sigma$; we write $V_n = \sigma \of{U_0, \dots, U_n}$.
\end{enumerate}
Observe that the branches of a strategy $\sigma$ correspond to plays of the Choquet game where NE has adhered to the strategy $\sigma$. We call such plays $\sigma$-compatible. A strategy is therefore a winning strategy precisely when all its branches are winning plays.

This formalised notion of a strategy is useful when we need to be careful about how we construct our strategy, and we will use this description explicitly later. However, for the first results in this section, one can think of a strategy like a function, which takes the history of the game played so far, and provides the next move for the player. Clearly such a description can be formalised as above. In fact, we will simply describe how NE should respond to the history of the game, since clause (3) in the definition above is the only clause we might have control over.

In fact, when we describe NE's moves, it will often only depend on E's previous move. A strategy (for NE) which is not dependent on the entire history of the game, but only the previous move of E, is called a $1$-tactic. If NE has a winning $1$-tactic in the Choquet game on $X$, then clearly NE has a winning startegy, but the converse is not true in general (see, for example, \cite{Debs}).

The strong Choquet game is a variant of the Choquet game, where E may specify on each of their turns a point inside the open set that they play, and NE must then respond with an open subset containing this point. The winning condition is the same. Strategies can be defined analogously to the Choquet game, as can $1$-tactics. A space $X$ is strongly Choquet if and only if NE has a winning strategy in the strong Choquet game on $X$. Every strongly Choquet space is Choquet.

We will later show that the subspace of Stone-\v{C}ech extensions of finite-to-one maps is Choquet. Let us begin with a result showing that we can tailor injective maps $\w \to \w$ such that their Stone-\v{C}ech extensions satisfy countably many conditions imposed by the compact-open topology. To do so, we adapt the notion of Cantor schemes and Lusin schemes used in \cite{Kechris}. For a tree $\pair{T, \leq}$ of height $\omega$ write $T_n= \set{t \in T}:{\textrm{height}(t) = n}$ for $n \in \w$, and denote the set of successors of an element $t \in T_n$ by $\successor{t}=\set{s \in T_{n +1}}:{t \leq s}$. We call $T$ a \emph{finite splitting tree} if $0 < \cardinality{\successor{t}} < \infty$ for all $t \in T$. 

\begin{mydef}
Let $T$ be a tree of height $\w$. A collection $\set{A_t}:{t \in T}$ of non-empty clopen subsets of a space $X$ is called a $T$-\emph{scheme} (in $X$) if
\begin{enumerate}
\item $A_t \cap A_s = \emptyset$ for all $n \in \w$ and $t,s \in T_n$ with $t \neq s$, and 
\item $\bigcup_{s \in \successor{t}} A_s \subseteq A_t$ for all $t \in T$.
\end{enumerate}
If in addition, a $T$-scheme $\set{A_t}:{t \in T}$ also satisfies 
\begin{enumerate}
\setcounter{enumi}{2}
\item $\bigcup_{t \in T_n} A_t = X$ for all $n \in \w$,
\end{enumerate}
we refer to it as a \emph{covering} $T$-\emph{scheme}. Lastly, if the collection $\set{A_t}:{t \in T}$ only satisfies $(2)$, we refer to it as a \emph{weak} $T$-\emph{scheme}. 
\end{mydef}

Under this notation, a Cantor scheme is a $2^{<\w}$-scheme, and a Lusin scheme is a $\w^{<\w}$-scheme. 

For the next result, since when discussing Stone-\v{C}ech extensions of injective maps $\w \to \w$ we are primarily interested in the restriction of said maps to $\remainder{\omega}$, let us for notational convenience cease to distinguish between $\SC{\phi}$ and $\restrict{\SC{\phi}}{\remainder{\omega}}$.

\begin{mylem}
\label{lem:recursiveconstr}
Let $\pair{T, \leq}$ be a finite splitting tree of height $\omega$ and suppose that $\set{A_t}:{t \in T}$ is a covering $T$-scheme and $\set{B_t}:{t \in T}$ is a weak $T$-scheme in $\wstar$.  Then there is an injective map $\phi \colon \w \to \w$ such that its Stone-\v{C}ech extension $\beta \phi$ satisfies $\beta \phi(A_t)\subseteq B_t$ for all $t \in T$. 
\end{mylem}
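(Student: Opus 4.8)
The plan is to translate this topological statement into a purely combinatorial construction on $\w$ and then build $\phi$ by a greedy recursion. The bridge is the standard dictionary between clopen subsets of $\wstar$ and subsets of $\w$ modulo finite: every non-empty clopen subset of $\wstar$ has the form $S^* = \closure{S} \cap \wstar$ for some infinite $S \subseteq \w$, and $S^*$ is empty exactly when $S$ is finite. The crucial fact I would record first is that for $S,R \subseteq \w$ and any $\phi \colon \w \to \w$,
\[ \beta\phi(S^*) \subseteq R^* \iff \phi(S) \subseteq^* R, \]
where $\subseteq^*$ denotes inclusion modulo a finite set. I only need the easy direction: if $\phi(S) \setminus R$ is finite and $p \in S^*$, then $S \in p$, so all but finitely many members of $S$ lie in $\phi^{-1}(R)$, whence $\phi^{-1}(R) \in p$, so $R \in \beta\phi(p)$ and $\beta\phi(p) \in R^*$. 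Writing $A_t = C_t^*$ and $B_t = D_t^*$, the goal $\beta\phi(A_t) \subseteq B_t$ thereby reduces to arranging $\phi(C_t) \subseteq^* D_t$ for every $t \in T$.

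Next I would tidy the two schemes into well-behaved representatives in $\w$. Because $\{A_t\}$ is a covering $T$-scheme, at each level $n$ the clopen sets $\{A_t : t \in T_n\}$ partition $\wstar$, and since successors lie inside their parent, $\bigcup_{s \in \successor{t}} A_s = A_t$ exactly. A routine recursion over the levels (each finite, by finite splitting), adjusting representatives on finite sets, produces sets $C_t \subseteq \w$ with $C_t^* = A_t$ such that $\{C_t : t \in T_n\}$ genuinely partitions $\w$ for every $n$ and $\bigcup_{s \in \successor{t}} C_s = C_t$ exactly. In the same spirit, since the weak scheme gives $B_s \subseteq B_t$ for each successor $s$, a recursion down the tree (replacing $D_s$ by $D_s \cap D_{\textrm{parent}(s)}$) yields representatives $D_t \subseteq \w$ with $D_t^* = B_t$ that are \emph{exactly} nested, $D_s \subseteq D_t$ whenever $s \in \successor{t}$; note that each $D_t$ is infinite. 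The exact partition property means every $k \in \w$ determines a unique infinite branch $t^k_0 \leq t^k_1 \leq \cdots$ of $T$, characterised by $k \in C_{t^k_n}$ for all $n$.

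I would then construct $\phi$ recursively in $k = 0,1,2,\dots$. At stage $k$, having defined $\phi$ on $\{0,\dots,k-1\}$, I choose $\phi(k)$ to be any element of the infinite set $D_{t^k_k}$ not yet used; this is always possible and makes $\phi$ injective. For the verification, fix a node $t \in T_n$. Every $k \in C_t$ with $k \geq n$ satisfies $t^k_n = t$ and, by exact nesting, $\phi(k) \in D_{t^k_k} \subseteq D_{t^k_n} = D_t$; the only possible exceptions are the finitely many $k \in C_t$ with $k < n$. Hence $\phi(C_t) \subseteq^* D_t$ for every $t$, which by the first paragraph yields $\beta\phi(A_t) \subseteq B_t$, as required.

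The step I expect to carry the weight is the simultaneous handling of the infinitely many mod-finite conditions $\phi(C_t) \subseteq^* D_t$ together with injectivity. The resolution is that each condition forbids only finitely many exceptions, so by sending $k$ deeper and deeper down its branch (here into a node at depth $k$) one guarantees that any fixed node collects only boundedly many violators, while the infinitude of every $D_t$ leaves room to keep choosing fresh values and preserve injectivity. The preparatory cleanup of the schemes---in particular forcing \emph{exact} nesting of the $D_t$---is precisely what lets the single membership $\phi(k) \in D_{t^k_k}$ discharge all the shallower conditions along the branch at once.
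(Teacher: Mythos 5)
Your proof is correct and takes essentially the same approach as the paper's: both first replace the schemes by exact representatives in $\w$ (the paper cites \cite[3.6.A]{Eng} where you carry out the finite adjustments by hand), and both then define $\phi(k)$ greedily as a fresh element of $D_t$ for $t$ the level-$k$ node of the branch determined by $k$, so that each node collects only finitely many exceptional values. Your explicit $\subseteq^*$ bridge is exactly the paper's closing observation that $\phi(C_t)$ is almost contained in $D_t$, so nothing further is needed.
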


\begin{proof}
Using \cite[3.6.A]{Eng}, fix collections $\set{C_t}:{t \in T}$ and $\set{D_t}:{t \in T}$ of (clopen) subsets of $\w$ such that $\set{C_t}:{t \in T}$ is a covering $T$-scheme in $\w$, $\set{D_t}:{t \in T}$ is a weak $T$-scheme in $\w$, and $C^*_t (= \closure{C}_t \setminus C_t) = A_t$ and $D^*_t = B_t$ for all $t \in T$.

We construct an injective function $\phi \colon \w \to \w$ such that, for each $m \in \omega$ and $t \in T_m$,
$$ (\star) \quad \phi\restriction_{n+1}\p{C_t \cap [m,n]} \subseteq D_t \textnormal{ for all } n \in \omega \textnormal{ with }m \leq n.$$
In other words, $\phi\restriction_n$ promises to send $C_t$ to $D_t$ whenever $\operator{height}\of{t} < n$. Since $\phi$ is injective it extends to a continuous self-map of $\wstar$ \cite[3.7.16]{Eng}, and satisfies $\beta \phi(A_t) \subseteq B_t$ for all $t \in T$, because by $(\star)$ the set $\phi(C_t)$ is almost contained in $D_t$, i.e.\ $\cardinality{\phi(C_t) \setminus D_t } < \infty$. 

Since $\set{C_t}:{t \in T}$ is a covering scheme, for every $n \in \w$ the set $\set{t \in T_n}:{n \in C_t}$ contains a unique element $t_n$. We define $\phi \colon \w \to \w$ recursively by
$$\phi\colon n \mapsto \min \p{D_{t_n} \setminus \textnormal{ran}\p{\phi\restriction_n}}.$$ 
Since $D^*_t = B_t \neq \emptyset$ for all $t \in T$, every $D_t$ is infinite and hence $\phi$ is well-defined, and injective. To see that condition $(\star)$ is satisfied, we proceed by induction. Suppose the statement holds for $\phi\restriction_{n}$. Let $m \leq n$ and consider some $C_t$ for $t \in T_m$. If $n \notin C_t$ then 
$$\phi\restriction_{n+1}\p{C_t \cap [m,n]} = \phi\restriction_{n+1}\p{C_t \cap [m,n-1]} = \phi\restriction_{n}\p{C_t \cap [m,n-1]} \subseteq D_t$$ by induction assumption. And if $n \in C_t$, we have $C_{t_n} \subseteq C_t$ by properties $(1)$ and $(2)$ of schemes, which in turn implies $D_{t_n} \subseteq D_t$.
Hence  
$$\phi\restriction_{n+1}\p{C_t \cap [m,n]} = \phi\restriction_{n}\p{C_t \cap [m,n-1]} \cup \phi(n) \subseteq D_t \cup D_{t_n} \subseteq D_t.$$
This completes the inductive step and proof.
\end{proof}

\begin{mycor}
\label{thm:FT1dense}
The set $\set{\SC{\phi}}:{\function{\phi}{\omega}{\omega} \textrm{ is injective}}$ is dense in $C_k\of{\remainder{\omega}, \remainder{\omega}}$. \qed
\end{mycor}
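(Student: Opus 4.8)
The plan is to use Theorem~\ref{bigintersectionresult} to reduce density to a statement about basic open sets, and then to encode the finitely many constraints defining such a basic set into a pair of schemes to which Lemma~\ref{lem:recursiveconstr} applies. First I would fix a non-empty basic open set $U = \Intersection_{i=0}^n \CompactOpen{A_i}{B_i}$, with the $A_i$ compact clopen and pairwise disjoint; since density just means every such $U$ meets the set in question, it suffices to exhibit an injective $\phi$ with $\SC{\phi} \in U$, that is, $\SC{\phi}(A_i) \subseteq B_i$ for every $i$. Before building the schemes I would clean up the data: any index with $A_i = \emptyset$ imposes no condition (every map sends $\emptyset$ into $B_i$), so I discard it; and since $U \neq \emptyset$, fixing some $f \in U$ gives $f(A_i) \subseteq B_i$, whence each surviving $B_i$ is non-empty.

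Next I would manufacture a finite splitting tree $T$ of height $\omega$ carrying the two schemes. Put $A_{n+1} = \wstar \setminus \Union_{i=0}^n A_i$, so that, discarding it if it is empty, the collection $\{A_0, \dots, A_{n+1}\}$ is a clopen partition of $\wstar$ into non-empty pieces. I let the root $r$ have $A_r = B_r = \wstar$, give it the partition pieces as its immediate successors with prescribed $B$-labels $B_0, \dots, B_n$ (and $B_{n+1} = \wstar$ for the complement piece), and from level two onwards let every node have a single successor that copies both its $A$- and $B$-label. Then $T$ is finite splitting of height $\omega$; the family $\{A_t\}$ is a covering $T$-scheme, since disjointness and the covering condition hold at every level because we are merely repeating a fixed partition, while inclusion $(2)$ is trivial at the copying successors and holds at the root by construction; and $\{B_t\}$ is a weak $T$-scheme of non-empty clopen sets, the only non-trivial instance of $(2)$ being at the root, where it reads $\Union_{i} B_i \cup \wstar \subseteq \wstar$.

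Applying Lemma~\ref{lem:recursiveconstr} to these schemes then yields an injective $\phi \colon \omega \to \omega$ with $\SC{\phi}(A_t) \subseteq B_t$ for all $t \in T$; reading this off at the level-one nodes gives $\SC{\phi}(A_i) \subseteq B_i$ for $i = 0, \dots, n$, so $\SC{\phi} \in U$, as required. I expect the only real work to be the bookkeeping of the middle paragraph---checking that the stacked partition genuinely is a \emph{covering} scheme and that the chosen $B$-labels keep every set non-empty---since the reductions are routine and the existence of $\phi$ is handed to us by the Lemma. The one degenerate case to keep an eye on is when the $A_i$ already cover $\wstar$: then $A_{n+1} = \emptyset$ and is simply omitted, which is harmless because the remaining pieces still partition the space.
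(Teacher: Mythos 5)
Your proposal is correct and is exactly the argument the paper intends: the corollary is stated with a bare \qed after Lemma~\ref{lem:recursiveconstr}, the implicit proof being precisely your reduction via the basis of Theorem~\ref{bigintersectionresult} followed by packaging the finitely many constraints (padded with the complementary piece $A_{n+1}$ labelled by $\wstar$) into a covering $T$-scheme and a weak $T$-scheme that are constant from level one on. Your bookkeeping---discarding empty $A_i$, noting non-emptiness of the surviving $B_i$ from a witness $f\in U$, and handling the degenerate case $A_{n+1}=\emptyset$---fills in exactly the details the paper leaves to the reader.
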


The density result was first proved by an alternative method in \cite{LuptonThesis}.

Let us now consider the basis $\script{B}$ for $\Cwstar$ as described in Theorem~\ref{bigintersectionresult}, consisting of sets of the form
$$U = \Intersection_{i=0}^n \CompactOpen{A_i}{B_i} $$
for $A_i,B_i$ clopen subsets of $\wstar$ for all $0 \leq i \leq n$ and $\explicitSet{A_0, \dots, A_n}$ a clopen partition of $\wstar$.

\begin{mylem}
\label{fairlyclearlemma}
Let $\script{B}$ be the base for $\Cwstar$ described above, and suppose $V= \Intersection_{i=0}^k\CompactOpen{C_i}{D_i} \in \script{B}$. Then every $U \in \script{B}$ with $U \subseteq V$ can be written as  
$U = \Intersection_{i=0}^j\CompactOpen{A_i}{B_i} $ such that $\explicitSet{A_0, \dots, A_k}$ forms a clopen partition of $\wstar$, and $\explicitSet{A_0, \dots, A_j}$ refines $\explicitSet{C_0, \dots, C_k}$, while $\explicitSet{B_0, \dots, B_j}$ refines $\explicitSet{D_0, \dots, D_k}$ in such a way that $A_l \subset C_m$ implies $B_l \subset D_m$.
\end{mylem}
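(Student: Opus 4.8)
The plan is to begin with an arbitrary representation of $U$ as a member of $\script{B}$, say $U = \Intersection_{i=0}^{j'} \CompactOpen{A'_i}{B'_i}$ with $\explicitSet{A'_0, \dots, A'_{j'}}$ a clopen partition of $\wstar$, and then to refine it against the partition $\explicitSet{C_0, \dots, C_k}$ coming from $V$. (I read the first index in the displayed conclusion as a typo: the domain pieces of the rewritten $U$ should form a partition indexed up to $j$, so that the statement is internally consistent with the subsequent clauses.) The first move is to split each domain piece. Since $\explicitSet{C_0, \dots, C_k}$ covers $\wstar$, each $A'_i$ decomposes as the disjoint union $A'_i = \Union_{m=0}^k (A'_i \intersection C_m)$, and the elementary identity $\CompactOpen{P \union Q}{B} = \CompactOpen{P}{B} \intersection \CompactOpen{Q}{B}$ yields $\CompactOpen{A'_i}{B'_i} = \Intersection_{m=0}^k \CompactOpen{A'_i \intersection C_m}{B'_i}$. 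Intersecting over $i$ gives $U = \Intersection_{i,m} \CompactOpen{A'_i \intersection C_m}{B'_i}$, where the non-empty clopen sets $A'_i \intersection C_m$, being a common refinement of two partitions, again form a clopen partition of $\wstar$.

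The single place where the hypothesis $U \subseteq V$ is needed is the tightening of the image sets, namely the identity $U = \Intersection_{i,m} \CompactOpen{A'_i \intersection C_m}{B'_i \intersection D_m}$. One inclusion is automatic: shrinking a target from $B'_i$ to $B'_i \intersection D_m$ shrinks each $\CompactOpen{\cdot}{\cdot}$, so the right-hand side is contained in $U$. For the reverse inclusion I would take $f \in U$; then $f \in V$ by assumption, so $f(C_m) \subseteq D_m$ for every $m$, while $f(A'_i) \subseteq B'_i$ for every $i$. Hence $f(A'_i \intersection C_m) \subseteq B'_i \intersection D_m$, placing $f$ in the right-hand side.

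It then remains to relabel and verify the four clauses. Enumerating the non-empty pieces as $A_0, \dots, A_j$ with $A_l = A'_{i(l)} \intersection C_{m(l)}$ and setting $B_l = B'_{i(l)} \intersection D_{m(l)}$, I would check that $\explicitSet{A_0, \dots, A_j}$ is a clopen partition (discarding empty pieces), that $A_l \subseteq C_{m(l)}$ exhibits $\explicitSet{A_0, \dots, A_j}$ as a refinement of $\explicitSet{C_0, \dots, C_k}$, and that $B_l \subseteq D_{m(l)}$ does the same for the codomain pieces. The compatibility clause uses only that the $C_m$ are pairwise disjoint: if $A_l \subseteq C_m$ with $A_l \neq \emptyset$, then $m = m(l)$, whence $B_l = B'_{i(l)} \intersection D_{m(l)} \subseteq D_m$. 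I do not expect a genuine obstacle here; the argument is essentially bookkeeping, and the one conceptual point is recognizing that $U \subseteq V$ is exactly what licenses intersecting each target $B'_i$ with the matching $D_m$ without altering $U$. The only things to watch are the indexing typo and the discarding of empty refinement pieces so that the $A_l$ genuinely partition $\wstar$.
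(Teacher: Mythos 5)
Your proof is correct, but it reaches the conclusion by a different route than the paper's, so a comparison is worth recording. For the domain part you take the common refinement of the two clopen partitions in one step, via the identity $[P \cup Q, B] = [P,B] \cap [Q,B]$; the paper instead invokes Lemma~\ref{prettycoollemma} applied to $U = U \cap V$, which is the same bookkeeping packaged as an induction, so here the difference is cosmetic. The genuine divergence is in the codomain clause. The paper leaves the image sets of the refined representation unchanged and proves $A_l \subseteq C_m \Rightarrow B_l \subseteq D_m$ by contradiction: if there were $y_0 \in B_0 \setminus D_m$, the finite-range continuous function sending $A_0$ to $y_0$ and each other $A_i$ to some $y_i \in B_i$ would lie in $U \setminus V$. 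You instead shrink each target to $B'_i \cap D_m$ and verify directly that this does not change $U$, the nontrivial inclusion $f \in U \Rightarrow f(A'_i \cap C_m) \subseteq B'_i \cap D_m$ being exactly where $U \subseteq V$ enters. Your route is more elementary: it is pure set algebra on subbasic sets and constructs no witness functions, so in particular it never needs the $B_i$ to be non-empty (the paper's witness must pick a point from every $B_i$, tacitly using that members of $\script{B}$ are non-empty); moreover it automatically yields non-empty targets on non-empty domain pieces, since any $f \in U$ maps $A'_i \cap C_m$ into $B'_i \cap D_m$ --- which is what the $T$-scheme application in Theorem~\ref{thm:intersections} wants. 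What the paper's argument buys in exchange is the slightly stronger fact that the image sets already present after disjointifying refine the $D_m$'s without modification, and it exercises the finite-range witness technique that recurs elsewhere in the paper. Your two housekeeping points --- that the index $k$ in ``$\{A_0,\dots,A_k\}$ forms a clopen partition'' is a typo for $j$, and that disjointness of the $C_m$ together with $A_l \neq \emptyset$ forces $m = m(l)$ in the compatibility clause --- are both correct readings.
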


\begin{proof}
By applying Lemma~\ref{prettycoollemma} to the set $U= U \cap V$ it follows that without loss of generality, the collection $\explicitSet{A_0, \dots, A_j}$ is a partition of $\wstar$ refining $\explicitSet{C_0, \dots, C_k}$.

To see that $\explicitSet{B_0, \dots, B_j}$ refines $\explicitSet{D_0, \dots, D_l}$, consider say $B_0$. Then $A_0 \subseteq C_m$ for some $m \leq k$ by the previous part. If $B_0 \not\subseteq D_m$ fix $y_0 \in B_0 \setminus D_m$ and $y_i \in B_i$ for $1 \leq i \leq j$ and consider $f$, the continuous finite-range function sending $A_i \mapsto y_i$. Then, it is clear that $f \in U \setminus V$, a contradiction.
\end{proof}

\begin{mythm}
\label{thm:intersections}
Let $\script{B}$ be the base for $\Cwstar$ described above, and suppose $\set{U_n}:{n \in \w} \subset \script{B}$ is a nested collection of basic open sets, i.e.\ $U_{n+1} \subset U_n$ for all $n \in \w$. Then $\bigcap_{n \in \w} U_n$ contains the Stone-\v{C}ech extension of an injection $\omega \to \omega$, and in particular is non-empty. 
\end{mythm}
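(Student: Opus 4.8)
The plan is to recognise the nested sequence $\set{U_n}:{n \in \w}$ as encoding a tree of refining clopen partitions carrying compatible targets, and then to feed exactly this data into Lemma~\ref{lem:recursiveconstr}. Write each $U_n$ in a representation $U_n = \Intersection_i \CompactOpen{A_i^n}{B_i^n}$, where $\explicitSet{A_i^n}$ is a clopen partition of $\wstar$. Since each $U_n$ is non-empty and every cell $A_i^n$ is non-empty, any witness $f \in U_n$ has $\emptyset \neq f(A_i^n) \subseteq B_i^n$, so every target $B_i^n$ is non-empty as well; discarding any empty cells, we may assume all partition cells are non-empty too.

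First I would arrange the partitions to refine one another coherently. Fixing a representation of $U_0$, I apply Lemma~\ref{fairlyclearlemma} recursively: at stage $n$, taking $V=U_n$ (in its representation already fixed at the previous stage) and $U=U_{n+1}$, I replace the representation of $U_{n+1}$ by one in which $\explicitSet{A_i^{n+1}}$ refines $\explicitSet{A_j^n}$ and the targets are compatible, i.e.\ $A_i^{n+1} \subseteq A_j^n$ implies $B_i^{n+1} \subseteq B_j^n$. This produces an $\w$-chain of refining clopen partitions whose attached targets shrink along refinements.

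From this data I build the tree $T$: its level-$n$ nodes are the cells of the $n$-th partition, and $s \in T_{n+1}$ is declared a successor of $t \in T_n$ precisely when $A_s \subseteq A_t$. As each partition is finite and every non-empty cell of the $n$-th partition is covered by cells of the finer $(n+1)$-st one, every node has at least one and only finitely many successors, so $T$ is a finite splitting tree of height $\w$. Attaching to each node $t$ its cell $A_t$ and target $B_t$, the family $\set{A_t}:{t \in T}$ is a covering $T$-scheme (disjointness and covering because each level is a partition, nesting because of refinement), while $\set{B_t}:{t \in T}$ is a weak $T$-scheme, the nesting $\Union_{s \in \successor{t}} B_s \subseteq B_t$ being exactly the target compatibility. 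Lemma~\ref{lem:recursiveconstr} then yields an injection $\phi \colon \w \to \w$ with $\beta\phi(A_t) \subseteq B_t$ for all $t \in T$, whence $\beta\phi \in \CompactOpen{A_i^n}{B_i^n}$ for all $i$ and $n$, so $\beta\phi \in \Intersection_{n \in \w} U_n$, as required.

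The only genuine work lies in the coherent rewriting of the second step: Lemma~\ref{fairlyclearlemma} only compares $U_{n+1}$ against a single fixed representation of $U_n$, so one must run the argument as an honest recursion, committing to the representation of $U_n$ before rewriting $U_{n+1}$, in order that the partitions and their targets line up simultaneously across all levels into one tree. Once that bookkeeping is in place, checking the two scheme conditions is routine and the theorem follows directly from Lemma~\ref{lem:recursiveconstr}.
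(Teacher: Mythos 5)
Your proposal is correct and is essentially the paper's own proof: the paper likewise applies Lemma~\ref{fairlyclearlemma} to rewrite the nested $U_n$ so that the cells $\set{A^n_i}:{n\in\w,\, i\leq j_n}$ and targets $\set{B^n_i}:{n\in\w,\, i\leq j_n}$ form a covering $T$-scheme and a weak $T$-scheme for a finite splitting tree $T$, and then invokes Lemma~\ref{lem:recursiveconstr}. The recursive bookkeeping you flag (fixing the representation of $U_n$ before rewriting $U_{n+1}$) is exactly what the paper leaves implicit, and your verification of the scheme conditions, including non-emptiness of the targets, is sound.
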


\begin{proof}
Using Lemma~\ref{fairlyclearlemma}, we can write
$$U_n = \Intersection_{i=0}^{j_n}\CompactOpen{A^n_i}{B^n_i} $$
such that for the appropriate finite splitting tree $T$, the collections 
$$\set{A^n_i}:{n \in \w, i \leq j_n} \; \textnormal{ and } \; \set{B^n_i}:{n \in \w, i \leq j_n}$$ form a covering $T$-scheme and a weak $T$-scheme in $\wstar$ respectively. 

By Lemma~\ref{lem:recursiveconstr} there is an injective map $\phi \colon \w \to \w$ such that its Stone-\v{C}ech extension satisfies $\beta\phi (A^n_i) \subseteq B^n_i$ for all $n \in \w$ and $0 \leq i \leq j_n$. It follows that $\beta \phi \in U_n$ for all $n \in \w$ as desired.
\end{proof}

\begin{mycor}
$\Cwstar$ is strongly Choquet. Moreover, NE has a winning $1$-tactic for the strong Choquet game, and a winning $1$-tactic in the Choquet game, and in both cases can always obtain a Stone-\v{C}ech extension of an injection $\omega \to \omega$ in the winning set.
\end{mycor}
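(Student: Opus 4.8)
The plan is to have NE play entirely within the base $\script{B}$ from Theorem~\ref{bigintersectionresult} and to read off the winning condition directly from Theorem~\ref{thm:intersections}. First I would fix a well-ordering $\prec$ of $\script{B}$ (legitimate since $\script{B}$ is a set); this serves only to turn NE's choices into deterministic ones, as required by clause (3) in the definition of a strategy. I then define NE's response as follows. In the Choquet game, when E has just played a non-empty open set $U$, NE answers with the $\prec$-least $V \in \script{B}$ satisfying $V \subseteq U$; in the strong Choquet game, when E plays a pair $(U,x)$ with $x \in U$, NE answers with the $\prec$-least $V \in \script{B}$ satisfying $x \in V \subseteq U$. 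Because $\script{B}$ is a base for $\Cwstar$, such a $V$ exists in both cases, so NE's response is well-defined; and since it depends only on E's most recent move, it is a $1$-tactic.

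Next I would verify that this $1$-tactic is winning. Consider any compatible play $\tuple{U_1, V_1, U_2, V_2, \dots}$ in which NE follows the tactic. By construction each $V_n \in \script{B}$, and since the moves descend we have $V_{n+1} \subseteq U_{n+1} \subseteq V_n$, so $\set{V_n}:{n \in \w}$ is a nested collection of basic open sets. Theorem~\ref{thm:intersections} then applies verbatim and produces an injection $\phi \colon \w \to \w$ whose Stone-\v{C}ech extension $\beta\phi$ lies in $\Intersection_{n \in \w} V_n$. As $\Intersection_{n \in \w} U_n = \Intersection_{n \in \w} V_n$, this intersection is non-empty, so NE wins the play, and moreover $\beta\phi$ witnesses the final clause of the statement. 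Since the play was arbitrary, NE's $1$-tactic is winning in both games; in particular $\Cwstar$ is strongly Choquet, and hence Choquet and Baire.

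The substance of the corollary sits almost entirely in Theorem~\ref{thm:intersections}: once a nested sequence of basic sets is known to have non-empty intersection containing a $\beta\phi$, the game-theoretic packaging is routine. The only points requiring a little care are, first, that NE must be able to stay inside $\script{B}$ while respecting the marked point in the strong game---this is exactly the statement that $\script{B}$ is a base, so for every $x \in U$ there is a basic $V$ with $x \in V \subseteq U$---and second, that the deterministic choice afforded by $\prec$ makes NE's response genuinely a function of E's last move alone, so that the winning strategy is in fact a $1$-tactic rather than merely a strategy. I expect no real obstacle beyond this bookkeeping, since the recursive construction underlying Theorem~\ref{thm:intersections} has already done the hard work.
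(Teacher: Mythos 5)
Your proposal is correct and follows essentially the same route as the paper: NE plays inside the base $\script{B}$ of Theorem~\ref{bigintersectionresult}, the resulting plays yield nested sequences of basic open sets, and Theorem~\ref{thm:intersections} supplies a Stone-\v{C}ech extension of an injection in the intersection. Your well-ordering of $\script{B}$ is just a formal tidying of the paper's ``responds with any basic open set'' to satisfy the determinism clause in the definition of a strategy, and changes nothing of substance.
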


\begin{proof}
The winning 1-tactic for player NE looks as follows. Whenever player E plays an open set $V_n$ and a point $x \in V_n$, player NE responds with any basic open set $U_n \in \script{B}$ such that $x \in U_n \subseteq V_n$. It follows from Theorem~\ref{thm:intersections} that $\bigcap_{n \in \w} U_n \neq \emptyset$.
\end{proof}

\begin{mycor}	\label{thm:CkBaire}
$\Cwstar$ is Choquet and Baire. \qed
\end{mycor}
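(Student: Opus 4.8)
The plan is to read this corollary off directly from the preceding result, together with the implications between the game-theoretic completeness notions recorded at the start of this section; no fresh combinatorics is required. First I would invoke the preceding corollary, which exhibits an explicit winning strategy for player NE in the Choquet game on $\Cwstar$ (in fact a winning $1$-tactic). By the very definition of a Choquet space---that NE has a winning strategy in the Choquet game---this immediately yields that $\Cwstar$ is Choquet. Alternatively, one could appeal to the fact, also noted in this section, that every strongly Choquet space is Choquet, and use the strong Choquet conclusion of the preceding corollary; either route gives the Choquet conclusion at once.

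Next I would pass from Choquet to Baire. The clean way is to apply Oxtoby's theorem, recalled above, which says that $X$ is Baire precisely when player E has \emph{no} winning strategy in the Choquet game on $X$. Since a single play of the Choquet game cannot be won by both players, the winning strategy for NE furnished by the preceding corollary precludes the existence of any winning strategy for E. Hence $\Cwstar$ is Baire. (Equivalently, one simply invokes the remark ``if $X$ is Choquet, then $X$ is Baire'' established earlier.)

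I expect no genuine obstacle here: all the substantive work---constructing the covering and weak $T$-schemes from a nested sequence of basic sets, and producing the injection $\phi$ whose Stone-\v{C}ech extension lands in the intersection---was already carried out in Theorem~\ref{thm:intersections} and the preceding corollary. This corollary merely records how the standard chain \emph{strongly Choquet} $\Rightarrow$ \emph{Choquet} $\Rightarrow$ \emph{Baire} specialises to $\Cwstar$, so the proof is a one-line deduction and the statement is correctly marked with \qed in the excerpt.
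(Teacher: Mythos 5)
Your proposal is correct and matches the paper exactly: the corollary carries a \qed precisely because it is the immediate specialisation of the chain \emph{strongly Choquet} $\Rightarrow$ \emph{Choquet} $\Rightarrow$ \emph{Baire} (the last step via Oxtoby's theorem, as you note) to the winning strategy for NE produced in the preceding corollary. Nothing further is needed.
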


With a little more analysis we can say more about the properties of Stone-\v{C}ech extensions of injections $\omega \to \omega$, and finite-to-one maps $\omega \to \omega$, as subspaces of $\Cwstar$. Let us write $\SCinjectives$ for the set of Stone-\v{C}ech extensions of injective maps $\omega \to \omega$, and recall that we denote the set of Stone-\v{C}ech extensions of finite-to-one maps $\omega \to \omega$ by $\SCfins$.

\begin{mydef}
Suppose $X$ is a (Choquet) space and $T$ a subset of $X$. We say NE has a winning strategy that targets $T$ (or more succinctly, $T$ is targetable in $X$) if and only if NE has a winning strategy $\sigma$ such that whenever $\p{U_0, V_0, U_1, \dots}$ is a $\sigma$-compatible play of the Choquet game on $X$, then
\[ \Intersection_{n \in \omega}U_n \intersection T \neq \emptyset. \]
\end{mydef}

Rephrasing the above we see that in the Choquet game on $\Cwstar$, player NE has a winning strategy that targets $\SCinjectives$. We now wish to prove a general theorem about such targetable subsets. To do this we introduce some machinery for building winning strategies. This machinery was first introduced in \cite{LuptonThesis}. Recall that strategies can be formally defined as trees; this formalism will be used in the following. Note that a finite sequence of length $n$ will be viewed as a function on the set $n = \explicitSet{0, \dots, n-1}$.

\begin{mydef}
Suppose $\sigma$ and $\mu$ are strategies for NE (E) in the Choquet game on $X$. A transfer map (from $\mu$ to $\sigma$) is a map $\function{\script{T}}{\mu}{\sigma}$ such that
\begin{enumerate}
\item $\p{s \subseteq t \rightarrow \script{T}\of{s} \subseteq \script{T}\of{t}}$
\item $\script{T}$ preserves length, i.e. for all $s \in \mu$, $\textrm{length}\of{s}=\textrm{length}\of{\script{T}\of{s}}$.
\end{enumerate}

If $\script{T}$ is a transfer map from $\mu$ to $\sigma$, then for $s$ a $\mu$-compatible sequence of open sets, we may (abusing notation) define $\script{T}\of{s}$ to be $\Union_{n \in \omega}\script{T}\of{\restrict{s}{n}}$ (where $n$-tuples here are thought of as partial functions on the domain $\omega$; this just gives us the obvious limit).
\end{mydef}

Transfer maps are useful because of the following observations, which will be used to check that new strategies that we build are winning.

\begin{myobs}
Suppose $\script{T}$ is a transfer map from $\mu$ to $\sigma$. If $s$ is $\mu$-compatible, then $\script{T}\of{s}$ is $\sigma$-compatible. Further, if $s$ is $\mu$-compatible, then for all $n\in\omega$ we have $\script{T}\of{\restrict{s}{n}} = \restrict{\script{T}\of{s}}{n}\subseteq \script{T}\of{s}$.
\end{myobs}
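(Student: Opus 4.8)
The plan is to unpack the formal, tree-theoretic definitions and verify each assertion by routine bookkeeping; the only real content is checking that the ``limit'' $\script{T}\of{s}$ is a genuine infinite sequence whose initial segments are exactly the $\script{T}\of{\restrict{s}{n}}$. I would prove the displayed equality first, since $\sigma$-compatibility then follows immediately.

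First I would record that, since $s$ is $\mu$-compatible, each initial segment $\restrict{s}{n}$ is a node of $\mu$, and $\restrict{s}{n} \subseteq \restrict{s}{m}$ whenever $n \leq m$. Monotonicity (clause~(1) of a transfer map) then gives $\script{T}\of{\restrict{s}{n}} \subseteq \script{T}\of{\restrict{s}{m}}$, so the finite sequences $\script{T}\of{\restrict{s}{n}}$ form a $\subseteq$-increasing chain. By length preservation (clause~(2)), $\script{T}\of{\restrict{s}{n}}$ is a function with domain $\explicitSet{0, \dots, n-1}$. Consequently their union $\script{T}\of{s} = \Union_{n \in \omega} \script{T}\of{\restrict{s}{n}}$ is a function with domain $\omega$, i.e.\ a genuine infinite sequence; this settles that $\script{T}\of{s}$ is well-defined.

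Next I would establish $\script{T}\of{\restrict{s}{n}} = \restrict{\script{T}\of{s}}{n}$. For any $m \geq n$, the inclusion $\script{T}\of{\restrict{s}{n}} \subseteq \script{T}\of{\restrict{s}{m}}$ together with the fact that the former has domain exactly $\explicitSet{0, \dots, n-1}$ shows $\restrict{\script{T}\of{\restrict{s}{m}}}{n} = \script{T}\of{\restrict{s}{n}}$. Restricting the union $\script{T}\of{s}$ to its first $n$ coordinates therefore recovers $\script{T}\of{\restrict{s}{n}}$, giving $\restrict{\script{T}\of{s}}{n} = \script{T}\of{\restrict{s}{n}}$, and the inclusion $\restrict{\script{T}\of{s}}{n} \subseteq \script{T}\of{s}$ is then immediate.

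Finally, for $\sigma$-compatibility of $\script{T}\of{s}$, I would use that every initial segment $\restrict{\script{T}\of{s}}{n}$ equals $\script{T}\of{\restrict{s}{n}}$, which lies in $\sigma$ because $\script{T}$ maps into $\sigma$. Thus all finite initial segments of $\script{T}\of{s}$ are nodes of $\sigma$; since $\sigma$ is closed under taking initial segments, the infinite sequence $\script{T}\of{s}$ is a branch of $\sigma$, i.e.\ a $\sigma$-compatible play. I expect no genuine obstacle: the single point needing care is the claim that the nested finite sequences glue to a well-defined function on all of $\omega$ and that restriction commutes with the union, which is precisely the content of the first two steps.
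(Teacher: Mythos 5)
Your proof is correct: the paper states this Observation without any proof, treating it as immediate from the definitions, and your argument is precisely the routine verification intended — the chain $\script{T}\of{\restrict{s}{n}}$ is increasing by monotonicity, has length $n$ by length preservation, so the union is a function on $\omega$ whose $n$-th restriction is $\script{T}\of{\restrict{s}{n}} \in \sigma$, whence every initial segment of $\script{T}\of{s}$ lies in $\sigma$ and the play is $\sigma$-compatible. No gaps; in particular you correctly isolate the one point needing care, namely that restriction commutes with the union defining $\script{T}\of{s}$.
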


In the following, $\cat$ will be used to denote the concatenation operator on finite sequences.

\begin{mythm}   \label{thm:targetProperties}
Suppose $X$ is regular, and $D$ is a dense subset of $X$. Suppose NE has a winning strategy which targets $D$ in the Choquet game on $X$. Then
\begin{enumerate}
\item $D$ is non-meager in $X$;
\item NE has a winning strategy in the Choquet game on $D$.
\end{enumerate}
\end{mythm}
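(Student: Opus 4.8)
The plan is to use NE's targeting strategy $\sigma$ on $X$ as an oracle in two different ways: for (1) I let E play \emph{against} $\sigma$ so as to push the outcome out of any prescribed meager set, and for (2) I let NE \emph{simulate} $\sigma$ on $X$ while actually playing the game on $D$.

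For (1), suppose toward a contradiction that $D$ is meager, say $D \subseteq \bigcup_{n} C_n$ with each $C_n$ closed and nowhere dense (replace each nowhere dense set by its closure). Fix NE's targeting strategy $\sigma$, and describe a play of E against it. Let E open with any non-empty $U_0$, and whenever NE has answered $V_n = \sigma\of{U_0,\dots,U_n}$, let E respond with a non-empty open $U_{n+1} \subseteq V_n \setminus C_n$. This is a legal move, since $C_n$ is closed with empty interior, so $V_n \not\subseteq C_n$ and $V_n \setminus C_n = V_n \cap \p{X\setminus C_n}$ is a non-empty open subset of $V_n$. The resulting play is $\sigma$-compatible, and as $\bigcap_m U_m \subseteq U_{n+1} \subseteq X \setminus C_n$ for every $n$, we get $\bigcap_m U_m \cap \bigcup_n C_n = \emptyset$, hence $\bigcap_m U_m \cap D = \emptyset$. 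This contradicts the assumption that $\sigma$ targets $D$.

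For (2), I build a strategy $\tau$ for NE in the Choquet game on $D$ by maintaining, alongside the genuine play in $D$, a shadow play in $X$ governed by $\sigma$. Every open subset of $D$ is a trace $W \cap D$ of an open $W \subseteq X$; to make the lift canonical I set $\lambda\of{\tilde U} = X \setminus \closure{D \setminus \tilde U}$ for $\tilde U$ open in $D$, the largest open subset of $X$ meeting $D$ in exactly $\tilde U$. When E opens with a non-empty open $\tilde U_0$ in $D$, NE feeds $U_0 = \lambda\of{\tilde U_0}$ to $\sigma$, obtains $V_0 = \sigma\of{U_0}$, and answers with $\tilde V_0 = V_0 \cap D$; density of $D$ makes $\tilde V_0$ non-empty, and $V_0 \subseteq U_0$ gives $\tilde V_0 \subseteq \tilde U_0$. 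Inductively, when E plays $\tilde U_{n+1} \subseteq \tilde V_n$, set $U_{n+1} = \lambda\of{\tilde U_{n+1}} \cap V_n$; one checks that $U_{n+1}$ is a non-empty open subset of $V_n$ with $U_{n+1} \cap D = \tilde U_{n+1}$, so it is a legal E-move extending the shadow play, and NE answers $\tilde V_{n+1} = \sigma\of{U_0,\dots,U_{n+1}} \cap D$.

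Since $\tilde U_n = U_n \cap D$ holds at every stage, the assignment sending each $\tau$-compatible play in $D$ to its shadow play in $X$ is a transfer map in the sense of the transfer-map Definition above, so by the following Observation the shadow play is $\sigma$-compatible. Targeting then gives $\bigcap_n U_n \cap D \neq \emptyset$, and as $\bigcap_n \tilde V_n = \bigcap_n \p{V_n \cap D} = \p{\bigcap_n U_n} \cap D$, NE wins the play in $D$; hence $\tau$ is winning. I expect the main obstacle to be precisely the bookkeeping in (2): arranging the lifts $U_n$ so that the shadow play is simultaneously a legal play for E, faithful to the real play via the identity $U_n \cap D = \tilde U_n$, and genuinely $\sigma$-compatible, so that the single hypothesis that $\sigma$ targets $D$ can be cashed in as NE's win on $D$.
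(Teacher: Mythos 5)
Your proof is correct, and part (2) takes a genuinely different route from the paper's. For (1) the paper only remarks that the claim is easy to verify; your argument---letting E steer a $\sigma$-compatible play through the complements of the closed nowhere dense sets covering $D$---is exactly the intended verification. For (2), the paper first replaces the game on $X$ by the game with moves restricted to regular open sets (this is where the regularity hypothesis enters: the regular open sets form a base of a regular space), and then lifts a move $U$ of the game on $D$ to $L\of{U} = \interior{\closure{U}^X}$; since NE's shadow moves $V_n'$ are themselves regular open, one has $L\of{V_n' \intersection D} = V_n'$ exactly, and monotonicity of $L$ makes every lifted play automatically legal. You instead lift by the \emph{maximal} open extension $\lambda\of{\tilde U} = X \setminus \closure{D \setminus \tilde U}$, for which $\lambda\of{\tilde U} \intersection D = \tilde U$ holds for every open $\tilde U \subseteq D$ with no restriction to a special base, and you restore legality of the shadow play by brute force, intersecting with the previous shadow move $V_n$; the identity $U_{n+1} \intersection D = \lambda\of{\tilde U_{n+1}} \intersection V_n \intersection D = \tilde U_{n+1}$ (valid because $\tilde U_{n+1} \subseteq V_n \intersection D$) keeps the shadow faithful, and your intersection computation correctly reduces NE's win on $D$ to the targeting hypothesis. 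The trade-off is worth noting: your argument never uses regularity of $X$, so it proves the theorem for arbitrary topological spaces, a strictly more general statement than the one claimed; the paper's regular-open formulation, on the other hand, produces a lift that depends only on E's last move, which is what makes Remark~\ref{rem:targetableStrengthenings} (if $\sigma$ is a winning $1$-tactic, so is the transferred strategy) immediate---in your construction $U_{n+1} = \lambda\of{\tilde U_{n+1}} \intersection V_n$ depends on the shadow history through $V_n$ (and $\lambda\of{\tilde U_{n+1}}$ alone need not sit inside $V_n$, since $\lambda\of{\tilde V_n}$ can be strictly larger than $V_n$), so preservation of $1$-tactics is lost as written.
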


\begin{proof}
(1) is easy to verify, so we focus on (2). First observe that the set $\script{R}$ of regular open subsets of $X$ forms a base for $X$. It is easily verified that NE has a winning strategy that targets $D$ in the Choquet game on $X$ where both players, E and NE, are restricted to playing elements of $\script{R}$. We also have that $\script{R} \mesh D = \set{U \intersection D}:{U \in \script{R}}$ is a base for the topology on $D$, and it suffices to show that NE has a winning strategy in the Choquet game on $D$ where moves are restricted to open sets from $\script{R} \mesh D$. For $U \in \script{R} \mesh D$, let us define $L\of{U} = \interior{\closure{U}^X}$, which is regular open in $X$, and observe that $L\of{U} \intersection D = U$, and if $U \subseteq V$ then $L\of{U} \subseteq L\of{V}$.

So let $\sigma$ be a winning strategy for NE targetting $D$ in the Choquet game on $X$ with moves restricted to $\script{R}$. We build a winning strategy $\mu$ for NE in the Choquet game on $D$ with moves from $\script{R} \mesh D$, alongside a transfer map $\mu \to \sigma$, denoted $\script{T}$. $\script{T}$ will have the following properties;
\begin{enumerate}
\item If $s = \p{U_0, V_0, \dots, U_n} \in \mu$ then $\script{T}\of{s} = t \cat \p{L\of{U_n}}$ for some $t$;
\item If $s = \p{U_0, V_0, \dots, V_n} \in \mu$ then $\script{T}\of{s} = t \cat \p{L\of{V_n}}$ for some $t$.
\end{enumerate}
Provided $\mu$ and $\script{T}$ can be defined in this way, then whenever $s = \p{U_0, V_0, \dots}$ is a $\mu$-compatible play of the Choquet game on $D$ with moves from $\script{R} \mesh D$, then $\script{T}\of{U_0, V_0, \dots} = \p{L\of{U_0}, L\of{V_0}, \dots}$, is $\sigma$-compatible, and so
\[ \Intersection_{n \in \omega} L\of{U_n} \intersection D \neq \emptyset. \]
But
\[ \Intersection_{n \in \omega} L\of{U_n} \intersection D = \Intersection_{n \in \omega} U_n \]
so $\mu$ is a winning strategy for NE in the Choquet game on $D$ with moves from $\script{R} \mesh D$. Hence $D$ is Choquet.

So we are left with the task of justifying the recursive construction of $\mu$ and $\script{T}$. We are required to have $\emptyset \in \mu$ and $\script{T} \of{\emptyset} = \emptyset$. Also for any $U_0 \in \script{R} \mesh D$, we have $\p{U_0} \in \mu$ and $\script{T}\of{U_0} = \p{L\of{U_0}}$. Now if $\p{U_0, V_0, \dots, V_n} \in \mu$, then for any $U_{n+1} \in \script{R} \mesh D$ with $U_{n+1} \subseteq V_n$ we have $\p{U_0, V_0, \dots, V_n, U_{n+1}} \in \mu$, and $\script{T}\of{U_0, V_0, \dots, V_n, U_{n+1}} = \script{T}\of{U_0, V_0, \dots, V_n} \cat \p{L\of{U_{n+1}}}$. Provided (1) and (2) have been staisfied in the recursion so far, this is well-defined. Now suppose $\p{U_0, V_0, \dots, U_n} \in \mu$. Then let $V_n^\prime = \sigma\of{\script{T}\of{U_0, V_0, \dots, U_n}}$, and set $V_n = V_n^\prime \intersection D$. Then $V_n \in \script{R} \mesh D$, $V_n \subseteq U_n$, and $L\of{V_n} = V_n^\prime$. We then insist that $\p{U_0, V_0, \dots, U_n, V_n} \in \mu$, and
\[ \script{T}\of{U_0, V_0, \dots, U_n, V_n} = \script{T}\of{U_0, V_0, \dots, U_n} \cat \p{L\of{V_n}}. \]
This completes the construction of $\mu$ and $\script{T}$.
\end{proof}

\begin{myremark}    \label{rem:targetableStrengthenings}
Observe in the above, that if $\sigma$ is a winning $1$-tactic, then so is $\mu$. Furthermore, one obtains a similar result where ``Choquet'' is replaced by ``strongly Choquet'', by adapting the above proof.
\end{myremark}

Conversely, if $D$ is a dense Choquet subspace of $X$, then NE has a winning strategy targeting $D$ in the Choquet game on $X$ (\cite[1.2.3]{LuptonThesis}). Hence we have

\begin{mythm}    \label{thm:classTargetables}
Let $X$ be a regular (Choquet) space, and $D$ a dense subspace of $X$. Then $D$ is Choquet if and only if NE has a winning strategy in the Choquet game on $X$ which targets $D$. \qed
\end{mythm}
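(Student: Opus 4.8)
The plan is to split the biconditional into its two implications and observe that one of them is already in hand. First I would dispatch the implication ``NE has a winning strategy on $X$ targeting $D$ $\Rightarrow$ $D$ is Choquet''. This is nothing but Theorem~\ref{thm:targetProperties}(2): the hypotheses there---$X$ regular, $D$ dense, and NE possessing a winning strategy that targets $D$---are exactly the present assumptions, and conclusion (2) of that theorem states precisely that NE has a winning strategy in the Choquet game on $D$, i.e.\ that $D$ is Choquet. So this direction requires no new argument, and regularity of $X$ enters the proof only here, through the regular-open-set machinery of Theorem~\ref{thm:targetProperties}.

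For the converse, ``$D$ Choquet $\Rightarrow$ NE has a winning strategy on $X$ that targets $D$'', I would lift a winning strategy from $D$ up to $X$ using density. Let $\tau$ be a winning strategy witnessing that $D$ is Choquet. I would define a strategy $\sigma$ for NE in the Choquet game on $X$ that runs a simulated $\tau$-play on $D$ in parallel with the real play on $X$. Whenever E plays a non-empty open $U_n \subseteq X$, density of $D$ makes $U_n \cap D$ a non-empty open subset of $D$, so I can feed $U_n \cap D$ to $\tau$ and receive a response $V_n' \subseteq U_n \cap D$ open in $D$. Choosing open $W \subseteq X$ with $V_n' = W \cap D$ and setting $V_n = W \cap U_n$, one checks that $V_n$ is a non-empty open subset of $U_n$ with $V_n \cap D = V_n'$; this $V_n$ is NE's move in $X$. (One must also check the simulated play is legal: since $U_{n+1} \subseteq V_n$ and $V_n \cap D = V_n'$, we get $U_{n+1} \cap D \subseteq V_n'$, and $U_{n+1} \cap D \neq \emptyset$ by density, so each $U_n \cap D$ is a legitimate move for E against $\tau$.)

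To verify $\sigma$ works, note that by construction $\langle U_0 \cap D, V_0', U_1 \cap D, V_1', \dots\rangle$ is a $\tau$-compatible play on $D$, so $\bigcap_n V_n' \neq \emptyset$ because $\tau$ is winning. Since the game chain satisfies $\bigcap_n U_n = \bigcap_n V_n$, the single computation $(\bigcap_n U_n) \cap D = (\bigcap_n V_n) \cap D = \bigcap_n (V_n \cap D) = \bigcap_n V_n' \neq \emptyset$ shows at once that $\bigcap_n U_n \neq \emptyset$, so $\sigma$ is winning, and that this intersection meets $D$, so $\sigma$ targets $D$.

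The only delicate point is bookkeeping: $\sigma$ must retain enough history to reconstruct the simulated $\tau$-play at each stage, so it is naturally a full-history strategy rather than a $1$-tactic. This is handled cleanly in the tree formalism, or via a transfer map from $\sigma$ to $\tau$ mirroring the construction in the proof of Theorem~\ref{thm:targetProperties}. I expect no genuine obstacle---both directions reduce to routine verifications once the simulation is set up---and indeed this converse is exactly the statement invoked above as \cite[1.2.3]{LuptonThesis}, so in the paper the whole theorem should follow by combining that reference with Theorem~\ref{thm:targetProperties}.
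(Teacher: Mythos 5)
Your proposal is correct and follows exactly the paper's route: the forward implication is Theorem~\ref{thm:targetProperties}(2), and the converse is the result the paper simply cites as \cite[1.2.3]{LuptonThesis}, for which your simulation argument (feeding $U_n \cap D$ to a winning strategy on $D$, lifting its response $V_n'$ to $V_n = W \cap U_n$ with $V_n \cap D = V_n'$, and using $\bigcap_n U_n \cap D = \bigcap_n V_n' \neq \emptyset$) is a correct and complete proof. Your side remarks are also accurate: regularity is needed only for the forward direction, and the only formal care required in the converse is fixing choices of $W$ so the lifted strategy is well-defined in the tree formalism.
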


In particular, since the property of being a targetable subspace of a Choquet space is upward hereditary, we obtain the following result.

\begin{mycor}	\label{thm:FT1BaireSpace}
Both $\SCinjectives$  and $\SCfins$, as subspaces of $\Cwstar$, are Choquet and hence Baire. \qed
\end{mycor}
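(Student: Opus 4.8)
The plan is to obtain both claims as essentially immediate consequences of Theorem~\ref{thm:classTargetables}, by assembling the ingredients already in place and adding one easy observation about targetability. Recall that $\Cwstar$ is Tychonoff (hence regular) and has been shown to be Choquet, and that the corollary following Theorem~\ref{thm:intersections} produces, for the Choquet game on $\Cwstar$, a winning strategy for NE whose every compatible play $\p{U_0, V_0, U_1, \dots}$ satisfies $\bigcap_{n \in \omega} U_n \intersection \SCinjectives \neq \emptyset$; that is, NE has a winning strategy targeting $\SCinjectives$.

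For $\SCinjectives$, I would verify the three hypotheses of Theorem~\ref{thm:classTargetables} and apply it directly: $\Cwstar$ is a regular Choquet space, $\SCinjectives$ is dense in $\Cwstar$ by Corollary~\ref{thm:FT1dense}, and NE has a winning strategy targeting $\SCinjectives$ as just recalled. The theorem then gives that $\SCinjectives$ is Choquet.

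For $\SCfins$, the key point is that targetability is upward hereditary, as noted in the text immediately preceding the statement. Since every injective map is finite-to-one we have $\SCinjectives \subseteq \SCfins$; hence any play satisfying $\bigcap_{n \in \omega} U_n \intersection \SCinjectives \neq \emptyset$ automatically satisfies $\bigcap_{n \in \omega} U_n \intersection \SCfins \neq \emptyset$, so the very same strategy that targets $\SCinjectives$ also targets $\SCfins$. As $\SCfins \supseteq \SCinjectives$ is likewise dense, a second application of Theorem~\ref{thm:classTargetables} shows $\SCfins$ is Choquet. Finally, every Choquet space is Baire, giving the ``hence Baire'' clauses for both.

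There is no substantial obstacle here: all of the work has been front-loaded into Theorem~\ref{thm:classTargetables} and the explicit construction of the targeting strategy. The only steps needing any care are confirming that the strategy targeting $\SCinjectives$ may be reused verbatim for $\SCfins$ (immediate from the inclusion $\SCinjectives \subseteq \SCfins$) and checking that the density and regularity hypotheses hold in each application, both of which are already recorded above.
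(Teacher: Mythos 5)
Your proposal is correct and follows exactly the paper's intended argument: the corollary after Theorem~\ref{thm:intersections} gives a winning strategy for NE targeting $\SCinjectives$, Theorem~\ref{thm:classTargetables} (with density from Corollary~\ref{thm:FT1dense} and regularity of the Tychonoff space $\Cwstar$) yields that $\SCinjectives$ is Choquet, and the upward heredity of targetability via $\SCinjectives \subseteq \SCfins$ handles $\SCfins$. This is precisely why the paper states the corollary with a \qed and no further proof.
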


Using the previous remark, NE also has a winning $1$-tactic in the Choquet game on $\SCinjectives$ (and on $\SCfins$), and both $\SCinjectives$ and $\SCfins$ are strongly Choquet.

\section{\texorpdfstring{$P$-points and weak $P$-points in $\CkX$}{P-points and weak P-points} }
\label{sectionPpoints}

We show that for an infinite compact zero-dimensional  space $X$, no autohomeomorphism of $X$ can be a $P$-point in $\CkX$ and that it is independent of ZFC whether $\Cwstar$ contains $P$-points or not. On the other hand, we show that every autohomeomorphism of $\wstar$ is a weak $P$-point in $\Cwstar$. More generally, autohomeomorphisms and open finite-to-one maps in $\CkX$ are always weak $P$-points for compact Hausdorff $F$-spaces which are nowhere c.c.c.

A $P$\emph{-point} is a point $p$ such that any countable intersection of neighbourhoods of $p$ contains a neighbourhood of $p$. In other words, $p$ is a $P$-point if $p$ is in the interior of every $G_\delta$-set containing $p$. The existence of $P$-points in $\wstar$ was first shown as a consequence of the Continuum Hypothesis (CH) by Rudin in \cite{Rudin}. The existence of $P$-points can also be shown under MA+$\neg$CH \cite[2.5.5]{Intro}. In general, however, Shelah proved it consistent that $P$-points in $\wstar$ do not exist \cite[2.7]{Intro}. 

A \emph{weak $P$-point} is a point $p$ which does not lie in the boundary of any countable set. The ZFC-existence of weak $P$-points in $\wstar$ was first shown by Kunen in \cite{Kunen}. Kunen's result was subsequently generalised to wider classes of compact $F$-spaces. First, van Mill proved in \cite{Mill79} the existence of weak $P$-points for every compact crowded $F$-space of weight $\cont$ in which each non-empty $G_\delta$ has non-empty interior. The weight restriction in van Mill's result was subsequently removed by Bell in \cite{Bell}. One year later, Dow and van Mill proved the following theorem.

\begin{mythm}[Dow and van Mill, \cite{DowMill80}]
\label{ExistenceWeakPPoints}
Every compact nowhere c.c.c.\ $F$-space contains a weak $P$-point.
\end{mythm}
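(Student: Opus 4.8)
The plan is to adapt Kunen's construction of weak $P$-points in $\wstar$ to the abstract setting, using the $F$-space hypothesis to recover the combinatorial behaviour of subsets of $\w$ and the nowhere c.c.c.\ hypothesis to supply enough ``room''. First I would reduce the statement to a cleaner form. Recall that $p$ is a weak $P$-point exactly when $p$ is not an accumulation point of any countable set $C \subseteq X \setminus \singleton{p}$, i.e.\ when some neighbourhood of $p$ meets each such $C$ in a finite set. Since compact $F$-spaces contain no non-trivial convergent sequences (property (4) of Section~2), any countable set accumulating at $p$ can be thinned to a relatively discrete set $D = \Set{d_n : n \in \w}$ with $p \in \closure{D} \setminus D$, and, by property (4) again, $\closure{D}$ is a copy of $\beta\w$. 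So it suffices to construct a single point $p$ that avoids $\closure{D} \setminus D$ for every countable relatively discrete $D$. The $F$-space property enters decisively here: disjoint open $F_\sigma$-sets have disjoint closures (property (2)), so regular closed sets in $X$ can be manipulated much as clopen subsets of $\wstar$ are, with unions and complements behaving well under closure.

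Next I would extract the combinatorial resource from nowhere c.c.c. Since no non-empty open subset of $X$ is c.c.c., every non-empty open set contains uncountably many pairwise disjoint non-empty open sets. Iterating this branching and using the disjoint-closure property of $F$-spaces, I would build an \emph{independent linked family} $\script{F}$ of regular closed subsets of $X$ of sufficiently large width: a family indexed so that every admissible finite Boolean combination has non-empty interior, which is precisely the feature of Kunen's independent linked families on $\w$ that drives the original argument. This is where the two hypotheses combine—nowhere c.c.c.\ gives the branching needed for independence, while the $F$-space property guarantees that the closures of the branches remain disjoint, so that the combinatorics of $\script{F}$ faithfully mirror those of an independent family on $\w$.

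Finally I would construct $p$ as the single point in the intersection of a decreasing tower of regular closed sets generated from $\script{F}$, by transfinite recursion, so that the neighbourhood filter at $p$ is generated in a linked, independent fashion. The weak-$P$ property would then follow not by treating each countable $D$ separately but from a single, uniform separation property of $\script{F}$: the manner in which $\script{F}$ generates the filter at $p$ should guarantee, for every countable relatively discrete $D$ with $\closure{D} \cong \beta\w$, that some generator of the filter misses all but finitely many of the $d_n$, whence $p \notin \closure{D} \setminus D$. Maintaining the finite-intersection property at every stage of the recursion via independence is the bookkeeping part of this step.

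I expect the main obstacle to be the combinatorial core: constructing the independent linked family purely topologically (with no arithmetic of $\w$ available) and proving the uniform separation lemma that membership of $p$ in the $\script{F}$-generated filter genuinely forces $p$ out of the closure of every countable set. The essential subtlety is that one cannot enumerate all countable subsets directly—a space of weight up to $2^{\cont}$ has far too many, so a naive diagonalisation of length $2^{\cont}$ is hopeless—and the argument must instead exploit a uniform, $\textnormal{OK}$-point-style property of the one family $\script{F}$ that handles every countable $D$ simultaneously. Managing this uniformity coherently through the limit stages of the recursion is the heart of the matter, and is exactly what Kunen's method, as generalised by Dow and van Mill, is designed to accomplish.
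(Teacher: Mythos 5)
First, a point of order: the paper does not prove Theorem~\ref{ExistenceWeakPPoints} at all --- it is quoted as background from Dow and van Mill \cite{DowMill80}, so there is no internal proof to compare yours against; your attempt must be measured against the literature. Measured that way, what you have written is a roadmap rather than a proof, and the two steps you defer are precisely the content of the theorem. The first is the ``independent linked family'': it is asserted, not constructed. Nowhere c.c.c.\ does give an uncountable disjoint open family inside every non-empty open set, and the $F$-space property gives disjoint closures for disjoint cozero-sets; but independence demands far more than branching together with disjoint closures --- every admissible finite Boolean combination, i.e.\ intersections of some members with complements of the closures of others, must have non-empty interior, and nothing you set up delivers this. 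In $\wstar$ that feature comes from genuine combinatorics on $\w$ (Kunen's independent matrices \cite{Kunen}), and finding a substitute in an abstract compact nowhere c.c.c.\ $F$-space is exactly where the difficulty of the theorem lives. Note also that the separation property you lean on is stated for disjoint \emph{open $F_\sigma$} (equivalently, cozero) sets, so closure-disjointness for the uncountable families your tree requires is not covered by it without further argument.

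The second deferred step, the ``uniform separation property'' of the family --- that the filter it generates forces $p$ outside $\closure{D}$ for \emph{every} countable relatively discrete $D$ simultaneously --- is a restatement of the conclusion, and you offer no mechanism beyond naming the OK-point style of argument; this is where Kunen's proof does its work, and where van Mill \cite{Mill79} needed weight $\cont$ plus the $G_\delta$-property, hypotheses that Bell and then Dow--van Mill had to work to circumvent. Your sketch does not indicate how nowhere c.c.c.\ substitutes for them. Two smaller problems: the thinning of an arbitrary countable set accumulating at $p$ to a \emph{relatively discrete} one still accumulating at $p$ does not follow from the absence of convergent sequences and needs its own (standard but non-trivial) argument --- what property (4) of Section~2 gives is that closures of countable discrete sets are copies of $\beta\w$, not the thinning itself; and a decreasing tower of regular closed sets in a compact space has non-empty intersection, but nothing makes that intersection a single point (fortunately nothing requires it to be). Finally, as the title of \cite{DowMill80} suggests, the cited proof is organised differently, factoring through nowhere dense c.c.c.\ $P$-sets rather than rerunning Kunen's construction wholesale inside $X$; so your proposal is neither a complete proof nor a reconstruction of the cited argument. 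Verdict: genuine gap --- you have named the right ingredients, but the combinatorial construction and the uniformity lemma, which jointly \emph{are} the theorem, are missing.
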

In \cite{Dow82}, Dow complemented this result and proved that c.c.c.\ compact $F$-spaces of weight at least  $\cont^+$ contain weak $P$-points, and that it is consistent that every non-separable compact c.c.c.\ $F$-space contains a weak $P$-point.

\subsection*{\texorpdfstring{$P$-points in $\CkX$}{P-points}} We now ask under what conditions $\CkX$ contains $P$-points. 

\begin{mylem}[{\cite[XII.1.2]{Dugundji}.}]
Every space $X$ embeds into $\CkX$. \qed
\end{mylem}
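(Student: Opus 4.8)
The plan is to exhibit an explicit embedding and verify its defining properties directly against the subbasis for the compact-open topology. The natural candidate is the \emph{canonical map} $\function{e}{X}{\CkX}$ sending each point $x \in X$ to the constant self-map $c_x$ with value $x$; this is precisely the map appearing in the semigroup characterisation of the compact-open topology cited in the introduction. If $X = \emptyset$ the claim is vacuous, since then $\CkX$ consists of the single empty function, so I would assume throughout that $X \neq \emptyset$.

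First I would check injectivity: if $x \neq y$, then choosing any $z \in X$ gives $c_x(z) = x \neq y = c_y(z)$, so $c_x \neq c_y$. Next, continuity of $e$ follows by pulling back the subbasic sets $\CompactOpen{C}{U}$ with $C \subseteq X$ compact and $U \subseteq X$ open. Since $\image{C}{c_x} = \singleton{x}$ whenever $C \neq \emptyset$, we have $c_x \in \CompactOpen{C}{U}$ if and only if $x \in U$, so $\preimage{\CompactOpen{C}{U}}{e}$ equals $U$ (or all of $X$, in the degenerate case $C = \emptyset$), which is open. Hence $e$ is continuous.

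Finally, to see that $e$ is a homeomorphism onto its image—equivalently, that it carries open sets of $X$ to relatively open subsets of $e(X)$—I would exploit the fact that singletons are compact. For an open $V \subseteq X$, fix any $p \in X$ and consider the subbasic set $\CompactOpen{\singleton{p}}{V}$. A constant map $c_x$ lies in it if and only if $c_x(p) = x \in V$, whence $\CompactOpen{\singleton{p}}{V} \intersection e(X) = e(V)$. Thus every $e(V)$ is relatively open in $e(X)$, and combined with the bijectivity and continuity already established, $e$ is the desired embedding.

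The argument is entirely routine, so there is no genuine obstacle; the only points requiring the slightest care are the degenerate cases (the domain $X$ empty, or the compact set $C$ empty inside a subbasic neighbourhood) and the observation that singleton compact sets suffice to recover openness onto the image. This last step is where it matters that the compact-open topology is at least as fine as the topology of pointwise convergence, which is exactly what makes the constant maps inherit the topology of $X$.
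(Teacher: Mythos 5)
Your proposal is correct and follows exactly the route the paper takes: the lemma is quoted from Dugundji with the remark that the canonical map $x \mapsto f_x$ sending a point to the corresponding constant function is the embedding, and your verification (injectivity, continuity via preimages of subbasic sets $\CompactOpen{C}{U}$, and openness onto the image using $\CompactOpen{\singleton{p}}{V}$) is precisely the routine check the paper leaves to the cited reference. Your handling of the degenerate cases ($X = \emptyset$, $C = \emptyset$) is careful and correct.
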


Indeed, the map $x \mapsto f_x$ sending a point to the corresponding constant function $f_x$ is an embedding. The next result shows that $P$-points and weak $P$-points are preserved by this embedding.

%

\begin{myobs}
\label{Ppointsexist}
Let $X$ be a compact space and $p$ be a (weak) $P$-point of $X$. Then $f_p$ is a (weak) $P$-point in $\CkX$.
\end{myobs}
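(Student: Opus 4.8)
The plan is to reduce both claims to a single convenient neighbourhood base at the constant map $f_p$. Since $X$ is compact, $X$ is itself an admissible compact parameter, so each $\CompactOpen{X}{U}$ with $U$ an open neighbourhood of $p$ is a genuine subbasic open set, and it contains $f_p$ because $\image{X}{f_p} = \singleton{p} \subseteq U$. First I would verify the auxiliary fact that $\set{\CompactOpen{X}{U}}:{U \textrm{ open}, \, p \in U}$ is a neighbourhood base at $f_p$: any basic neighbourhood $\Intersection_{i} \CompactOpen{C_i}{U_i}$ of $f_p$ satisfies $p \in U_i$ for every $i$ (as $\image{C_i}{f_p} = \singleton{p}$), and then $\CompactOpen{X}{\Intersection_i U_i}$ is contained in it, since $g \in \CompactOpen{X}{\Intersection_i U_i}$ gives $\image{C_i}{g} \subseteq \image{X}{g} \subseteq U_i$ for all $i$. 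This is the only place where the compactness of $X$ is used, and it is precisely what makes the whole reduction possible; I expect it to be the one genuinely delicate point.

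For the $P$-point case, I would take an arbitrary countable family $\set{N_k}:{k \in \w}$ of neighbourhoods of $f_p$ and, using the base above, choose for each $k$ an open $U_k$ with $p \in U_k$ and $\CompactOpen{X}{U_k} \subseteq N_k$. Applying the $P$-point property of $p$ in $X$ to the countably many $U_k$ yields an open $V$ with $p \in V \subseteq \Intersection_k U_k$. Since $V \subseteq U_k$ implies $\CompactOpen{X}{V} \subseteq \CompactOpen{X}{U_k}$, the set $\CompactOpen{X}{V}$ is a neighbourhood of $f_p$ contained in $\Intersection_k N_k$, which is exactly what is required for $f_p$ to be a $P$-point of $\CkX$.

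For the weak $P$-point case I would argue by contradiction: suppose $\mathcal{D} = \set{g_k}:{k \in \w}$ is a countable subset of $\CkX$ with $f_p \notin \mathcal{D}$ but $f_p \in \closure{\mathcal{D}}$. By the neighbourhood base, $f_p \in \closure{\mathcal{D}}$ means that for every open neighbourhood $U$ of $p$ there is some $g_k$ with $\image{X}{g_k} \subseteq U$. As no $g_k$ equals the constant map $f_p$, each range $\image{X}{g_k}$ contains a point distinct from $p$, so I may pick $x_k \in \image{X}{g_k} \setminus \singleton{p}$ and set $D = \set{x_k}:{k \in \w}$. Then $D$ is countable with $p \notin D$, and every neighbourhood $U$ of $p$ meets $D$ (choosing $k$ with $\image{X}{g_k} \subseteq U$ gives $x_k \in U$), so $p \in \closure{D} \setminus D \subseteq \bd{D}$ (as $\interior{D} \subseteq D$). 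This contradicts $p$ being a weak $P$-point of $X$, completing the argument.

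Once the neighbourhood-base lemma of the first paragraph is in hand, the $P$-point case is an immediate transfer of the defining property from $X$ to $\CkX$, and the only subtlety in the weak $P$-point case is the use of $g_k \neq f_p$ to keep the witnessing set $D$ off the point $p$; beyond establishing that base, I anticipate no serious obstacle.
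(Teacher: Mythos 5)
Your proposal is correct and follows essentially the same route as the paper: both arguments hinge on neighbourhoods of the form $[X,U]$ with $p \in U$ (legitimate precisely because $X$ is compact), transfer the $P$-point property of $p$ through them, and in the weak $P$-point case pick witnesses $x_l \in \textnormal{ran}(f_l) \setminus \{p\}$ from the ranges. The only cosmetic differences are that you make the neighbourhood base at $f_p$ explicit and run the weak $P$-point case by contradiction, whereas the paper directly exhibits the separating neighbourhood $[X, X \setminus \closure{\{x_l : l \in \omega\}}]$.
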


\begin{proof}
If $p$ is a $P$-point in $X$, one checks that whenever $f_p \in \bigcap_{n\in \w} [C_n,U_n]$ for $C_n \subseteq X$ compact and $U_n \subseteq X$ open, then $f_p \in [X,\interior{\bigcap U_n}] \subseteq \bigcap_{n\in \w} [C_n,U_n]$.

If $p$ is a weak $P$-point and $\set{f_l}:{l \in \w} \subseteq \CkX \setminus \singleton{f_p}$ a countable set, pick points $x_l \in \textnormal{ran}(f_l) \setminus \singleton{p}$ for each $l \in \w$. Then $[X,X \setminus \closure{\set{x_l}:{l \in \w}}]$ is a neighbourhood separating $f_p$ from $\set{f_l}:{l \in \w}$ as required.
\end{proof}

In particular, it follows for compact spaces $X$ that $f_p \in \CkX$ is a (weak) $P$-point if and only if $p \in X$ is a (weak) $P$-point.

The \emph{evaluation map} $\function{\eval}{\CkX \times X}{X}$ is given by $\pair{f, x} \mapsto f(x)$. For a point $x \in X$, the \emph{evaluation at $x$} is the map $\eval_x \colon \CkX \to X$ given by $f \mapsto f(x)$. The next lemmas show that evaluation maps are continuous open functions (with respect to the compact-open topology). 

\begin{mylem}[{\cite[3.4.3]{Eng}}]
\label{evaliscts}
For a locally compact Hausdorff space $X$, the evaluation mapping $\eval$ is continuous with respect to the compact-open topology. Consequently, the evaluation map at $x$, $\eval_x$, is continuous for every $x \in X$. \qed
\end{mylem}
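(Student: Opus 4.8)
The plan is to prove continuity of $\eval$ directly at an arbitrary point $\pair{f,x} \in \CkX \times X$ by exhibiting, for every open $W \subseteq X$ with $\eval\p{f,x} = f(x) \in W$, a basic open neighbourhood of $\pair{f,x}$ whose image under $\eval$ lands inside $W$. Since the sets $\CompactOpen{C}{U}$ with $C$ compact and $U$ open form a subbasis for $\CkX$, and products of open sets form a basis for $\CkX \times X$, it suffices to produce such a neighbourhood of the form $\CompactOpen{K}{W} \times V$ with $K$ compact and $V$ open.

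First I would use the continuity of $f$: the set $\preimage{W}{f}$ is open and contains $x$. The crucial step is then to invoke the hypotheses on $X$. Since $X$ is locally compact Hausdorff, I can squeeze a relatively compact open set between $x$ and $\preimage{W}{f}$, that is, find an open $V$ with $x \in V \subseteq \closure{V} \subseteq \preimage{W}{f}$ and $\closure{V}$ compact. Setting $K = \closure{V}$, I would check that $\CompactOpen{\closure{V}}{W} \times V$ is the required neighbourhood: it contains $\pair{f,x}$ because $\image{\closure{V}}{f} \subseteq \image{\preimage{W}{f}}{f} \subseteq W$ and $x \in V$; and for any $\pair{g,y}$ lying in it we have $y \in V \subseteq \closure{V}$ together with $\image{\closure{V}}{g} \subseteq W$, whence $\eval\p{g,y} = g(y) \in W$.

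The consequence for $\eval_x$ then follows immediately, since $\eval_x$ is the composite of the continuous slice inclusion $f \mapsto \pair{f,x}$ with $\eval$. Alternatively, one observes directly that $\preimage{U}{\eval_x} = \CompactOpen{\singleton{x}}{U}$ is a subbasic open set, so that $\eval_x$ is in fact continuous with no local compactness assumption needed at all.

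I expect the only genuine obstacle to be the squeezing step: everything else is a routine unravelling of the definition of the compact-open topology, but the existence of a relatively compact open $V$ with $x \in V \subseteq \closure{V} \subseteq \preimage{W}{f}$ is precisely where local compactness (to obtain a compact neighbourhood of $x$) and the Hausdorff property (to shrink that neighbourhood inside the prescribed open set) are each used in an essential way.
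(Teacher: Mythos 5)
Your proof is correct and is exactly the standard argument the paper defers to by citing \cite[3.4.3]{Eng}: local compactness plus Hausdorffness give the squeeze $x \in V \subseteq \closure{V} \subseteq \preimage{W}{f}$ with $\closure{V}$ compact, and then $\CompactOpen{\closure{V}}{W} \times V$ is the required neighbourhood of $\pair{f,x}$ mapping into $W$. Your closing remark is also accurate: for $\eval_x$ alone no hypothesis on $X$ is needed, since $\preimage{U}{\eval_x} = \CompactOpen{\singleton{x}}{U}$ is already a subbasic open set of the compact-open topology.
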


\begin{mylem}
\label{evaluation}
Let $X$ be a zero-dimensional locally compact space. For every point $x \in X$, the evaluation map at $x$, $\eval_x$, is a continuous open map.
\end{mylem}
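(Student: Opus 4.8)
The plan is to observe that continuity is already supplied by Lemma~\ref{evaliscts}, so the entire content of the statement is the openness of $\eval_x$. Since the image of a union of sets is the union of their images, it suffices to check that $\eval_x$ sends each member of a base of $\CkX$ to an open subset of $X$, and the convenient base is the one furnished by Theorem~\ref{bigintersectionresult}. I would therefore fix an arbitrary nonempty basic open set $U = \bigcap_{i=0}^n [A_i,B_i]$, with $\{A_0,\dots,A_n\}$ pairwise disjoint compact clopen and each $B_i$ compact clopen. After discarding any index with $A_i = \emptyset$ (for which $[A_i,B_i] = \CkX$) and noting that nonemptiness of $U$ forces each surviving $B_i \neq \emptyset$, I may assume all the $A_i$ and $B_i$ are nonempty.

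The key tool throughout is that on a zero-dimensional space, any self-map that is constant on each cell of a finite clopen partition is continuous, so finite-range self-maps in $U$ are plentiful. I would then split into two cases according to whether $x$ lies in one of the $A_i$. If $x \in A_j$ for the (unique) such index $j$, then every $f \in U$ satisfies $f(x) \in f(A_j) \subseteq B_j$, whence $\eval_x(U) \subseteq B_j$. Conversely, for any $y \in B_j$ I would construct an $f \in U$ with $f(x) = y$ by setting $f \equiv y$ on $A_j$, $f \equiv b_i$ on $A_i$ for a chosen $b_i \in B_i$ when $i \neq j$, and $f$ constant on the clopen remainder $X \setminus \bigcup_i A_i$. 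This $f$ is continuous, lies in $U$, and attains $y$, so $\eval_x(U) = B_j$, which is clopen and in particular open.

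In the complementary case $x \notin \bigcup_i A_i$, nothing in the defining conditions of $U$ constrains $f(x)$, and I would show $\eval_x(U) = X$: for arbitrary $y \in X$, define $f \equiv b_i$ on each $A_i$ and $f \equiv y$ on the nonempty clopen set $X \setminus \bigcup_i A_i$, which contains $x$. Again $f$ is continuous, belongs to $U$, and satisfies $f(x) = y$. Thus in either case the image of a basic open set is open, which gives openness of $\eval_x$ on all of $\CkX$.

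I do not anticipate a genuine obstacle here; the only points requiring a little care are the bookkeeping that reduces to nonempty $A_i$ and $B_i$, and the observation that the finite-range functions used are continuous precisely because $X$ is zero-dimensional, so that the cells of each partition are clopen. The essential idea is simply that finite-range continuous self-maps realise every admissible value of $f(x)$.
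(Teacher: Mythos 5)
Your proof is correct and takes essentially the same approach as the paper's: continuity is quoted from Lemma~\ref{evaliscts}, and openness is established by computing that the image under $\eval_x$ of a basic open set $\bigcap_{i=0}^n [A_i,B_i]$ from Theorem~\ref{bigintersectionresult} equals $B_j$ if $x \in A_j$ and $X$ otherwise. The paper dismisses this computation as ``easy to verify''; your constant-on-cells construction of finite-range continuous maps (together with the bookkeeping for empty $A_i$, $B_i$) is precisely the intended verification.
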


\begin{proof}
Continuity follows from the previous lemma. To show that $\eval_x$ is open, it is enough to consider the image of a basic open set $\bigcap_{k \leq n} [A_k,B_k]$ where all $A_k$ and $B_k$ are compact clopen subsets of $X$. Note that by Corollary~\ref{bigintersectionresult} we may assume all $A_k$ to be pairwise disjoint.
However, it is easy to verify that
$$ev_x(\bigcap_{k \leq n} [A_k,B_k]) = \begin{cases} B_k &\textnormal{ if } x \in A_k, \\ X & \textnormal{ otherwise,} \end{cases} $$
which is an open set as required.
\end{proof}

\begin{mylem}
Let $\sequence{U_x}{x \in X}$ be a sequence of open sets in $\CkX$. Let $S = \Union_{x \in X} U_x \times \singleton{x}$. Then $\eval\of{S}$ is open. In particular $\eval$ is an open map.
\end{mylem}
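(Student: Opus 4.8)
The plan is to show that the evaluation map $\eval$, when restricted to the ``bundle'' $S = \bigcup_{x \in X} U_x \times \singleton{x}$, has open image, and to recover the second sentence as the special case where every $U_x = \CkX$. The key tool is the previous lemma (Lemma~\ref{evaluation}), which tells us that each evaluation-at-$x$ map $\eval_x$ is open. The essential observation is that $\eval\of{S} = \Union_{x \in X} \eval_x\of{U_x}$, because a point $y \in X$ lies in $\eval\of{S}$ precisely when there is some $x \in X$ and some $f \in U_x$ with $f(x) = y$, i.e.\ precisely when $y \in \eval_x\of{U_x}$ for some $x$. Since each $U_x$ is open in $\CkX$ and $\eval_x$ is an open map, each $\eval_x\of{U_x}$ is open in $X$, and a union of open sets is open. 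This immediately gives that $\eval\of{S}$ is open.

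First I would verify the set-theoretic identity $\eval\of{S} = \Union_{x \in X} \eval_x\of{U_x}$ carefully, unwinding the definition $\eval\of{f,x} = f(x)$ and the definition of $S$ as a disjoint-over-fibres union. This is a routine chase through quantifiers but is the load-bearing step. Second, I would invoke Lemma~\ref{evaluation} to conclude each $\eval_x\of{U_x}$ is open (noting that this lemma requires $X$ zero-dimensional and locally compact, hypotheses which should be in force here, as they are throughout the section), and then observe that an arbitrary union of open sets is open.

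For the ``in particular'' clause, I would specialise by taking $U_x = \CkX$ for every $x \in X$, so that $S = \CkX \times X$ and $\eval\of{S} = \eval\of{\CkX \times X}$ is the image of the whole domain; openness of $\eval$ as a map then means exactly that it carries open sets to open sets, and since every open subset $W$ of $\CkX \times X$ can be written as a union of open boxes, and more to the point can be realised in the bundle form by setting $U_x = \set{f}:{(f,x) \in W}$ (which is open as a slice of an open set), the general statement applied to such $W$ yields that $\eval\of{W}$ is open.

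I expect the main obstacle to be purely bookkeeping rather than conceptual: making sure that the slice sets $U_x = \set{f \in \CkX}:{(f,x) \in W}$ used to deduce openness of $\eval$ from openness on bundles are genuinely open in $\CkX$ (they are, being preimages of $W$ under the continuous slice inclusion $f \mapsto (f,x)$), and confirming that every open $W \subseteq \CkX \times X$ arises as a bundle $S$ for an appropriate choice of the $U_x$. Once that identification is in place, no estimation or construction is needed, and the result follows directly from the openness of the individual evaluation maps established in Lemma~\ref{evaluation}.
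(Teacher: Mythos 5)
Your proof is correct, and for the main claim it is essentially the paper's own argument: the identity $\eval\of{S} = \Union_{x \in X} \eval_x\of{U_x}$ together with the openness of each $\eval_x$ from Lemma~\ref{evaluation} is exactly the load-bearing step in the paper. The only divergence is in the ``in particular'' clause. The paper writes each \emph{basic} open box $U \times V$ in bundle form (taking $U_x = U$ for $x \in V$ and $U_x = \emptyset$ otherwise) and implicitly relies on images commuting with unions to handle arbitrary open sets; you instead observe that \emph{every} open $W \subseteq \CkX \times X$ is already a bundle, with fibres $U_x = \preimage{W}{\iota_x}$ for the continuous slice inclusion $\iota_x \colon f \mapsto (f,x)$, so that $\eval\of{W}$ is open in one step with no reduction to a base. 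Your route is marginally more direct and makes the generality explicit; the paper's is marginally lighter on verification since openness of the constant-or-empty fibres is trivial. One small caution: your opening move of taking $U_x = \CkX$ for all $x$ only shows that $\eval\of{\CkX \times X}$ is open, which says nothing about openness of $\eval$ as a map --- but you correct this yourself within the same paragraph, and the slice argument that follows is the one that actually carries the conclusion, so nothing is missing.
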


\begin{proof} We have
$$ \eval\of{S} = \eval\of{\Union_{x \in X} U_x \times \singleton{x}} = \Union_{x \in X} \eval\of{U_x \times \singleton{x}} = \Union_{x \in X} \eval_x\of{U_x},$$
and since each $\eval_x$ is an open map by Lemma~\ref{evaluation}, the set  $\eval\of{S}$ is open. 

To show $\eval$ is an open map, simply note that every basic open set $U \times V$ in $\CkX \times X$ can be written in the above form: setting $U_x = U$ for $x \in V$ and $U_x = \emptyset$ for $x \not \in V$ gives $S=\Union_{x \in X} U_x \times \singleton{x} = U \times V$. Therefore, $\eval(U \times V)$ is open by the above.
\end{proof}

With the help of the evaluation map we can now characterise $P$-points in $\CkX$ for compact zero-dimensional $X$.

\begin{mylem}
\label{projectPpoints}
Suppose that $X$ is a zero-dimensional compact space. If $f \in \CkX$ is a $P$-point then all $y \in \textnormal{ran}(f)$ are $P$-points in $X$.
\end{mylem}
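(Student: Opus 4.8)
The plan is to deduce the lemma from a single general principle: \emph{continuous open maps send $P$-points to $P$-points}. The relevant maps here are the evaluation maps $\function{\eval_x}{\CkX}{X}$, which are continuous and open by Lemma~\ref{evaluation}. Since every $y \in \textnormal{ran}(f)$ has the form $y = f(x) = \eval_x(f)$ for some $x \in X$, it suffices to verify this principle and then apply it to $\eval_x$ at the point $f$.

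To establish the principle, I would argue as follows. Let $\function{g}{Y}{Z}$ be continuous and open, let $p$ be a $P$-point of $Y$, and write $q = g(p)$. Given any countable family $\set{V_n}:{n \in \w}$ of open neighbourhoods of $q$, each preimage $\preimage{V_n}{g}$ is an open neighbourhood of $p$ by continuity. As $p$ is a $P$-point, the set $\bigcap_{n} \preimage{V_n}{g} = \preimage{\bigcap_n V_n}{g}$ is a neighbourhood of $p$, so there is an open $W$ with $p \in W \subseteq \preimage{\bigcap_n V_n}{g}$. Now $g(W)$ is open (as $g$ is open), contains $q = g(p)$, and satisfies $g(W) \subseteq \bigcap_n V_n$ because $W \subseteq \preimage{\bigcap_n V_n}{g}$. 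Hence $\bigcap_n V_n$ is a neighbourhood of $q$, witnessing that $q$ is a $P$-point of $Z$.

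With the principle in hand, the lemma is immediate: fix $y \in \textnormal{ran}(f)$, choose $x$ with $f(x) = y$, and apply the principle to $g = \eval_x$ and $p = f$. Since $\eval_x$ is continuous and open by Lemma~\ref{evaluation} and $f$ is a $P$-point of $\CkX$, its image $y = \eval_x(f)$ is a $P$-point of $X$.

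I expect no serious obstacle here; the genuine content is entirely in recognising that the evaluation maps are open, which is precisely Lemma~\ref{evaluation} and relies on the zero-dimensionality and compactness of $X$ (so that the basic clopen sets $\bigcap_{k\le n}[A_k,B_k]$ have open images under $\eval_x$). The only points requiring a little care are the bookkeeping identity $\bigcap_n \preimage{V_n}{g} = \preimage{\bigcap_n V_n}{g}$ and the observation that openness of $g$ is exactly what lets us push the witnessing neighbourhood $W$ forward; note that surjectivity of $\eval_x$ is \emph{not} needed, only that $y$ lies in its range.
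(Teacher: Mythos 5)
Your proposal is correct and follows exactly the paper's route: the paper also deduces the lemma from the fact that continuous open maps preserve $P$-points (which it states as ``straightforward to verify''), applied to the evaluation map $\eval_x$ via Lemma~\ref{evaluation}. You have merely written out the verification the paper omits, and your details (including the identity $\bigcap_n \preimage{V_n}{g} = \preimage{\bigcap_n V_n}{g}$, pushing the witness $W$ forward by openness, and the remark that surjectivity of $\eval_x$ is not needed) are all sound.
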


\begin{proof}
It is straightforward to verify that the image of a $P$-point under a continuous open mapping is a $P$-point. Thus, if $y=f(x) \in \textnormal{ran}(f)$ then $y=ev_x(f)$ and the result follows from Lemma~\ref{evaluation}. 
\end{proof}

\begin{mythm}
\label{WhatarePfunctions}
Suppose that $X$ is a zero-dimensional compact Hausdorff space. Then $P$-points in $\CkX$ are precisely those functions that assume finitely many values, all of which are $P$-points in $X$.
\end{mythm}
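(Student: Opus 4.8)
The plan is to prove the two implications separately; the backward direction is routine, while the forward direction contains the real content. Throughout I will use that, by Theorem~\ref{bigintersectionresult}, every basic neighbourhood of a function has the form $\Intersection_{i\le m}[A_i,B_i]$ with $\explicitSet{A_0,\dots,A_m}$ a clopen partition of $X$ and $f(A_i)\subseteq B_i$, and that a $G_\delta$ containing $f$ always contains one of the form $\Intersection_{n\in\w}[C_n,U_n]$ (intersect countably many subbasic neighbourhoods from Lemma~\ref{subbasiclemma}), so it suffices to test the $P$-point property against such sets.

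For the easy direction, suppose $f$ takes finitely many values $y_0,\dots,y_k$, each a $P$-point of $X$. First I would note that $A_j:=\fibre{y_j}{f}$ is clopen, since these are finitely many pairwise disjoint closed sets covering $X$. Given a $G_\delta$ neighbourhood $\Intersection_{n\in\w}[C_n,U_n]\ni f$, for each $j$ set $N_j=\set{n}:{C_n\cap A_j\neq\emptyset}$; then $f(C_n\cap A_j)=\singleton{y_j}\subseteq U_n$ for $n\in N_j$, so $y_j\in\Intersection_{n\in N_j}U_n$. Using that $y_j$ is a $P$-point together with zero-dimensionality, I would choose a clopen $V_j\ni y_j$ with $V_j\subseteq\Intersection_{n\in N_j}U_n$. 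Then $\Intersection_{j\le k}[A_j,V_j]$ is a neighbourhood of $f$, and a short check (any $x\in C_n$ lies in some $A_j$ with $C_n\cap A_j\neq\emptyset$, whence $g(x)\in V_j\subseteq U_n$) shows it is contained in $\Intersection_{n\in\w}[C_n,U_n]$, so $f$ lies in the interior.

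For the hard direction, Lemma~\ref{projectPpoints} already guarantees that a $P$-point $f$ has all of its values $P$-points in $X$, so the only thing left is to show a $P$-point has finite range; I would prove the contrapositive. Assuming $\textnormal{ran}(f)$ infinite, I would first produce pairwise disjoint clopen sets $B_n\subseteq X$ together with distinct points $z_n\in B_n\cap\textnormal{ran}(f)$. Then, setting $A_n=\preimage{B_n}{f}$ (clopen, pairwise disjoint) and choosing $x_n\in A_n$ with $f(x_n)=z_n$, I would consider the $G_\delta$-set $G=\Intersection_{n\in\w}[A_n,B_n]$, which contains $f$. To see $f\notin\interior{G}$, take any basic neighbourhood $\Intersection_{i\le m}[A'_i,B'_i]$ of $f$; since the $x_n$ are infinitely many and $\explicitSet{A'_0,\dots,A'_m}$ partitions $X$, some $A'_{i_0}$ contains $x_n$ for infinitely many $n$, so $B'_{i_0}$ contains infinitely many of the (pairwise clopen-separated) $z_n$. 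Picking two such indices $n_0\neq n_1$, I would perturb $f$ only on the clopen set $A_{n_0}\cap A'_{i_0}\ni x_{n_0}$, redefining it there to the constant value $z_{n_1}\in B'_{i_0}$. The resulting $g$ is continuous, still lies in $\Intersection_{i\le m}[A'_i,B'_i]$, but $g(x_{n_0})=z_{n_1}\notin B_{n_0}$ (as $B_{n_0}\cap B_{n_1}=\emptyset$), so $g\notin G$. Hence no basic neighbourhood of $f$ is contained in $G$, and $f$ is not a $P$-point.

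The main obstacle is the very first step of the hard direction: extracting the separated family $\explicitSet{B_n}$. I would obtain it by a \emph{reservoir induction}, maintaining an infinite set $R_n\subseteq\textnormal{ran}(f)$ sitting inside a clopen set disjoint from $B_0,\dots,B_n$. At each stage the key point is that not every $z\in R$ can have the property that every clopen neighbourhood of $z$ contains cofinitely many points of $R$: if two such points $z,z'$ had disjoint clopen neighbourhoods $B,B'$, then $R\setminus B$ would be finite while $R\cap B'\subseteq R\setminus B$ would be infinite, a contradiction. Thus some $z_n\in R$ admits a clopen $B_n$ with $R\setminus B_n$ still infinite, allowing the induction to continue and delivering infinitely many pairwise disjoint clopen sets each meeting $\textnormal{ran}(f)$.
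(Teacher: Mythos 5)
Your proof is correct, and while the backward direction matches the paper's (which merely says ``generalise Lemma~\ref{Ppointsexist}'' --- you supply the omitted details, correctly, including the reduction to $G_\delta$-sets of the form $\bigcap_{n}[C_n,U_n]$), your forward direction takes a genuinely different route. Both you and the paper use Lemma~\ref{projectPpoints} to see that all values of a $P$-point are $P$-points of $X$; but for finiteness of the range the paper argues abstractly: if $\textnormal{ran}(f)$ were infinite, it would be an infinite compact Hausdorff space and hence contain non-$P$-points by a cited theorem of W.~Rudin, contradicting Lemma~\ref{projectPpoints}. You instead prove the contrapositive by hand: the reservoir induction extracts pairwise disjoint clopen sets $B_n$ meeting $\textnormal{ran}(f)$ (this step is sound --- two points whose every clopen neighbourhood is co-finite in $R$ would violate Hausdorffness, as you note), and then $G=\bigcap_n [f^{-1}(B_n),B_n]$ together with the clopen perturbation $g$ is an explicit $G_\delta$ witnessing that $f$ is not in its interior. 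What each buys: the paper's argument is shorter and its citation works for arbitrary infinite compact Hausdorff ranges; yours is self-contained, produces a concrete witnessing $G_\delta$, and in effect re-derives, in the zero-dimensional setting available by hypothesis, the standard fact behind Rudin's theorem that cluster points of infinite discrete sets are never $P$-points (your $z_n$, separated by the $B_n$, form such a discrete set). Two cosmetic repairs: Theorem~\ref{bigintersectionresult} gives pairwise \emph{disjoint} $A'_i$, not a partition of $X$, but adjoining the extra pair $[X\setminus\bigcup_{i\le m}A'_i,\,X]$ makes your pigeonhole and perturbation go through unchanged (this is how the paper itself passes to partitions for $\wstar$); and in the backward direction, when some $N_j$ is empty, read $\bigcap_{n\in N_j}U_n$ as $X$ and take $V_j$ to be any clopen neighbourhood of $y_j$.
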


\begin{proof}
Generalising Lemma~\ref{Ppointsexist} gives that every function whose range consists of finitely many $P$-points of $X$ is itself a $P$-point of $\CkX$.

Conversely, using Lemma~\ref{projectPpoints} it only remains to show that any $P$-point $f \in \CkX$ has finite range. But otherwise, $\textnormal{ran}(f)=f(X)$ is an infinite compact Hausdorff space and therefore contains non-$P$-points \cite[4.3]{Rudin}. This contradicts Lemma~\ref{projectPpoints}.
\end{proof}

\begin{mycor}
\label{AutosnoP}
For an infinite zero-dimensional compact space $X$, no autohomeomorphism of $X$ can be a $P$-point in $\CkX$. \qed
\end{mycor}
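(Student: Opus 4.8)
The plan is to derive Corollary~\ref{AutosnoP} as an immediate consequence of Theorem~\ref{WhatarePfunctions}. The theorem characterises the $P$-points of $\CkX$ as precisely those continuous maps that assume only finitely many values, each of which is a $P$-point of $X$. So to rule out autohomeomorphisms, I would argue that an autohomeomorphism of an infinite space cannot have finite range, and hence cannot meet the first condition in the characterisation.

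First I would let $h$ be any autohomeomorphism of the infinite zero-dimensional compact space $X$ and suppose, for contradiction, that $h$ is a $P$-point in $\CkX$. By Theorem~\ref{WhatarePfunctions}, the range $\textnormal{ran}(h)=h(X)$ must then be finite. But $h$ is a bijection of $X$ onto itself, so $h(X)=X$; thus $X$ would be finite, contradicting the hypothesis that $X$ is infinite. This contradiction shows no autohomeomorphism can be a $P$-point.

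The entire argument is therefore a one-line appeal to surjectivity together with the finite-range characterisation, and I do not anticipate any genuine obstacle. The only point worth a moment's care is that the characterisation requires $X$ to be Hausdorff (for the step in Theorem~\ref{WhatarePfunctions} invoking \cite[4.3]{Rudin} that an infinite compact Hausdorff space contains non-$P$-points), whereas the corollary as stated asks only for $X$ to be zero-dimensional compact. Since zero-dimensional spaces in this paper are understood to be Hausdorff (the standing convention throughout, e.g.\ in Lemma~\ref{subbasiclemma} and Theorem~\ref{bigintersectionresult}), this causes no difficulty, and the corollary follows directly.
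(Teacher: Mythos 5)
Your proposal is correct and is exactly the argument the paper intends: the corollary is stated with a \qed because it follows immediately from Theorem~\ref{WhatarePfunctions}, since an autohomeomorphism of an infinite $X$ is surjective and thus has infinite range, so it cannot satisfy the finite-range characterisation of $P$-points in $\CkX$. Your side remark about the Hausdorff hypothesis is also well taken and consistent with the paper's standing conventions.
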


\begin{mythm}
\label{consindepententPpoints}
It is consistent with and independent of ZFC whether the space $\Cwstar$ contains $P$-points.
\end{mythm}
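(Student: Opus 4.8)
The plan is to deduce the statement directly from the characterisation of $P$-points obtained in Theorem~\ref{WhatarePfunctions}, thereby reducing the question about $\Cwstar$ to the classical question of whether $\wstar$ itself contains $P$-points, whose independence is already recorded in the literature cited in this section.

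First I would observe that, since $\wstar$ is a zero-dimensional compact Hausdorff space, Theorem~\ref{WhatarePfunctions} applies verbatim: the $P$-points of $\Cwstar$ are precisely those continuous self-maps of $\wstar$ that assume finitely many values, each of which is a $P$-point of $\wstar$. The key step is then to upgrade this description into the clean equivalence that $\Cwstar$ contains a $P$-point if and only if $\wstar$ does. For the forward direction, if $f \in \Cwstar$ is a $P$-point then its range is non-empty and, by the characterisation, consists of $P$-points of $\wstar$; so $\wstar$ has a $P$-point. For the reverse direction, if $p \in \wstar$ is a $P$-point, then the constant map $f_p$ has range $\singleton{p}$, a single $P$-point, and is therefore a $P$-point of $\Cwstar$ --- this is exactly Observation~\ref{Ppointsexist}, or equivalently a one-line instance of Theorem~\ref{WhatarePfunctions}.

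Having established this equivalence, the theorem follows by importing the well-known independence of the existence of $P$-points in $\wstar$ recalled at the start of this section: under CH, Rudin's construction produces a $P$-point in $\wstar$ (and more generally $P$-points exist under MA$+\neg$CH), while Shelah has shown the non-existence of $P$-points in $\wstar$ to be consistent with ZFC. Transporting both of these conclusions through the equivalence of the previous paragraph shows that whether $\Cwstar$ contains a $P$-point is likewise undecidable in ZFC.

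I do not expect a genuine obstacle here, since all the substantive work has been absorbed into Theorem~\ref{WhatarePfunctions}. The only points requiring care are that the reduction be carried out as an equivalence in \emph{both} directions, and that the imported independence results for $\wstar$ be invoked for both halves, namely the existence of $P$-points under CH and the consistent non-existence established by Shelah.
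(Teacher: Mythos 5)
Your proposal is correct and follows exactly the paper's argument: the paper likewise deduces from Theorem~\ref{WhatarePfunctions} that $\Cwstar$ contains $P$-points if and only if $\wstar$ does, and then cites the well-known independence of the latter from ZFC. The only difference is that you spell out both directions of the equivalence (via the range of a $P$-point map and via constant maps $f_p$), which the paper leaves implicit.
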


\begin{proof}
It follows from Theorem~\ref{WhatarePfunctions} that $\Cwstar$ contains $P$-points if and only if $\wstar$ contains $P$-points. The latter statement is well known to be consistent with and independent of ZFC (see \cite{Intro}).
 \end{proof}

\subsection*{\texorpdfstring{Weak $P$-points in $\CkX$}{Weak P-points}}
We now show that even though autohomeomorphisms are never $P$-points, they are weak $P$-points in $\Cwstar$. The result generalises to show that for compact zero-dimensional nowhere c.c.c.\ $F$-spaces, all autohomeomorphisms of $X$ are weak $P$-points in $\CkX$.

Note that the analogue of Theorem~\ref{projectPpoints} fails for weak $P$-points: indeed, weak $P$-points are preserved by a continuous open function only if that function has countable fibers, an assumption which does not hold for $\eval$.

\begin{mylem}
\label{Fspace}
Let $X$ be an $F$-space. If $A$ and $B$ are countable subsets of $X$ such that $\closure{A} \cap B = \emptyset = A \cap \closure{B}$, then $A$ and $B$ have disjoint closures. Moreover, if $X$ is compact zero-dimensional, there is a clopen subset $C \subset X$ such that $A \subset C \subset X \setminus B$.
\end{mylem}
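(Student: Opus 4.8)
The plan is to first manufacture two \emph{disjoint} cozero sets $P \supseteq A$ and $Q \supseteq B$, then invoke the $F$-space property to conclude they have disjoint closures; the compact zero-dimensional clause will follow by a routine clopen-separation argument. The key enabling fact (valid in any Tychonoff space) is that whenever a point $x$ lies outside a closed set $K$, one can choose a cozero set $W$ with $x \in W$ and $\overline{W} \cap K = \emptyset$: take $W = f^{-1}([0,1/2))$ for a continuous $f \colon X \to [0,1]$ with $f(x)=0$ and $f(K)=\{1\}$, noting $\overline{W} \subseteq f^{-1}([0,1/2])$.

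Concretely I would enumerate $A = \{a_n : n \in \w\}$ and $B = \{b_n : n \in \w\}$ and recursively build cozero neighbourhoods $P_n \ni a_n$ and $Q_n \ni b_n$, processing the points in the order $a_0, b_0, a_1, b_1, \dots$. The recursion maintains the invariants $\overline{P_i} \cap \overline{B} = \emptyset$ and $\overline{Q_j} \cap \overline{A} = \emptyset$ for all indices committed so far, together with $P_i \cap Q_j = \emptyset$. At the step for $a_n$, the hypothesis $a_n \notin \overline{B}$ and the invariant $\overline{Q_j} \cap \overline{A} = \emptyset$ (which gives $a_n \notin \overline{Q_j}$ for $j<n$) ensure $a_n$ avoids the closed set $\overline{B} \cup \bigcup_{j<n} \overline{Q_j}$, so the displayed fact lets me pick $P_n$ with $\overline{P_n}$ disjoint from it; the step for $b_n$ is symmetric, using $b_n \notin \overline{A}$ and $\overline{P_i} \cap \overline{B} = \emptyset$ to avoid $\overline{A} \cup \bigcup_{i \le n} \overline{P_i}$. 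Setting $P = \bigcup_n P_n$ and $Q = \bigcup_n Q_n$, which are cozero as countable unions of cozero sets, yields $A \subseteq P$, $B \subseteq Q$, and $P \cap Q = \bigcup_{i,j} P_i \cap Q_j = \emptyset$.

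Since $P$ and $Q$ are disjoint cozero, hence open $F_\sigma$, sets in the $F$-space $X$, the $F$-space property listed in Section~2 gives $\overline{P} \cap \overline{Q} = \emptyset$; as $\overline{A} \subseteq \overline{P}$ and $\overline{B} \subseteq \overline{Q}$, this proves $\overline{A} \cap \overline{B} = \emptyset$. For the moreover clause, $\overline{A}$ and $\overline{B}$ are now disjoint compact subsets of the compact zero-dimensional Hausdorff space $X$. Covering $\overline{A}$ by clopen sets each disjoint from the closed set $\overline{B}$ (available since clopen sets form a base) and extracting a finite subcover by compactness produces a clopen $C$ with $\overline{A} \subseteq C \subseteq X \setminus \overline{B}$, whence $A \subseteq C \subseteq X \setminus B$.

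The only genuine obstacle is that the closure of a \emph{countable} union of cozero sets need not equal the union of the closures, so one cannot simply pick a cozero neighbourhood of each point independently and hope the two unions are disjoint. The recursion sidesteps this precisely because at every stage only finitely many sets from the opposite side have been committed to, and for finite unions closure does commute with union; thus the point under consideration really does avoid the finite closed obstacle set, which is exactly what is needed to preserve the two invariants.
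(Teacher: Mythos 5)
Your proof is correct and takes essentially the same route as the paper's: enclose $A$ and $B$ in disjoint cozero sets, invoke the $F$-space property (disjoint cozero sets are completely separated, hence have disjoint closures), and then obtain the clopen $C$ by a standard compactness and zero-dimensionality argument. The only difference is one of detail: the paper asserts in a single line that the hypothesis $\closure{A} \cap B = \emptyset = A \cap \closure{B}$ yields disjoint cozero sets around $A$ and $B$, whereas you supply the (standard, and correctly executed) back-and-forth recursion proving it.
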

\begin{proof}
The condition $\closure{A} \cap B = \emptyset = A \cap \closure{B}$ implies that $A$ and $B$ are contained in disjoint cozero sets. By the $F$-space property, $A$ and $B$  have disjoint closures. A straightforward application of zero-dimensionality and compactness yields the clopen set $C$.
\end{proof}

\begin{mylem}[Convergence Lemma]
\label{fundamental}
Let $X$ be a compact $F$-space. If for $f$ and $f_n \in \CkX$ ($n \in \w$) there are points $P = \set{p_n}:{n \in \w}$ such that for $A=f(P)$ and $B = \set{f_n(p_n)}:{n \in \w}$ we have $\closure{A} \cap B = \emptyset = A \cap \closure{B}$ then $f \notin \closure{\set{f_n}:{n \in \w}}$.
\end{mylem}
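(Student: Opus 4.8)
The plan is to exhibit a single subbasic open neighbourhood of $f$ in the compact-open topology that contains none of the $f_n$. The natural candidate is $\CompactOpen{\closure{P}}{U}$, where $\closure{P}$ is the closure of the point set $P$ (compact, since $X$ is compact) and $U$ is a suitably chosen open set separating the relevant images.

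First I would feed the hypothesis directly into Lemma~\ref{Fspace}: since $X$ is an $F$-space and $A = f(P)$, $B = \set{f_n(p_n)}:{n \in \w}$ are countable sets with $\closure{A} \cap B = \emptyset = A \cap \closure{B}$, the lemma yields $\closure{A} \cap \closure{B} = \emptyset$. This is the crucial step, and the only place the $F$-space property enters: for an arbitrary compact Hausdorff $X$, two countable sets satisfying the crossing condition need not have disjoint closures, so everything hinges on Lemma~\ref{Fspace}. Because $X$ is compact Hausdorff, hence normal, I can then fix an open set $U$ with $\closure{A} \subseteq U$ and $U \cap \closure{B} = \emptyset$.

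Next I would verify the two membership claims. For $f$: continuity gives $f(\closure{P}) \subseteq \closure{f(P)} = \closure{A} \subseteq U$, so $f \in \CompactOpen{\closure{P}}{U}$. For each $f_n$: the point $p_n$ lies in $\closure{P}$ and $f_n(p_n) \in B \subseteq \closure{B}$, whence $f_n(p_n) \notin U$; thus $f_n(\closure{P}) \not\subseteq U$, i.e. $f_n \notin \CompactOpen{\closure{P}}{U}$. Therefore $\CompactOpen{\closure{P}}{U}$ is an open neighbourhood of $f$ that is disjoint from $\set{f_n}:{n \in \w}$, which is precisely the assertion $f \notin \closure{\set{f_n}:{n \in \w}}$.

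The argument is short; the genuine content is the disjoint-closures fact supplied by the $F$-space hypothesis through Lemma~\ref{Fspace}, after which everything is a routine unwinding of the definition of the compact-open topology. I expect the only points requiring care to be that I need merely $f(\closure{P}) \subseteq U$ (so the forward inclusion $f(\closure{P}) \subseteq \closure{f(P)}$ coming from continuity is enough, and equality with $\closure{A}$ is not needed), and that the \emph{single} compact set $\closure{P}$ defeats every $f_n$ simultaneously precisely because each witness $p_n$ already belongs to it.
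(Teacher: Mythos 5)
Your proof is correct and follows essentially the same route as the paper: Lemma~\ref{Fspace} gives $\closure{A} \cap \closure{B} = \emptyset$, and then a single compact-open set with compact part $\closure{P}$ separates $f$ from all the $f_n$ at once. The only (harmless) difference is that the paper dispenses with the normality step by taking $U = X \setminus \closure{B}$ directly, i.e.\ the neighbourhood $\CompactOpen{\closure{P}}{X \setminus \closure{B}}$.
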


\begin{proof}
By the previous lemma, we have $\closure{A} \cap \closure{B} = \emptyset$. It follows that $[\closure{P},X \setminus \closure{B}]$ is a neighbourhood of $f$ disjoint from $\set{f_n}:{n \in \w}$.
\end{proof}

\begin{mythm}
\label{autoweakPpoints}
Let $X$ be a compact Hausdorff, nowhere c.c.c.\ $F$-space. Then every open finite-to-one map is a weak $P$-point in $\CkX$.
\end{mythm}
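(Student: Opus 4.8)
The plan is to deduce the statement from the Convergence Lemma (Lemma~\ref{fundamental}). Fix an open finite-to-one map $g \in \CkX$ and a countable set $\{f_n : n \in \omega\} \subseteq \CkX \setminus \{g\}$; I must produce points $P = \{p_n : n \in \omega\}$ so that $A = g(P)$ and $B = \{f_n(p_n) : n \in \omega\}$ satisfy $\overline{A} \cap B = \emptyset = A \cap \overline{B}$, for then Lemma~\ref{fundamental} gives $g \notin \overline{\{f_n : n \in \omega\}}$, which is exactly the assertion that $g$ is a weak $P$-point in $\CkX$. Since each $f_n \neq g$, the set $G_n = \{x \in X : f_n(x) \neq g(x)\}$ is nonempty and open, and every admissible $p_n$ will be chosen from $G_n$; having fixed $p_n$, Hausdorffness records that $g(p_n) \neq f_n(p_n)$.

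Two structural facts drive the construction. First, because $X$ is nowhere c.c.c., every countable subset $C$ of $X$ has nowhere dense closure: if $\overline{C}$ contained a nonempty open set $O$, then an uncountable pairwise disjoint family of open subsets of $O$ (which exists by nowhere c.c.c.) would each meet $C$, forcing $C$ to be uncountable. In particular the eventual sets $\overline{A}$ and $\overline{B}$ will be nowhere dense. Second, since $g$ is open, the preimage $g^{-1}(N)$ of any nowhere dense closed set $N$ is again nowhere dense, for otherwise $g$ would carry a nonempty open subset of $g^{-1}(N)$ onto a nonempty open subset of $N$. Together these facts say that the ``$g$-directed'' requirements I need amount to avoidance of nowhere dense sets.

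I would then build $\{p_n\}$ by recursion, maintaining reservations that secure the two separation conditions. To force $B \cap \overline{A} = \emptyset$, I reserve at stage $n$ a small open neighbourhood $N_n \ni f_n(p_n)$ disjoint from the finitely many values $g(p_i)$ ($i \le n$) chosen so far, and promise that every later $g(p_k)$ avoids $N_n$; since $g$ is open I may take $\overline{N_n}$ nowhere dense, so the region $g^{-1}(\overline{N_n})$ forbidden to future points is nowhere dense and the promise is sustainable. Dually, to force $A \cap \overline{B} = \emptyset$, I reserve $M_n \ni g(p_n)$ and promise that every later $f_k(p_k)$ avoids $M_n$. At stage $n$ the point $p_n$ must therefore be picked in $G_n$, outside $g^{-1}\big(\bigcup_{i<n}\overline{N_i}\big)$ and outside $f_n^{-1}\big(\bigcup_{i<n} M_i\big)$; the first excluded set is nowhere dense by the facts above, so the entire difficulty concentrates on the second.

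The main obstacle is precisely this asymmetry: the conditions steered through $g$ are controlled for free by openness, but the maps $f_n$ need not be open, so $f_n^{-1}\big(\bigcup_{i<n} M_i\big)$ need not be nowhere dense and could in principle swallow all of $G_n$. This is where I expect the finite-to-one hypothesis, together with the full strength of nowhere c.c.c., to enter. Using that $G_n$ carries an uncountable pairwise disjoint family of open sets whose $g$-images are point-finite (a consequence of $g$ being finite-to-one, since each point of $X$ has only finitely many preimages), one obtains uncountably many candidate values for $g(p_n)$, and hence enough room to select $p_n$ so that $f_n(p_n)$ escapes the finitely many reserved neighbourhoods $M_i$ while $g(p_n)$ still avoids the nowhere dense $g$-side obstructions. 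Making this ``enough room'' step rigorous — guaranteeing that the candidate set is nonempty at every stage despite the $f_n$ not being open — is the crux; once it is established, the recursion delivers $P$ with the required separation and Lemma~\ref{fundamental} completes the proof.
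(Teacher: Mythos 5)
There is a genuine gap here, and one outright false step. The false step is your claim that, because $g$ is open, you ``may take $\overline{N_n}$ nowhere dense'': $N_n$ is by your own construction a non-empty \emph{open} neighbourhood of $f_n(p_n)$, so $\overline{N_n}$ contains the non-empty open set $N_n$ and can never be nowhere dense. Consequently the regions forbidden to future points on the $g$-side are the \emph{open} sets $g^{-1}(N_i)$, and nothing rules out $G_k \subseteq g^{-1}\bigl(\bigcup_{i<k} N_i\bigr)$; so even the half of the bookkeeping you regard as free already breaks down. The crux you flag on the $f$-side fails for the same structural reason: $f_n^{-1}\bigl(\bigcup_{i<n} M_i\bigr)$ is open and can contain all of $G_n$ (take $f_n$ constant with value in $M_0 \setminus \{g(p_0)\}$; then $G_n$ is the complement of a finite fibre of $g$, yet every admissible $p_n$ has $f_n(p_n) \in M_0$), and your suggested remedy --- uncountably many candidate values for $g(p_n)$ from a disjoint family with point-finite $g$-images --- constrains $g(p_n)$, not $f_n(p_n)$, so it creates no room where room is needed. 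More fundamentally, finitary neighbourhood reservations cannot secure $B \cap \overline{A} = \emptyset$ at all: $A$ is a countable (typically discrete) set whose closure in a compact $F$-space contains a copy of $\beta\omega$, and $f_n(p_n)$ must be kept out of remainder points of $\overline{A}$ that are limits of choices not yet made; a promise made at stage $n$ about the finitely many existing points does not control this.

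The paper's proof shows what is actually needed, and it is precisely the machinery your sketch omits. There one chooses the points $p_n$ (called $x_l$ there) to be \emph{weak $P$-points} of $X$, which are dense by the Dow--van Mill Theorem~\ref{ExistenceWeakPPoints} applied to regular closed subsets --- this is where nowhere c.c.c.\ and the $F$-space property really enter. The hypotheses ``open and finite-to-one'' on $g$ are used exactly to push weak $P$-points forward (if $x$ is a weak $P$-point and $C$ is countable with $g(x)\notin C$, then $g^{-1}(C)$ is countable, $x\notin\overline{g^{-1}(C)}$, and the open image of a witnessing neighbourhood separates $g(x)$ from $C$), so that $A = g(P)$ consists of weak $P$-points and $A \cap \overline{B} = \emptyset$ follows automatically from $A \cap B = \emptyset$. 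The hard inclusion $B \cap \overline{A} = \emptyset$ is then handled not by reservations but by the auxiliary sets $Y_m$: whenever $f_m(p_m)$ lies in the closure of a countable set of weak $P$-points, one fixes such a $Y_m$ disjoint from all $g(p_l)$; if later $f_m(p_m) \in \overline{A}$, then $Y_m$ and $A$ are disjoint countable sets of weak $P$-points with $\overline{Y_m} \cap A = \emptyset = Y_m \cap \overline{A}$, so Lemma~\ref{Fspace} forces $\overline{Y_m} \cap \overline{A} = \emptyset$, a contradiction. Nonemptiness at each stage of the recursion is guaranteed by the countable set $E$ of finite images $f_n(\{f_n \neq g\})$ together with the fact (which you did prove correctly) that countable sets are nowhere dense, with finite-to-one-ness of $g$ keeping the relevant $g$-preimages countable. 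Your reduction to the Convergence Lemma~\ref{fundamental} and your two preliminary facts match the paper, but without the weak $P$-point selection and the $Y_m$ device the recursion cannot be completed, and the reservation scheme as proposed cannot be repaired.
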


\begin{proof}
Let $X$ be a compact nowhere c.c.c.\ $F$-space. Since regular closed sets, i.e.\ sets of the form $\closure{U}$ for $U \subseteq X$ open, inherit these three properties, Theorem~\ref{ExistenceWeakPPoints} implies that such a space contains a dense set of weak $P$-points. Moreover, note that nowhere c.c.c.\ implies that countable subsets of $X$ are nowhere dense. 

Now let $f \in \CkX$ be an open map with finite fibres and let $\set{f_l}:{l \in \w} \subseteq \CkX \setminus \singleton{f}$ be an arbitrary countable collection of functions. We show that $f$ does not lie in the closure of $\set{f_l}:{l \in \w}$. 

For every $n$ let us consider $\Set{f_n\neq f} = \set{x \in X}:{f_n(x) \neq f(x)}$, a non-empty open subset of $X$. Let $F = \set{n \in \w}:{\cardinality{f_n(\Set{f_n\neq f} )} < \infty}$ and define 
$$E = \bigcup_{n \in F} f_n(\Set{f_n\neq f} ),$$ a countable, and hence nowhere dense subset of $X$.

By recursion on $l$, we pick weak $P$-points $\set{x_l}:{l \in \w}$ of $X$ (not necessarily faithfully indexed) such that 
\begin{enumerate}
\item $\set{f(x_l)}:{l \in \w} \cap \set{f_l(x_l)}:{l \in \w} = \emptyset$, and
\item $\set{f(x_l)}:{l \in \w} \cap E= \emptyset$.
\end{enumerate}
Moreover, whenever $f_m(x_m)$ lies in the boundary of a countable set of weak $P$-points of $X$, we will pick such a set $Y_m$, making sure that 
\begin{enumerate} \setcounter{enumi}{2}
\item $Y_m \cap \set{f(x_l)}:{l \in \w}  = \emptyset$ for all $m \in \w$. 
\end{enumerate}
This implies the result: letting 
$$A = \set{f(x_l)}:{l \in \w} \quad \textnormal{and} \quad B= \set{f_l(x_l)}:{l \in \w},$$
we see that since by assumption on $f$ all points in $A$ are weak $P$-points of $X$, requirement $(1)$ implies $A \cap \closure{ B}=  \emptyset$. Next, we claim (1) and (3) imply $\closure{A} \cap B= \emptyset$. To see this, suppose for a contradiction that $f_m(x_m) \in \closure{A}$. Clearly, then, the point $f_m(x_m)$ lies in the closure of a countable set of weak $P$-points, so condition (3) applies. But then the disjoint sets of weak $P$-points $Y_m$ and $A$ of $X$ do not have disjoint closures, contradicting Lemma~\ref{Fspace}. This establishes the claim. 

Thus,  we have
$$\closure{A} \cap B = \emptyset = A\cap \closure{ B},$$
and hence the result follows from the Convergence Lemma~\ref{fundamental}. 

It remains to describe the recursive construction. To begin, note that since $f$ has finite fibres, the set $f^{-1}(E)$ is countable and hence nowhere dense, so we may pick a weak $P$-point $x_0$ in $ \Set{f_0 \neq f} \setminus f^{-1}(E)$. Moreover, if possible we fix a countable set $Y_0$ of weak $P$-points different from $x_0$ such that $f_0(x_0)$ lies in the closure of $Y_0$. Otherwise, put $Y_0=\emptyset$.

For the inductive step, assume that the construction has been carried out successfully up to some $n \in w$, i.e.\ we have weak $P$-points $\set{x_l}:{l \leq n}$ and countable collections $Y_l$ of weak $P$-points for $l \leq n$ such that 
\begin{enumerate}
\item $\set{f(x_l)}:{l \leq n} \cap \set{f_l(x_l)}:{l \leq n} = \emptyset$, 
\item $\set{f(x_l)}:{l \leq n} \cap E= \emptyset$, and also
\item $Y_m \cap \set{f(x_l)}:{l \leq n} = \emptyset$ for all $m \leq n$. 
\end{enumerate}
We have to choose a point $x_{n+1}$ satisfying requirements (1) -- (3). Note that
$$\Set{f_{n+1} \neq f} \setminus f^{-1}_{n+1}(\set{f(x_l)}:{l \leq n}) \neq \emptyset.$$
This holds, as otherwise $f_{n+1}(\Set{f_{n+1} \neq f} )$ is contained in the finite set $\set{f(x_l)}:{l \leq n}$, yielding $\set{f(x_l)}:{l \leq n} \subset E$, and hence contradicting our induction assumption (2). 

Thus, the set $\Set{f_{n+1} \neq f} \setminus f^{-1}_{n+1}(\set{f(x_l)}:{l \leq n})$ is a non-empty open set and hence, since countable sets are nowhere dense, we can find a weak $P$-point
$$x_{n+1}  \in \Set{f_{n+1} \neq f} \setminus \p{f^{-1}_{n+1}(\set{f(x_l)}:{l \leq n}) \cup f^{-1}\p{\set{f_l(x_l)}:{l \leq n} \cup E} \cup \bigcup_{l \leq n} Y_l }.$$ This choice satisfies (1) and (2). Finally, if $f_{n+1}(x_{n+1})$ lies in the closure of a countable set of weak $P$-points, fix any such countable set $Y_{n+1}$ with $Y_{n+1} \cap \set{f(x_l)}:{l \leq n+1} = \emptyset$. Otherwise, put $Y_{n+1}=\emptyset$. This satisfies (3) and completes the recursion and proof.
\end{proof}

Note that in a compact $F$-space, infinite closed subsets contain a copy of $\beta \w$ and therefore have cardinality at least $2^\cont$. Thus, every self-map on such a space with fibres of size $< 2^\cont$ has finite fibres.

\begin{mycor}
\label{denseweakPpoints}
For a Parovi\v{c}enko space $X$ the function space $\CkX$ contains a dense set of weak $P$-points. In particular, the extensions of injective maps on $\w$ form a dense set of weak $P$-points in $\Cwstar$.
\end{mycor}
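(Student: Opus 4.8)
The plan is to realize the claimed dense set of weak $P$-points as open finite-to-one self-maps and invoke Theorem~\ref{autoweakPpoints}. Concretely, I would first verify that a Parovi\v{c}enko space $X$ meets the hypotheses of that theorem, i.e.\ that it is a compact Hausdorff nowhere c.c.c.\ $F$-space, and then show that open finite-to-one maps lie densely in $\CkX$. Since $X$ is by definition a compact zero-dimensional crowded $F$-space with the $G_\delta$-property, only the nowhere c.c.c.\ property needs an argument.

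For nowhere c.c.c., note that clopen subsets of $X$ inherit all the defining properties and that c.c.c.\ passes to open subspaces; hence it suffices to show that no non-empty clopen $C \subseteq X$ is c.c.c., and since each such $C$ is again Parovi\v{c}enko, this reduces to showing that a Parovi\v{c}enko space is never c.c.c. For the latter I would build a Cantor scheme $\set{C_s}:{s \in 2^{<\w}}$ of non-empty clopen sets with $C_s = C_{s0} \sqcup C_{s1}$, which is possible as $X$ is crowded and zero-dimensional. For each branch $b \in 2^\w$ the intersection $\bigcap_n C_{b\restriction n}$ is a non-empty $G_\delta$, so by the $G_\delta$-property it contains a non-empty open set $U_b$; distinct branches split at some finite level and hence give disjoint $U_b$. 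This produces $\continuum$-many pairwise disjoint non-empty open sets, witnessing that $X$ is not c.c.c.

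With Theorem~\ref{autoweakPpoints} then applicable, it remains to show that open finite-to-one self-maps are dense. Using the basis of Theorem~\ref{bigintersectionresult}, I would take a basic open set $\bigcap_{i=0}^n [A_i,B_i]$ with the $A_i$ pairwise disjoint compact clopen and the $B_i$ non-empty clopen, and build $f$ piecewise: on each $A_i$ an open finite-to-one map $g_i \colon A_i \to B_i$ with clopen image, on $X \setminus \bigcup_i A_i$ any open finite-to-one map (e.g.\ a homeomorphism onto a clopen set), and let $f$ be their union. Such an $f$ is continuous, open, and finite-to-one (finitely many clopen pieces, each with finite fibres) and lies in the basic open set. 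For $\wstar$ this is immediate, since every clopen subset is homeomorphic to $\wstar$, so each $A_i$ maps homeomorphically onto a clopen subset of $B_i$; alternatively, and in keeping with Corollary~\ref{thm:FT1dense}, I would note that for an injection $\phi \colon \w \to \w$ the map $\beta\phi\restriction\wstar$ is a homeomorphism onto the clopen set $(\mathrm{ran}\,\phi)^*$, hence open and finite-to-one, and that these are already dense by Corollary~\ref{thm:FT1dense}. This settles the ``in particular'' statement.

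The main obstacle is the piecewise step for a \emph{general} Parovi\v{c}enko space: producing an open finite-to-one map from one clopen piece $A_i$ into another $B_i$. Each of $A_i, B_i$ is again Parovi\v{c}enko, but they need not be homeomorphic (consistently, non-homeomorphic Parovi\v{c}enko spaces exist, and their clopen sum is a single Parovi\v{c}enko space with non-homeomorphic clopen pieces), so one cannot simply use a homeomorphism as for $\wstar$. I would attempt to construct $g_i$ as a transfinite inverse limit of maps between finite clopen partitions of $A_i$ and of $B_i$, refining both partition sequences until they generate the respective clopen algebras, while maintaining at every stage both a fixed finite bound on the number of $A_i$-pieces lying over each $B_i$-piece (securing finiteness of the fibres) and a local surjectivity condition on the partition maps, namely that whenever a target piece splits each of its children is hit (securing openness). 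Verifying that these two requirements stay compatible through all successor and limit stages, for arbitrary clopen Parovi\v{c}enko pieces, is exactly the delicate point; for $\wstar$ the homogeneity of $\mathcal{P}(\w)/\mathrm{fin}$ trivializes it, and this is where the general argument must do its real work.
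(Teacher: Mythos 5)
Your handling of the $\wstar$ case is correct and coincides with the paper's own argument: for injective $\phi\colon\w\to\w$, the map $\beta\phi$ restricts to a homeomorphism of $\wstar$ onto the clopen set $(\mathrm{ran}\,\phi)^*$, hence is open and finite-to-one, so Corollary~\ref{thm:FT1dense} plus Theorem~\ref{autoweakPpoints} settle the ``in particular'' statement. Your Cantor-scheme verification that a Parovi\v{c}enko space is nowhere c.c.c.\ (branch intersections are non-empty $G_\delta$'s, hence have interior, giving $\cont$ disjoint open sets in every clopen subset) is also correct, and it supplies a hypothesis of Theorem~\ref{autoweakPpoints} that the paper's two-line proof leaves implicit. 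The gap is exactly where you flag it, but it is worse than ``delicate'': under the paper's definition of a Parovi\v{c}enko space (which imposes no weight restriction), open finite-to-one maps are provably \emph{not} dense in $\CkX$ in general, so no amount of inverse-limit bookkeeping can close the gap. Take $X = \wstar \oplus Y$ with $Y = (\w\times 2^{\cont^+})^*$; both summands are remainders of zero-dimensional locally compact $\sigma$-compact spaces, so $X$ is Parovi\v{c}enko. Every non-empty open subset of $Y$ contains a non-empty clopen set of the form $E^*$ with $E \cong \w\times 2^{\cont^+}$, and such an $E^*$ maps continuously onto $2^{\cont^+}$ (map the $n$-th copy of $2^{\cont^+}$ homeomorphically onto $2^{\cont^+}$, extend over $\beta E$, and note each fibre in $E$ is infinite, closed and discrete, so its closure meets $E^*$); hence every non-empty open subset of $Y$ has weight at least $\cont^+$. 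On the other hand, an open finite-to-one map between compact Hausdorff spaces is a homeomorphism on some non-empty open set: since the map is open, $\{y : |f^{-1}(y)|\geq n\}$ is open (separate $n$ fibre points by disjoint open sets and intersect their images), so by Baire category the multiplicity is constant on some non-empty open set, and an exactly-$k$-to-one open perfect map is a local homeomorphism there. Thus for $A\subseteq Y$ and $B\subseteq \wstar$ non-empty clopen, any open finite-to-one $f \in [A,B]$ would produce a non-empty open $V\subseteq A$ homeomorphic to an open subset of $\wstar$, of weight at most $\cont$ --- a contradiction. So the basic open set $[A,B]$ contains no open finite-to-one self-map at all. (Even if one adds ``weight $\cont$'' to the definition, your sketch provides no ZFC construction of the required maps $A_i\to B_i$.)

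The general statement is nonetheless true, but the dense set of weak $P$-points must be chosen differently, namely as \emph{locally constant} maps --- a route your proposal passes over (and which, to be fair, the paper's proof, citing only the $\wstar$-specific Corollary~\ref{thm:FT1dense}, does not spell out either). Given a non-empty basic set $\bigcap_{i\leq n}[A_i,B_i]$ with the $A_i$ pairwise disjoint clopen (Theorem~\ref{bigintersectionresult}; add $A_{n+1}=X\setminus\bigcup_{i\leq n}A_i$ with $B_{n+1}=X$ to get a partition), pick a weak $P$-point $y_i\in B_i$ for each $i$ --- these exist densely in $X$ by Theorem~\ref{ExistenceWeakPPoints} applied to clopen subsets, using your nowhere c.c.c.\ verification --- and let $f$ be constantly $y_i$ on $A_i$. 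Since $\{A_i\}$ is a finite clopen partition, $\CkX \cong \prod_i C_k(A_i,X)$; the proof of Observation~\ref{Ppointsexist} shows verbatim that each constant map with value $y_i$ is a weak $P$-point of $C_k(A_i,X)$; and a finite product of weak $P$-points is a weak $P$-point (split a countable set avoiding the point according to the first coordinate in which its elements differ from the point, and project). Hence $f$ is a weak $P$-point of $\CkX$ lying in the given basic set, which proves density without ever needing open finite-to-one maps between possibly non-homeomorphic clopen pieces.
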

\begin{proof}
Combine~\ref{bigintersectionresult},~\ref{thm:FT1dense} and~\ref{autoweakPpoints}. Note that injective functions on $\w$ extend to injective functions on $\wstar$. 
\end{proof}

\section{\texorpdfstring{Convergent sequences in $\CkX$}{Convergent sequences}}

As a compact $F$-space, the space $\wstar$ does not contain non-trivial convergent sequences. Even though $\Cwstar$ is not an $F$-space (Theorem~\ref{thm:CkNotFspace}), we establish that $\Cwstar$ does not contain non-trivial convergent sequences (Theorem~\ref{noconvergentsequences}).

\begin{mylem}
\label{richardmaxcombined}
Let $X$ be a compact space and suppose $f_n \convergesTo f$ in $\CkX$. For any collection $P = \set{p_n}:{n \in \omega}$ of points in $X$, and $A = \image{P}{f} = \set{f\of{p_n}}:{n \in \omega}$ and $B = \set{f_n\of{p_n}}:{n \in \omega}$ we have
\begin{enumerate}
\item $\closure{A} = \closure{\image{P}{f}} = \image{\closure{P}}{f}$,
\item $\accumulation{A} \subseteq \closure{B}$, and
\item $\accumulation{B} \subseteq \closure{A}$.
\end{enumerate}
\end{mylem}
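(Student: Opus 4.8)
The plan is to handle the three parts in turn: (1) is purely topological, while (2) and (3) are both driven by the same mechanism, namely converting the compact-open convergence $f_n \convergesTo f$ into uniform control of $f_n$ on a well-chosen compact set. Throughout I use that $X$ is compact Hausdorff (as for all spaces considered in this paper), so that compact sets are closed and $X$ is normal and regular.

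For (1), I would invoke only continuity and compactness. Since $\closure{P}$ is compact and $f$ continuous, $\image{\closure{P}}{f}$ is a compact, hence closed, set containing $A = \image{P}{f}$, giving $\closure{A} \subseteq \image{\closure{P}}{f}$; the reverse inclusion $\image{\closure{P}}{f} \subseteq \closure{\image{P}{f}} = \closure{A}$ is just the standard fact that $f(\closure{S}) \subseteq \closure{f(S)}$ for continuous $f$. This yields $\closure{A} = \image{\closure{P}}{f}$, which I then use freely in (2) and (3).

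For (2), take $y \in \accumulation{A}$. By (1) write $y = f(p)$ with $p \in \closure{P}$; since $y \notin A$ we have $p \notin P$, so by $T_1$-ness every neighbourhood of $p$ contains infinitely many $p_n$. Given an arbitrary open $U \ni y$, I would use regularity to choose open $V \ni p$ with $\image{\closure{V}}{f} \subseteq U$; then $\CompactOpen{\closure{V}}{U}$ is a compact-open neighbourhood of $f$, so $f_n \in \CompactOpen{\closure{V}}{U}$ for all large $n$. Picking such a large $n$ with $p_n \in V$ gives $f_n(p_n) \in U$, and since $U$ was arbitrary, $y \in \closure{B}$. For (3), take $y \in \accumulation{B}$ and argue by contradiction, assuming $y \notin \closure{A} = \image{\closure{P}}{f}$. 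As $\image{\closure{P}}{f}$ is compact and misses $y$, normality gives disjoint open $W \supseteq \image{\closure{P}}{f}$ and $U' \ni y$. Then $\CompactOpen{\closure{P}}{W}$ is a neighbourhood of $f$, so $f_n(\closure{P}) \subseteq W$ for all large $n$; since $p_n \in \closure{P}$, this forces $f_n(p_n) \in W$ and hence $f_n(p_n) \notin U'$ for all large $n$. Thus $U'$ meets $B$ only finitely, contradicting (via $y \notin B$ and $T_1$) that $y$ is an accumulation point of $B$.

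The main obstacle is the interaction between the moving point $p_n$ and the moving function $f_n$: compact-open convergence controls $f_n$ only uniformly over a fixed compact set, so the crux in both (2) and (3) is choosing that compact set ($\closure{V}$, respectively $\closure{P}$) so as to capture simultaneously the relevant $p_n$ and the image of $f$. Regularity and normality are precisely what allow me to fatten $f(p) \in U$ up to $\image{\closure{V}}{f} \subseteq U$ and to separate $y$ from the compact set $\image{\closure{P}}{f}$, respectively.
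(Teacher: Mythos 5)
Your proof is correct and takes essentially the same route as the paper: part (1) is identical, and part (3) is the paper's argument up to complementation (the paper separates via a neighbourhood $U$ of $y$ with $\closure{U}\cap\closure{A}=\emptyset$ and uses the neighbourhood $\CompactOpen{\closure{P}}{X\setminus\closure{U}}$ of $f$, where you use $\CompactOpen{\closure{P}}{W}$ with $W\supseteq f(\closure{P})$). The only difference is presentational, in (2): the paper invokes continuity of the evaluation map $\eval\colon \CkX\times X\to X$ and shows $\pair{f,p}$ lies in the closure of $\set{\pair{f_n,p_n}}:{n\in\w}$ in the product, whereas you inline precisely the argument underlying that continuity, shrinking to a compact neighbourhood $\closure{V}$ of $p$ with $f(\closure{V})\subseteq U$ and using the neighbourhood $\CompactOpen{\closure{V}}{U}$ of $f$ directly.
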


\begin{proof}
For $(1)$, note that $\image{\closure{P}}{f} \subseteq \closure{\image{P}{f}}$ by continuity of $f$, and we obtain equality because the image of $\closure{P}$ under $f$ must be compact and so closed.

For $(2)$, note that by continuity of $\eval$ (Lemma~\ref{evaliscts}), we have
\[ \image{\closure{\set{\pair{f_n, p_n}}:{n \in \omega}}}{\eval} \subseteq \closure{\image{\set{\pair{f_n, p_n}}:{n \in \omega}}{\eval}} = \closure{B}. \]

Now if $x \in \accumulation{A}$, then by (1) we can write $x = f(p)$ for $p \in \accumulation{P}$. We show $\pair{f, p} \in \closure{\set{\pair{f_n, p_n}}:{n \in \omega}}$, and hence, by the above, $x \in \closure{B}$. So let $U$ be an open neighbourhood of $f$ in $\CkX$, and let $V$ be an open neighbourhood of $p$ in $X$. Since $f_n \convergesTo f$, there is an $N \in \omega$ such that $\set{f_n}:{n \geq N} \subseteq U$. Since $p \in \closure{P}$, we have $P \intersection V$ is infinite, and hence must contain some $p_n$ for $n \geq N$. Then $\pair{f_n, p_n} \in U \times V$. This proves $\pair{f, p} \in \closure{\set{\pair{f_n, p_n}}:{n \in \omega}}$, and hence (2).

For $(3)$, suppose for a contradiction that there is $x \in \closure{B} \setminus \p{B \cup \closure{A}}$. Pick an open neighbourhood $U$ of $x$ such that $\closure{U} \cap \closure{A} = \emptyset$. Since $x \in \accumulation{B}$, the set $I = \set{n \in \w}:{f_n\of{p_n} \in U}$ must be infinite. Using $(1)$, we conclude that $f \in [\closure{P}, \wstar \setminus \closure{U}]$ but $f_n \notin [\closure{P}, \wstar \setminus \closure{U}]$ for all $n \in I$. This contradicts  $f_n \convergesTo f$.
\end{proof}

\begin{mylem}
\label{witnessingP}
Let $X$ be a compact space without non-trivial convergent sequences and let $f$ and $f_n$ (for $n \in \w$) be pairwise distinct functions in $\CkX$. Then there exists a subsequence $\set{f_{n_i}}:{i \in \w}$ and points $P = \set{p_i}:{i \in \w}$ such that $A = f(P)$ and $B = \set{f_{n_i}(p_i)}:{i \in \w}$ are disjoint.
\end{mylem}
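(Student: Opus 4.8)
The plan is to reduce the whole statement to a single clean case split, according to whether some point of $X$ is attained "too often" as a value on the sets where the functions differ. For each $n$ put $O_n = \{x \in X : f_n(x) \neq f(x)\}$. Since $X$ is Hausdorff this is open, and since $f_n \neq f$ it is non-empty. The idea is to build the desired configuration one coordinate at a time: at stage $i$ I choose an index $n_i$ and a point $p_i \in O_{n_i}$, so that $f(p_i) \neq f_{n_i}(p_i)$ is automatic on the diagonal, and the only thing left to control is that no off-diagonal collision $f(p_i) = f_{n_j}(p_j)$ occurs, i.e.\ that $A = \{f(p_i)\}$ and $B = \{f_{n_i}(p_i)\}$ stay disjoint.

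First I would dispatch two easy cases. If there is a point $c \in X$ with $c \in f_n(O_n)$ for infinitely many $n$, then along those $n$ I pick $p_i \in O_{n_i}$ with $f_{n_i}(p_i) = c$; this makes $B = \{c\}$ a single point, while $f(p_i) \neq c$ for every $i$ because $p_i \in O_{n_i}$, so $A \cap B = \emptyset$. Symmetrically, if some $c$ satisfies $c \in f(O_n)$ for infinitely many $n$, I choose $p_i \in O_{n_i}$ with $f(p_i) = c$, collapsing $A$ to $\{c\}$ and forcing $c \notin B$. In either case we are done immediately.

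The substantive case is when neither of these holds, that is, when for every $c$ both $\{n : c \in f_n(O_n)\}$ and $\{n : c \in f(O_n)\}$ are finite. The key observation is that then, for every finite set $S \subseteq X$, all but finitely many $n$ satisfy $f(O_n) \cap S = \emptyset$ and $f_n(O_n) \cap S = \emptyset$, since $\{n : f_n(O_n) \cap S \neq \emptyset\} = \bigcup_{c \in S}\{n : c \in f_n(O_n)\}$ is a finite union of finite sets, and likewise for $f$. This renders the recursion unobstructed: given disjoint finite sets $A_k = \{f(p_i) : i < k\}$ and $B_k = \{f_{n_i}(p_i) : i < k\}$, I set $S = A_k \cup B_k$, choose any $n_k > n_{k-1}$ lying outside the finite bad set for $S$, and take any $p_k \in O_{n_k}$; then automatically $f(p_k) \notin S$ and $f_{n_k}(p_k) \notin S$, while $f(p_k) \neq f_{n_k}(p_k)$, so $A_{k+1}$ and $B_{k+1}$ remain disjoint. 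Taking unions yields the required disjoint $A$ and $B$.

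The main obstacle, and really the crux of the argument, is the recognition that the only way the naive recursion could break down is that some value keeps recurring as an image on the $O_n$ and blocks the avoidance step — and this is precisely the scenario handled by the two singleton cases, where one wins at once by collapsing $A$ or $B$ to a point. It is worth remarking that the construction uses only the (Hausdorff) compactness of $X$ and the distinctness of the $f_n$ from $f$; the hypothesis that $X$ has no non-trivial convergent sequences does not seem to be needed for the disjointness conclusion itself, but rather enters in the downstream theorem that derives the absence of convergent sequences in $\CkX$ from this lemma.
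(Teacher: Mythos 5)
Your proof is correct, and it reorganizes the case analysis in a genuinely different (and somewhat cleaner) way than the paper. Your case (b) is the paper's Case 1 in disguise, since $c \in f(O_n)$ holds exactly when the fiber $f^{-1}(c)$ meets $O_n$; so both proofs begin by collapsing $A$ to a singleton when some value of $f$ recurs on the sets of disagreement. The divergence is in the remaining dichotomy: the paper, after extracting the condition $(\star)$, splits on whether the tail unions $E_k = \bigcup_{n \geq k} f_n(O_n)$ are eventually finite (its Case 2, where the finite set $E$ itself separates $B \subseteq E$ from $A \subseteq X \setminus E$) or all infinite (its Case 3, a recursion that at each step needs a small existence argument --- that some later $O_n$ is not contained in $f_n^{-1}(A)$ --- which is where the infinitude of the $E_k$ is consumed). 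Your pigeonhole dichotomy on a single recurring value $c \in f_n(O_n)$ is not the same split: a finite $E_k$ forces such a recurring $c$ by pigeonhole (each $f_n(O_n)$ is non-empty), so your case (a) absorbs the paper's Case 2 together with part of its Case 3, and in exchange your residual case carries the stronger hypothesis that every value recurs only finitely often on each side. That stronger hypothesis is what makes your recursion frictionless and symmetric in $f$ and the $f_n$: any $p_k \in O_{n_k}$ works once $n_k$ avoids a finite bad set, and no existence claim is needed. Your closing remark is also accurate and consistent with the paper's own argument: neither proof ever uses compactness or the absence of convergent sequences (nor even the openness of the $O_n$ --- only their non-emptiness, i.e.\ $f_n \neq f$); these hypotheses appear in the statement only because of how the lemma is consumed in Theorem~\ref{noconvergentsequences}.
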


\begin{proof}
We will differentiate between three cases.

\begin{mycase}
For some $F = f^{-1}(x)$, the functions $f_n\restriction{F}$ do not equal $f\restriction{F}$ eventually.
\end{mycase}

In this case, there is an infinite subsequence $\set{f_{n_i}}:{i \in \w}$ and $p_i \in F$ such that $f_{n_i}(p_i) \neq x$ and we are done.

Next, for $n \in \w$ let us consider $\Set{f_n\neq f} = \set{x \in X}:{f_n(x) \neq f(x)}$, a non-empty open set. Note that since we have dealt with Case 1, we may from now on assume that for all \emph{finite} $A \subset X$ we have $f^{-1}(A) \cap \Set{f_n\neq f} = \emptyset$ eventually $(\star)$. For the remaining two cases, consider the sets $E_k = \bigcup_{n \geq k} f_n(\Set{f_n\neq f} )$.

\begin{mycase}
Property $(\star)$ holds and the sets $E_k$ are eventually finite.
\end{mycase}
Let $E$ denote the first $E_k$ which is finite. By $(\star)$ there exists $N \geq k$ such that 
$$f^{-1}(E) \cap \Set{f_{n} \neq f} = \emptyset$$
 for all $n \geq N$, so pick points $p_n \in \Set{f_{n} \neq f}$ for each $n \geq N$. It follows that 
$$\set{f\of{p_n}}:{n \geq N} \cap \set{f_n\of{p_n}}:{n \geq N} \subseteq \p{X \setminus E} \cap E = \emptyset,$$
as desired. 

\begin{mycase}
Property $(\star)$ holds and all $E_k$ are infinite.
\end{mycase}

We use recursion to find an infinite subsequence $S=\set{n_l}:{l \in \w}$ and points $\set{p_l}:{l \in \w}$ such that $\set{f\of{p_l}}:{l \in \omega} \cap \set{f_{n_l}\of{p_l}}:{l \in \omega}=\emptyset$. To begin, pick any $p_0 \in \Set{f_0 \neq f}$. For the recursion step, assume we have found $\set{p_l}:{l \leq k}$ such that 
$$A = \set{f(p_l)}:{l \leq k} \quad \text{and} \quad B= \set{f_{n_l}(p_l)}:{l \leq k}$$
are disjoint. Again, by $(\star)$, there is $N \geq n_k$ such that 
$$f^{-1}(B) \cap \Set{f_n \neq f} = \emptyset \quad \text{for all } n \geq N.$$ Next, we claim that there is some $n > N$ such that 
$$\Set{f_n \neq f} \setminus f_n^{-1}(A) \neq \emptyset.$$
Indeed, otherwise we would have $f_n(\Set{f_n \neq f}) \subseteq A$ for all $n > N$, i.e. $E_k \subset A$ eventually, a contradiction. Let $n_{k+1} > N$ be such an $n$.

Now pick $p_{k+1} \in \Set{f_{n_{k+1}} \neq f} \setminus f^{-1}_{n_{k+1}}(A)$. We have $f(p_{k+1}) \notin B$, and therefore it follows that 
$$\set{f\of{p_l}}:{l \leq k+1} \cap \set{f_{n_l}\of{p_l}}:{l \leq k+1}=\emptyset$$
as desired. This completes the recursion. 
\end{proof}

\begin{mythm}
\label{noconvergentsequences}
The space of self-maps $\CkX$ of a compact $F$-space $X$ does not contain non-trivial convergent sequences.
\end{mythm}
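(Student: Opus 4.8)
The plan is to assume for contradiction that some non-trivial convergent sequence exists in $\CkX$, say $f_n \convergesTo f$ with all $f_n$ distinct from $f$ (and we may assume the $f_n$ are pairwise distinct, since a convergent sequence taking any value infinitely often would force that value to be the limit, making the sequence trivial). The two lemmas immediately preceding this theorem are precisely tailored to deliver the contradiction, so the proof should be a short assembly of them.

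First I would apply Lemma~\ref{witnessingP}: since a compact $F$-space contains no non-trivial convergent sequences (property (4) of $F$-spaces from Section~2), $X$ satisfies the hypotheses, so we extract a subsequence $\set{f_{n_i}}:{i \in \w}$ together with points $P = \set{p_i}:{i \in \w}$ such that $A = f(P)$ and $B = \set{f_{n_i}(p_i)}:{i \in \w}$ are \emph{disjoint}. Note that passing to a subsequence is harmless, since $f_{n_i} \convergesTo f$ still holds.

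Next I would invoke Lemma~\ref{richardmaxcombined} applied to this subsequence. Parts (2) and (3) give $\accumulation{A} \subseteq \closure{B}$ and $\accumulation{B} \subseteq \closure{A}$. Combined with the disjointness $A \cap B = \emptyset$, these should upgrade to the separation hypothesis of the Convergence Lemma~\ref{fundamental}, namely $\closure{A} \cap B = \emptyset = A \cap \closure{B}$. The argument is: if $x \in A \cap \closure{B}$, then $x \in A$ and, since $x \notin B$ by disjointness, $x \in \accumulation{B} \subseteq \closure{A}$ gives nothing directly, so the cleaner route is to observe that $A \cap \closure{B} = A \cap (B \cup \accumulation{B}) = (A \cap B) \cup (A \cap \accumulation{B})$; the first term is empty by disjointness, and for the second, $A \cap \accumulation{B} \subseteq A \cap \closure{A}$ is unhelpful, so instead I expect one shows the closures are forced apart through an interplay with $F$-space separation. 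I anticipate the main obstacle here: reconciling the one-sided accumulation containments with genuine disjointness of closures. The likely resolution is that $A$ and $B$ being disjoint \emph{countable} sets in a compact $F$-space, together with the mutual containment of accumulation points, means each of $A, B$ accumulates only onto the closure of the other, and a boundary/closure bookkeeping (possibly using that $\closure{A} \cap \closure{B}$ consists of common accumulation points which the disjointness plus (2),(3) rule out) yields $\closure{A} \cap B = \emptyset = A \cap \closure{B}$.

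Once that separation is established, the Convergence Lemma~\ref{fundamental} applies directly to $f$ and the subsequence $\set{f_{n_i}}:{i \in \w}$ with witnessing points $P$, concluding $f \notin \closure{\set{f_{n_i}}:{i \in \w}}$. But $f_{n_i} \convergesTo f$ forces $f \in \closure{\set{f_{n_i}}:{i \in \w}}$, the desired contradiction. Hence no non-trivial convergent sequence exists, completing the proof.
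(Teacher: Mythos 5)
Your overall skeleton is the same as the paper's: reduce to pairwise distinct functions, extract a subsequence and witnessing points via Lemma~\ref{witnessingP}, upgrade the disjointness of $A$ and $B$ to the two-sided condition $\closure{A} \cap B = \emptyset = A \cap \closure{B}$, and finish with the Convergence Lemma~\ref{fundamental}. But the middle step---which you yourself flag as ``the main obstacle''---is a genuine gap, and the resolution you guess at does not work. Disjointness of $A$ and $B$ together with Lemma~\ref{richardmaxcombined}(2),(3) does \emph{not} rule out configurations such as $A \subseteq \accumulation{B}$: parts (2) and (3) are containments in the wrong direction for your purpose, since they assert that each set accumulates \emph{into} the closure of the other, so far from excluding common accumulation they make the configuration $A \subseteq \closure{B}\setminus B$ entirely consistent with everything you have established. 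No closure bookkeeping on a fixed subsequence can produce the separation; the $F$-space property (Lemma~\ref{Fspace}) only enters \emph{after} the two-sided condition is in hand, inside the proof of the Convergence Lemma.

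What the paper does at this point is the substantial second half of the proof that your proposal is missing. Since infinite closed subsets of a compact $F$-space contain a copy of $\beta\w$ and hence have cardinality at least $2^\cont$, Lemma~\ref{richardmaxcombined} forces $A$ and $B$ to be both finite or both infinite; the finite case is immediate, as disjoint finite sets have disjoint closures. In the infinite case one passes to further subsequences to make $A$, and then $B$, infinite and discrete, so that---countable sets being $\Cstar$-embedded in $F$-spaces---$\closure{A}\setminus A \cong \wstar \cong \closure{B}\setminus B$. One then shows $A \setminus \closure{B}$ must be infinite: were it finite, a tail of the sequence would give $A \subseteq \closure{B}\setminus B \cong \wstar$, and since $A$ is countable while $\wstar$ has density $\cont$, $\closure{A}$ would be a proper closed subset of $\closure{B}\setminus B$, contradicting $\accumulation{B} \subseteq \closure{A}$ from Lemma~\ref{richardmaxcombined}(3). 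Passing to the subsequence indexed by $A \setminus \closure{B}$ yields $A \cap \closure{B} = \emptyset$, and the symmetric argument using Lemma~\ref{richardmaxcombined}(2) yields $B \cap \closure{A} = \emptyset$; only then does the Convergence Lemma apply. Note also that each subsequence extraction changes $A$ and $B$, so Lemma~\ref{richardmaxcombined} must be re-invoked for the new subsequence each time (harmless, as convergence persists, as you observe). Your initial reduction to pairwise distinct $f_n$ and the final contradiction with $f \in \closure{\set{f_{n_i}}:{i\in\w}}$ are both correct; the gap is precisely the omitted dichotomy-and-refinement argument above.
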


\begin{proof}
Suppose for a contradiction that $f_n \convergesTo f \in \CkX$. Moving to a subsequence if necessary, we may assume by Lemma~\ref{witnessingP} that there are points $P= \set{p_n}:{n \in \w}$ such that $A=f(P)$ and $B = \set{f_n(p_n)}:{n \in \w}$ are disjoint. Note that since in a compact $F$-space, infinite closed subsets have size at least $2^\cont$, it follows from Lemma~\ref{richardmaxcombined} that either $A$ and $B$ are both finite or both infinite. If they are finite, then $\closure{A} \cap \closure{B} = \emptyset$ and we are done by the Convergence Lemma~\ref{fundamental}.

So assume that both $A$ and $B$ are infinite. Since every infinite regular space contains an infinite discrete subspace \cite[VII.2.4]{Dugundji}, we may assume, after moving to another subsequence of the $f_n$, that $A$ is infinite discrete. As before, by Lemma~\ref{richardmaxcombined} it follows that the corresponding $B$ is still infinite, so after moving to another subsequence, we can assume that both $A$ and $B$ are infinite and discrete. Since countable subsets of $F$-spaces are $\Cstar$-embedded, it follows that $\closure{A} \setminus A \cong \wstar \cong \closure{B} \setminus B$.

Now consider $A \setminus \closure{B}$. If this set is finite, then, after moving to a tail of our sequence, we may assume that $A \setminus \closure{B} = \emptyset$, and hence that $A \subseteq \closure{B} \setminus B \cong \wstar$. But since $A$ is countable and $\wstar$ has density $\cont$, it follows that $\closure{A} \subsetneq \closure{B} \setminus B$, contradicting Lemma~\ref{richardmaxcombined}(3). Thus, we may assume that the set $A \setminus \closure{B}$ is infinite, and hence, after moving to another subsequence, that $ A \cap \closure{B} = \emptyset.$

Next, consider $B \setminus \closure{A}$. Similarly to the previous case, the finiteness of this set will contradict Lemma~\ref{richardmaxcombined}(2). Thus, we may assume that the set $B \setminus \closure{A}$ is infinite, and hence, after moving to another subsequence, that $ B \cap \closure{A} = \emptyset.$

Thus, we have found a subsequence, say $\set{f_{n_i}}:{i \in \w}$, such that for the sets $A=\set{f(p_{n_i})}:{i \in \w}$ and $B = \set{f_{n_i}(p_{n_i})}:{i \in \w}$ we have 
$$ A \cap \closure{B} = \emptyset = B \cap \closure{A}.$$ It follows from the Convergence Lemma~\ref{fundamental} that $f$ is not an accumulation point of  $\set{f_{n_i}}:{i \in \w}$, contradicting $f_n \convergesTo f$. This last contradiction proves the theorem.
\end{proof}

As our final result we show that $\CkX$ is not an $F$-space, for any infinite zero-dimensional compact Hausdorff space $X$. In what follows, we let $\set{A_n}:{n \in \w}$ be an infinite collection of disjoint clopen subsets of $X$. We construct two open disjoint $F_\sigma$ sets which contain the identity, $\id$, in their closure. 

\begin{mylem}
\label{nhoodsidentity}
Let $X$ be a locally compact zero-dimensional space. Then the identity $\id \in \CkX$ has a neighbourhood basis of sets of the form $[A_1,A_1] \cap \cdots \cap [A_N,A_N]$ for $A_n \subset X$ disjoint clopen subsets of $X$. \qed
\end{mylem}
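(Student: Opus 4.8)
The statement asserts that sets of the form $[A_1,A_1] \cap \cdots \cap [A_N,A_N]$, where the $A_n$ are pairwise disjoint compact clopen subsets of $X$, form a neighbourhood basis at the identity $\id$ in $\CkX$. Since we already know from Theorem~\ref{bigintersectionresult} that $\CkX$ has a basis consisting of sets $\Intersection_{i=0}^n [A_i,B_i]$ with the $A_i$ pairwise disjoint compact clopen, the plan is to start from such a basic neighbourhood of $\id$ and shrink it to one of the required special form. The key observation is that since $\id$ lies in the basic set, we must have $\id(A_i) = A_i \subseteq B_i$ for every $i$; thus each $A_i$ is already contained in the corresponding $B_i$, which is exactly the configuration that makes shrinking to the diagonal form possible.

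\begin{proof}
By Theorem~\ref{bigintersectionresult}, the identity has a neighbourhood basis consisting of sets of the form $U = \Intersection_{i=0}^n [A_i,B_i]$ with $A_i, B_i$ compact clopen and $\explicitSet{A_0,\dots,A_n}$ pairwise disjoint; so it suffices to find, inside any such $U$ containing $\id$, a neighbourhood of $\id$ of the required special shape. Since $\id \in U$, for each $i$ we have $A_i = \id(A_i) \subseteq B_i$. Now consider the set
\[ W = \Intersection_{i=0}^n [A_i,A_i]. \]
Each $A_i$ is compact clopen, and the $A_i$ are pairwise disjoint, so $W$ has the desired form. Moreover $\id \in W$, since $\id(A_i) = A_i$ for every $i$. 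Finally $W \subseteq U$: if $g \in W$ then $g(A_i) \subseteq A_i \subseteq B_i$ for each $i$, so $g \in [A_i,B_i]$ for every $i$, whence $g \in U$. Thus every basic neighbourhood of $\id$ contains a neighbourhood of the required form, completing the proof.
\end{proof}

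The argument is essentially immediate once one notes that membership of $\id$ forces $A_i \subseteq B_i$; there is no real obstacle here. The only point requiring a moment's care is confirming that the tightened constraints $[A_i,A_i]$ still form an open neighbourhood of $\id$ (they do, being basic open sets in which $\id$ sits because $\id(A_i)=A_i$) and that passing from $B_i$ down to $A_i$ only shrinks the set, which is guaranteed by $A_i \subseteq B_i$. I would also remark that one does not even need the full strength of Theorem~\ref{bigintersectionresult}: the subbasis of Lemma~\ref{subbasiclemma} together with Lemma~\ref{prettycoollemma} already yields a basic neighbourhood of $\id$ with pairwise disjoint domains, which is all that is used.
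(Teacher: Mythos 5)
Your proof is correct and is precisely the argument the paper intends (the lemma is stated with an immediate \textnormal{qed}, relying on Theorem~\ref{bigintersectionresult}): membership of $\id$ in a basic set $\bigcap_i [A_i,B_i]$ with disjoint $A_i$ forces $A_i \subseteq B_i$, so $\bigcap_i [A_i,A_i]$ is a smaller neighbourhood of the required form. Your closing remark that Lemmas~\ref{subbasiclemma} and~\ref{prettycoollemma} already suffice is also accurate.
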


\begin{mylem}
\label{intersections}
For $A,B$ disjoint clopen subsets of $X$, and $C$ any third clopen set of $X$ we have $[C,C] \cap [A,B] = \emptyset$ if and only if both $C \cap A \neq \emptyset$ and $C \cap B = \emptyset$. 
\end{mylem}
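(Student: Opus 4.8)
The plan is to unravel the definition $[P,Q]=\{f\in\CkX : f(P)\subseteq Q\}$ and isolate the single obstruction that can force the intersection to be empty. A function $f$ lies in $[C,C]\cap[A,B]$ precisely when it sends $C$ into $C$ and $A$ into $B$, and these two requirements can only conflict on the overlap $C\cap A$, where a point would be simultaneously required to land in $C$ and in $B$. I expect the argument to split along the two directions of the biconditional, with essentially no hard analysis; the only substantive input is that on a zero-dimensional space a finite-range map which is constant on the blocks of a clopen partition is continuous, so that the witnesses I build genuinely lie in $\CkX$. Throughout I assume $A$, $B$ and $C$ are non-empty, the degenerate cases being either trivial or vacuous.

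For the implication ($\Leftarrow$), suppose $C\cap A\neq\emptyset$ and $C\cap B=\emptyset$, and fix a point $x\in C\cap A$. If some $f$ belonged to $[C,C]\cap[A,B]$, then $x\in C$ would force $f(x)\in C$ while $x\in A$ would force $f(x)\in B$, so that $f(x)\in C\cap B=\emptyset$, a contradiction; hence the intersection is empty.

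For the converse I argue by contraposition, producing a witness $f\in[C,C]\cap[A,B]$ whenever $C\cap A=\emptyset$ or $C\cap B\neq\emptyset$. If $C\cap B\neq\emptyset$, pick $z\in C\cap B$ and take the constant map $f\equiv z$; then $f(C)=\{z\}\subseteq C$ and $f(A)=\{z\}\subseteq B$. If instead $C\cap A=\emptyset$, pick $c\in C$ and $b\in B$ and let $f$ be the finite-range map sending $C\mapsto c$, $A\mapsto b$, and the clopen remainder $X\setminus(C\cup A)$ to $c$; since $C$ and $A$ are disjoint clopen sets this is a well-defined self-map of $X$ with $f(C)\subseteq C$ and $f(A)\subseteq B$. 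In either case the intersection is non-empty, which completes the contrapositive and hence the biconditional.

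The only point requiring care---the main obstacle, such as it is---is verifying that the piecewise-constant map in the last case is continuous; this is exactly where zero-dimensionality of $X$ enters, guaranteeing that a map constant on each block of a finite clopen partition belongs to $\CkX$, the same fact already invoked in the proof of Lemma~\ref{fairlyclearlemma}.
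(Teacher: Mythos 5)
Your proposal is correct and takes essentially the same approach as the paper: the paper also proves the forward direction by contraposition, exhibiting the constant map into $C \cap B$ (via $[X, C\cap B]$) and a piecewise map in $[C,C]\cap[X\setminus C, B]$ when $C \cap A = \emptyset$, and proves the converse by the same pointwise clash at $x \in C \cap A$. One minor remark: zero-dimensionality of $X$ is not actually needed for your witness, since $A$ and $C$ are clopen by hypothesis, so a map constant on each block of the finite clopen partition $\{C, A, X\setminus(C\cup A)\}$ is automatically continuous.
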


\begin{proof}
For the forwards direction, note that if the second condition is violated then $[C,C] \cap [A,B] \supseteq [X,C \cap B] \neq \emptyset$. And if the first condition is violated then $[C,C] \cap [A,B] \supseteq [C,C] \cap [X\setminus C,B] \neq \emptyset$.
  
Conversely, consider $f \in [C,C]$. If $C \cap A \neq \emptyset$ and $C \cap B = \emptyset$ then for any $x \in C \cap A$ we have $f (x) \in C$, so $f(x) \notin B$ and hence $f \notin [A,B]$.
\end{proof}

\begin{mylem}	\label{lem:approachi}
Let $V=[C_1,C_1] \cap \cdots \cap [C_N,C_N]$ be a basic open neighbourhood of the identity $\id \in \CkX$. Then if $I \subseteq \omega$ with $\cardinality{I} \geq 2^{N}$, then writing
\[ U_I = \bigcup_{n \neq m \in I} [A_n,A_m], \]
we have
\[ V \intersection U_I \neq \emptyset. \]
\end{mylem}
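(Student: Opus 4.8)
The plan is to exhibit an explicit finite-range witness $f \in V \cap U_I$. First I would invoke Lemma~\ref{nhoodsidentity} to assume that the clopen sets $C_1,\dots,C_N$ defining $V$ are pairwise disjoint, and write $R = X \setminus \bigcup_{i=1}^N C_i$ for the (clopen) remainder. Membership $f \in V$ is exactly the requirement that $f(C_i) \subseteq C_i$ for every $i \leq N$, whereas membership in $U_I$ only asks that $f(A_n) \subseteq A_m$ for a single pair $n \neq m$ in $I$. So the whole task reduces to choosing a suitable pair $(n,m)$ and then defining $f$ piecewise on clopen sets.

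To locate the pair, I would attach to each $n \in I$ its \emph{trace} $S_n = \{\, i \leq N : A_n \cap C_i \neq \emptyset \,\} \subseteq \{1,\dots,N\}$. The decisive observation is that the only genuine conflict between the demands $f(C_i) \subseteq C_i$ and $f(A_n) \subseteq A_m$ arises on $A_n \cap C_i$, where $f$ is forced to land in $A_m \cap C_i$; this target set is non-empty exactly when $i \in S_m$, so the condition $S_n \subseteq S_m$ dissolves every conflict at once. Since there are only $2^N$ possible traces while $\cardinality{I} \geq 2^N$, such a comparable pair must exist: if the trace map $n \mapsto S_n$ fails to be injective we obtain $S_n = S_m$ for some $n \neq m$, and otherwise it is a bijection onto $\mathcal{P}(\{1,\dots,N\})$, in which case some $n_0 \in I$ has $S_{n_0} = \emptyset \subseteq S_m$ for any other $m \in I$.

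Fixing $n \neq m$ in $I$ with $S_n \subseteq S_m$, I would define $f$ to be constant on each of finitely many clopen pieces partitioning $X$. For $i \in S_n$, send $A_n \cap C_i$ to a chosen point of $A_m \cap C_i$, which is non-empty because $i \in S_m$; send $A_n \cap R$ to any point of $A_m$; send $C_i \setminus A_n$ to a point of $C_i$; and send the remaining clopen set $X \setminus (A_n \cup \bigcup_{i} C_i)$ to an arbitrary fixed point. As these pieces are clopen and partition $X$, the resulting $f$ has finite range and is continuous. A direct check then gives $f(C_i) \subseteq C_i$ for every $i$ (splitting $C_i$ into $A_n \cap C_i$ and $C_i \setminus A_n$) and $f(A_n) \subseteq A_m$ (splitting $A_n$ into its traces and $A_n \cap R$), so $f \in V \cap [A_n,A_m] \subseteq V \cap U_I$, as required.

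The main, if modest, obstacle is pinning the counting bound at $2^N$ rather than $2^N+1$: a naive pigeonhole only forces two \emph{equal} traces once $\cardinality{I} > 2^N$, so the argument must instead use that $\mathcal{P}(\{1,\dots,N\})$ has a least element, which is precisely what makes the weaker relation $S_n \subseteq S_m$ available and sufficient. Confirming that $S_n \subseteq S_m$ (and not equality) genuinely removes all obstructions in the piecewise definition of $f$ is the single step warranting care.
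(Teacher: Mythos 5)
Your proof is correct, and it takes a genuinely different route from the paper's. The paper argues by induction on $N$: the base case $N=1$ is exactly Lemma~\ref{intersections}, which shows $[C_1,C_1]$ must meet $[A_n,A_m]\cup[A_m,A_n]$ for any $n\neq m$, and the inductive step splits $I$ into $I_N=\set{n\in I}:{A_n\cap C_N=\emptyset}$ and $I\setminus I_N$, one of which has size at least $2^{N-1}$, then applies the induction hypothesis to $V'=[C_1,C_1]\cap\cdots\cap[C_{N-1},C_{N-1}]$. Unwinding that recursion, the paper in effect finds $n\neq m$ whose traces agree on $\set{i}:{2\le i\le N}$ and symmetrizes in the last coordinate, whereas you find $n\neq m$ with $S_n\subseteq S_m$ by a single pigeonhole in the poset $\powerSet{\Set{1,\dots,N}}$; both arguments extract the bound $2^N$ from the size of this poset, but yours additionally uses its least element $\emptyset$ to avoid the symmetrization, which is a nice touch. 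Your approach buys two things: it is non-inductive, and it produces the witness $f\in V\cap[A_n,A_m]$ completely explicitly as a finite-range map constant on clopen pieces. The paper's inductive step, by contrast, tacitly requires upgrading a witness for $V'\cap[A_n,A_m]$ to one for $V\cap[A_n,A_m]$ (by redefining the function off $A_n$ to be the identity, and on $A_n\cap C_N$ to be a constant in $A_m\cap C_N$ when $n,m\in I\setminus I_N$); your construction makes this bookkeeping visible rather than implicit.

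One caveat on your opening move: the appeal to Lemma~\ref{nhoodsidentity} should be phrased as a reading of the hypothesis, not as a reduction. Replacing arbitrary $C_1,\dots,C_N$ by a disjoint refinement (the atoms of the algebra they generate) can inflate $N$ to as many as $2^N-1$ sets, and the bound $2^N$ in the statement is sensitive to $N$, so ``without loss of generality disjoint'' is not free. It is harmless here only because, coming directly after Lemma~\ref{nhoodsidentity}, ``basic open neighbourhood of the identity'' in this lemma means precisely a set $\bigcap_{i\le N}[C_i,C_i]$ with the $C_i$ pairwise disjoint --- and disjointness is genuinely needed, both by your piecewise definition (a point of $A_n\cap C_i$ must be subject to no constraint from any $C_j$ with $j\neq i$) and by the paper's own inductive step; for overlapping $C_i$ the relevant combinatorics involves traces on the atoms of the generated algebra, and the count $2^N$ no longer suffices. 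With that reading made explicit, your argument is complete.
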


\begin{proof}
For $N=1$, note that Lemma~\ref{intersections} says that $[C_1,C_1]$ intersects any $[A_n,A_m] \cup [A_m,A_n]$ for $n \neq m$. Using induction on $N$, we now we prove that $V$ intersects $U_I$. Let us first consider $[C_N,C_N] \cap U_I$. Recall that by Lemma~\ref{intersections}, $[C_N,C_N]$ does not intersect $[A_n,A_m]$ if and only if $C_N \cap A_m = \emptyset$ and $C_N \cap A_n \neq \emptyset$. Now put $I_N = \set{n \in I}:{A_n \cap C_N = \emptyset}$. Then $[C_N,C_N]$ in particular intersects every single set in $\set{[A_n,A_m]}:{n \neq m \in I_N}$ and $\set{[A_n,A_m]}:{n \neq m \in I \setminus I_N}$.

In other words, for $V$ to be disjoint from $U_I$ we must have $V'=[C_1,C_1] \cap \cdots \cap [C_{N-1},C_{N-1}]$ disjoint from $\bigcup_{n \neq m \in I_N} [A_n,A_m] \cup \bigcup_{n \neq m \in I \setminus I_N} [A_n,A_m]$. However, either $\cardinality{I_N}$ or $\cardinality{I \setminus I_N}$ is bigger than $2^{N-1}$, and therefore this is impossible by our induction assumption.
\end{proof}

Note that it follows that the identity is in the boundary of the open $F_\sigma$-set $U = \bigcup_{n \neq m \in \omega} [A_n,A_m]$. We now divide this open $F_\sigma$ into two halves, each containing the identity in its boundary. For $n, m \in \omega$ define
\[ \operator{Fix}\of{n,m} = \Intersection_{\substack{k < \operator{max}\of{n,m} \\ k \neq n,m}} \CompactOpen{A_k}{A_k}, \]
noting that $\operator{Fix}\of{n,m}$ is clopen in $\CkX$, and $\operator{Fix}\of{n,m} = \operator{Fix}\of{m,n}$.

\begin{mylem}	\label{lem:refinedApproachToi}
Suppose $V=[C_1,C_1] \cap \cdots \cap [C_N,C_N]$ is a basic open neighbourhood of the identity, and $V$ intersects $U = \bigcup_{n \neq m \in I} [A_n,A_m]$ for some $I \subseteq \omega$. Then $V$ also intersects
\[ \bigcup_{n \neq m \in I} \left ( [A_n,A_m] \intersection \operator{Fix}\of{n,m} \right )\]
\end{mylem}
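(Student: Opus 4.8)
The plan is to extract a single witnessing map from the hypothesis and then repair it on the ``fixed'' blocks by grafting on the identity. Since
\[ V \intersection U = \Union_{n \neq m \in I}\of{V \intersection \CompactOpen{A_n}{A_m}}, \]
the assumption $V \intersection U \neq \emptyset$ produces a pair $n \neq m \in I$ and a map $g \in V \intersection \CompactOpen{A_n}{A_m}$. I will then build from $g$ a new map $h$ lying in $V \intersection \CompactOpen{A_n}{A_m} \intersection \operator{Fix}\of{n,m}$ for the \emph{same} pair $(n,m)$; this places $h$ in the displayed union and so proves the lemma.

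To construct $h$, put $K = \Union_{\substack{k < \operator{max}\of{n,m} \\ k \neq n,m}} A_k$. Being a finite union of disjoint clopen sets, $K$ is clopen, and crucially $A_n \intersection K = A_m \intersection K = \emptyset$ since $K$ omits the indices $n$ and $m$. Define $h$ to equal the identity on $K$ and to equal $g$ on $X \setminus K$. As $\Set{K, X \setminus K}$ is a clopen partition of $X$ on whose pieces $h$ restricts to continuous maps, $h$ is continuous, i.e.\ $h \in \CkX$.

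Three memberships then remain to be checked. First, $A_n \subseteq X \setminus K$, so $\restrict{h}{A_n} = \restrict{g}{A_n}$ and hence $h\of{A_n} = g\of{A_n} \subseteq A_m$, giving $h \in \CompactOpen{A_n}{A_m}$. Second, for each relevant index $k$ one has $A_k \subseteq K$, so $h\of{A_k} = A_k$ and thus $h \in \operator{Fix}\of{n,m}$. The third and only delicate point---and the conceptual reason $\operator{Fix}\of{n,m}$ is defined from ``$\CompactOpen{A_k}{A_k}$''-conditions---is that $h \in V$: for any $x \in C_i$, either $x \in K$, whence $h\of{x} = x \in C_i$, or $x \in X \setminus K$, whence $h\of{x} = g\of{x} \in C_i$ because $g \in V$; so $h\of{C_i} \subseteq C_i$ for every $i$. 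Grafting the identity thus disturbs none of the $\CompactOpen{C_i}{C_i}$-constraints, precisely because the identity never moves a point of $C_i$ out of $C_i$. I do not anticipate a genuine obstacle; the one thing to get right is the bookkeeping that the target blocks $A_n, A_m$ are excluded from $K$, so that the behaviour of $g$ on $A_n$ witnessing $\CompactOpen{A_n}{A_m}$ survives the modification.
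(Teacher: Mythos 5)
Your proof is correct and takes essentially the same approach as the paper: both fix a witness $g \in V \cap [A_n,A_m]$ and graft the identity onto it over a clopen set, the key point in each case being that identity-grafting can never violate a constraint of the form $[C_i,C_i]$. The only (immaterial) difference is that the paper replaces $g$ by the identity on all of $X \setminus A_n$, whereas you do so only on the union of the blocks $A_k$ occurring in $\operator{Fix}\of{n,m}$.
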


\begin{proof}
Fix $f \in V \intersection U$. Since $f \in U$, there are indices $k\neq l \in I$ such that $f \in \CompactOpen{A_k}{A_l}$. Now define 
$$g = f\restriction{A_k} \cup \; \id \restriction{X \setminus A_k} \in \CkX.$$
One checks that $g \in V \cap \bigcup_{n \neq m \in I} \left ( [A_n,A_m] \intersection \operator{Fix}\of{n,m} \right )$.
\end{proof}

\begin{mythm}	\label{thm:CkNotFspace}
For an infinite zero-dimensional compact Hausdorff space $X$, its function space $\CkX$ is not an $F$-space.
\end{mythm}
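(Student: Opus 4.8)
The plan is to contradict the $F$-space property by exhibiting two \emph{disjoint open $F_\sigma$} subsets of $\CkX$ whose closures share the common point $\id$; this violates item (2) of the background facts on $F$-spaces, namely that in an $F$-space disjoint open $F_\sigma$-sets have disjoint closures. From the remark following Lemma~\ref{lem:approachi} we already know that $\id$ lies in the boundary of the open $F_\sigma$-set $U = \bigcup_{n \neq m \in \w} \CompactOpen{A_n}{A_m}$, so everything rests on splitting $U$ into two halves each of which still captures $\id$ in its closure.

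The splitting is governed by an intrinsic invariant of a map rather than by the indices of a witnessing pair. For $f \in \CkX$ let $M(f) = \Set{k \in \w \colon f(A_k) \not\subseteq A_k}$ be the set of blocks moved by $f$. Since each $\CompactOpen{A_k}{A_k}$ is clopen (Lemma~\ref{subbasiclemma}), the set $\Set{f \colon \min M(f) = j} = \bigcap_{i<j}\CompactOpen{A_i}{A_i} \cap \p{\CkX \setminus \CompactOpen{A_j}{A_j}}$ is clopen for each $j$. Every $f \in U$ moves some block, so $\min M(f)$ is defined on $U$, and I set
\[ U_0 = U \cap \bigcup_{j \text{ even}} \Set{f \colon \min M(f) = j}, \qquad U_1 = U \cap \bigcup_{j \text{ odd}} \Set{f \colon \min M(f) = j}. \]
These are disjoint (a natural number is not both even and odd), open, and $F_\sigma$ (each is the intersection of the open $F_\sigma$-set $U$ with an open $F_\sigma$-set). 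The point of indexing by $\min M(f)$, rather than by the parity of a witnessing pair $(n,m)$, is that it resolves the genuine obstacle here: a single map can witness membership in several sets $\CompactOpen{A_n}{A_m}$, and in particular a map swapping $A_n$ and $A_m$ lies in both $\CompactOpen{A_n}{A_m} \cap \operator{Fix}\of{n,m}$ and $\CompactOpen{A_m}{A_n}\cap\operator{Fix}\of{m,n}$, so a naive split by ``direction'' or by pair-parity is not disjoint, whereas $\min M(f)$ is unambiguous.

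It remains to show $\id \in \closure{U_0} \cap \closure{U_1}$; I treat $U_0$, the argument for $U_1$ being identical with odd indices. By Lemma~\ref{nhoodsidentity} it suffices to meet every basic neighbourhood $V = \CompactOpen{C_1}{C_1} \cap \cdots \cap \CompactOpen{C_N}{C_N}$ of $\id$. Choose $I \subseteq \w$ consisting of $2^N$ even numbers. By Lemma~\ref{lem:approachi}, $V$ meets $U_I = \bigcup_{n \neq m \in I}\CompactOpen{A_n}{A_m}$, and then by Lemma~\ref{lem:refinedApproachToi} there is $g \in V$ with $g \in \CompactOpen{A_k}{A_l} \cap \operator{Fix}\of{k,l}$ for some $k \neq l$ in $I$. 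Here is where $\operator{Fix}\of{k,l}$ does its work: it fixes every block $A_j$ with $j < \max(k,l)$ and $j \notin \Set{k,l}$, so every block moved by $g$ that lies weakly below $\max(k,l)$ belongs to $\Set{k,l}$. Since $g(A_k)\subseteq A_l$ forces $k \in M(g)$ with $k \le \max(k,l)$, we get $\min M(g) \in \Set{k,l}$, whence $\min M(g)$ is even (both $k,l$ are). Thus $g \in U_0$ and $V \cap U_0 \neq \emptyset$, giving $\id \in \closure{U_0}$.

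Having produced two disjoint open $F_\sigma$-sets $U_0, U_1$ with $\id \in \closure{U_0} \cap \closure{U_1}$, the $F$-space property (2) is violated, so $\CkX$ is not an $F$-space. The only delicate point is the disjointness of the two halves---the swap maps show that the obvious decompositions overlap---and this is exactly what the invariant $\min M(f)$ is designed to circumvent; the complementary role of $\operator{Fix}\of{k,l}$ is to guarantee that the witness $g$ delivered by Lemma~\ref{lem:refinedApproachToi} has its least moved block of the prescribed parity.
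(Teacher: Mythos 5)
Your proof is correct, and its skeleton is the paper's: the same ambient set $U = \bigcup_{n \neq m \in \omega}\CompactOpen{A_n}{A_m}$, the same appeal to Lemmas~\ref{nhoodsidentity}, \ref{lem:approachi} and~\ref{lem:refinedApproachToi} (applied to a set $I$ of $2^N$ indices of a fixed parity), and the same even/odd idea. Where you genuinely differ is in the decomposition. The paper defines $U_E$ and $U_O$ from even-indexed (respectively odd-indexed) pairs intersected with $\operator{Fix}\of{n,m}$, so disjointness is a consequence of the $\operator{Fix}$ sets (the paper's Claim~1, left as an exercise: if $f$ lay in both, $f$ moves $A_{2n}$ while $\operator{Fix}\of{2p+1,2q+1}$ would force $2n \geq \max(2p+1,2q+1)$, and symmetrically $2p+1 \geq \max(2n,2m) \geq 2n$, a contradiction). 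You instead split the single set $U$ by the intrinsic invariant $\min M(f)$, making disjointness definitional, and spend $\operator{Fix}\of{k,l}$ in the closure argument to pin $\min M(g)$ into $\Set{k,l}$; your verification there ($\min M(g) \leq k$, and every moved index strictly below $\max\of{k,l}$ lies in $\Set{k,l}$) is sound, as is the clopenness of $\Set{f \colon \min M(f)=j}$ via Lemma~\ref{subbasiclemma}. The trade is fair: the paper's sets are smaller (indeed $U_E \subseteq U_0$ and $U_O \subseteq U_1$, since the same computation shows any $f \in U_E$ has even $\min M(f)$) at the cost of a disjointness argument, while your halves are disjoint by fiat. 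Two minor remarks: your $U_0, U_1$ are countable unions of clopen sets, hence in fact cozero (consider $\sum_n 2^{-n}\chi_{C_n}$), so you could equally invoke background fact (1) rather than fact (2), matching the paper's closing line about disjoint cozero-sets; and, like the paper, you tacitly need the $A_n$ non-empty (otherwise $g(A_k) \subseteq A_l$ does not put $k$ into $M(g)$), which is harmless since an infinite compact zero-dimensional Hausdorff space admits infinitely many pairwise disjoint non-empty clopen sets.
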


\begin{proof}
As above, let $\set{A_n}:{n \in \w}$ be an infinite collection of disjoint clopen subsets of $X$. Now define
\[ U_E = \Union_{n \neq m \in \omega} [A_{2n}, A_{2m}] \intersection \operator{Fix}\of{2n,2m} \]
and
\[ U_O = \Union_{n \neq m \in \omega} [A_{2n+1}, A_{2m+1}] \intersection \operator{Fix}\of{2n+1,2m+1}. \]
Then $U_E, U_O$ are open $F_\sigma$ subsets of $\CkX$. The reader is invited to verify the following two claims.

\begin{myclaim}
$U_E \intersection U_O = \emptyset$.
\end{myclaim}

%

\begin{myclaim}
We have $i \in \closure{U_E} \setminus U_E$. Similarly $i \in \closure{U_O} \setminus U_O$.
\end{myclaim}

The proof of the first claim is left as an exercise; the second claim follows from Lemmas~\ref{lem:approachi} and \ref{lem:refinedApproachToi}.
Thus, we have found two disjoint cozero-sets of $\CkX$ with intersecting closures. Hence, $\CkX$ is not an $F$-space.
\end{proof}

Lastly, we remark that $\Cwstar$ also fails to have the $G_\delta$ property: $\Cwstar$ contains a non-empty $G_\delta$ with empty interior. Indeed, let $A\oplus B$ be a non-trivial clopen partition of $\wstar$, and let $f$ be an autohomeomorphism of $\wstar$ swapping $A$ and $B$. If we let $\set{A_n}:{n\in \w}$ be a collection of disjoint clopen sets contained in $A$, then $ \bigcap_{n \in \w} [A_n,f(A_n)]$ is a non-empty $G_\delta$ with empty interior.


\section{Open Questions}

We have seen in Section~\ref{sectionPpoints}  that $\Cwstar$ is not homogeneous: it contains weak $P$-points (for example, all autohomeomorphisms of $\wstar$ and constant functions $f_p$ with $p \in \wstar$ a weak $P$-point) and it contains non-weak $P$-points (for example, constant functions $f_x$ where $x \in \wstar$ is not a weak $P$-point). We also saw that consistently, $\Cwstar$ also contains $P$-points. Thus, we have found two, and consistently three, orbits of $\Cwstar$.

\begin{myquest}
What is the number of orbits of $\Cwstar$?
\end{myquest}

\begin{myquest}
Is it true that if $x,y \in \wstar$ are in different orbits of $\wstar$ then $f_x,f_y$ are in different orbits of $\Cwstar$? 
\end{myquest}

By a result of Frol\'ik \cite{Frolik}, a positive answer would imply that $\Cwstar$ has the maximal number of distinct orbits, namely $2^\cont$.

\begin{myquest}
Can a constant weak $P$-point function $f_p$ and an autohomeomorphisms $f \in \Cwstar$ lie in the same orbit?
\end{myquest}

The last question is interesting in light of the fact that cardinality of the range of a function is not an invariant of orbits of $\Cwstar$: the following observation, which was pointed out to us by R.\ Suabedissen, shows that constant maps can lie in the same orbit as some maps with finite range.

\begin{myobs}
Let X be a locally compact zero-dimensional  space. For any clopen partition $X = A_1 \oplus \ldots \oplus A_n$ and autohomeomorphisms $h_i \colon X \rightarrow X$ for $i \leq n$,  $$\Psi \colon \CkX \rightarrow \CkX, f \mapsto f \circ \p{h_1|_{A_1} \cup \cdots \cup h_n|_{A_n}}$$ is an autohomeomorphism of $\CkX$.
\end{myobs}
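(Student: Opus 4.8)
The plan is to reduce everything to the single fact that \emph{right-composition by a homeomorphism is an autohomeomorphism of $\CkX$}. Write $g = h_1|_{A_1} \cup \cdots \cup h_n|_{A_n}$, so that $\Psi\of{f} = f \circ g$. First I would record that $g \colon X \to X$ is continuous: the sets $A_1, \dots, A_n$ form a finite clopen cover of $X$ and $g$ agrees with the continuous map $h_i|_{A_i}$ on each $A_i$, so continuity follows by gluing over this cover. Consequently $\Psi\of{f} = f \circ g$ is a composite of continuous maps and hence a genuine element of $\CkX$, so $\Psi$ is at least well-defined as a self-map of $\CkX$.

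The heart of the argument is then purely formal and works for an arbitrary homeomorphism $g$. Since $g$ is a bijection, $\Psi$ is a bijection with inverse $f \mapsto f \circ g^{-1}$. For continuity it suffices to examine the defining subbasis of the compact-open topology: for compact $C \subseteq X$ and open $U \subseteq X$,
\[ \Psi^{-1}\of{\CompactOpen{C}{U}} = \set{f \in \CkX}:{ f\of{g\of{C}} \subseteq U } = \CompactOpen{g\of{C}}{U}, \]
and $g\of{C}$ is compact as the continuous image of a compact set. Thus the preimage of every subbasic open set is again subbasic open, so $\Psi$ is continuous; applying the identical computation with $g^{-1}$ in place of $g$ shows that $\Psi^{-1}$ is continuous as well. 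Hence $\Psi$ is an autohomeomorphism of $\CkX$.

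The one step that genuinely uses the hypotheses --- and which I expect to be the main obstacle --- is verifying that $g$ really is a \emph{homeomorphism} of $X$, rather than merely a continuous self-map. Continuity is automatic, but bijectivity is not: each $h_i$ carries the clopen set $A_i$ homeomorphically onto the clopen set $h_i\of{A_i}$, and one needs the images $\set{h_i\of{A_i}}:{1 \le i \le n}$ to form a clopen partition of $X$ in order to glue the continuous maps $(h_i)^{-1}|_{h_i(A_i)}$ into a continuous inverse $g^{-1}$. I would therefore make this the focus of the verification, since once $g$ is known to be a homeomorphism the function-space part above is an immediate subbasis computation.
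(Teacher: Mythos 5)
You put your finger on exactly the right spot in your final paragraph, but the obstacle you defer there is not closable: nothing in the stated hypotheses forces the images $h_i\of{A_i}$ to be pairwise disjoint or to cover $X$, so $g = h_1|_{A_1} \cup \cdots \cup h_n|_{A_n}$ need not be a bijection, and when it is not, $\Psi$ fails even to be injective. Concretely, take $X = 2^\w$, let $A_1, A_2$ be the clopen sets determined by the first coordinate, $h_1 = \id$, and $h_2$ the autohomeomorphism flipping the first coordinate, so that $h_2\of{A_2} = A_1$. Then $g$ maps all of $X$ into $A_1$ and restricts to the identity there, whence $g \circ g = g$ and $\Psi\of{\id} = g = \Psi\of{g}$ while $\id \neq g$. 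So the Observation as literally stated is false, and your refusal to claim the bijectivity step was sound judgement rather than a defect of exposition. Under the additional hypothesis you identify --- that $\Set{h_1\of{A_1}, \dots, h_n\of{A_n}}$ is again a clopen partition of $X$ --- $g$ is a homeomorphism by your gluing argument, and the rest of your proof goes through verbatim.

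Comparing with the paper: its proof silently commits the very error you flagged, asserting without verification that the inverse of $\Psi$ is $f \mapsto f \circ \p{h_1^{-1}|_{A_1} \cup \cdots \cup h_n^{-1}|_{A_n}}$. This formula is correct when each $h_i\of{A_i} = A_i$, but not in general --- even under the partition hypothesis the correct inverse is yours, $f \mapsto f \circ g^{-1}$ with $g^{-1} = h_1^{-1}|_{h_1\of{A_1}} \cup \cdots \cup h_n^{-1}|_{h_n\of{A_n}}$, whose restrictions are taken over the \emph{image} partition. For the continuity half your route also differs: you compute directly on the subbasis, $\Psi^{-1}\of{\CompactOpen{C}{U}} = \CompactOpen{g\of{C}}{U}$, whereas the paper invokes continuity of the composition map $\CkX \times \CkX \to \CkX$ for proper and admissible topologies via an exercise in Engelking; both are valid, and yours is the more elementary and self-contained. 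Finally, observe that no precomposition operator can serve the paper's stated purpose of putting a constant map in the same orbit as a non-constant finite-range map, since $f_p \circ g = f_p$ for \emph{every} $g$; the operator that does the job, and which was presumably intended, is the piecewise postcomposition $f \mapsto \p{h_1 \circ f}|_{A_1} \cup \cdots \cup \p{h_n \circ f}|_{A_n}$, which is a bijection for arbitrary autohomeomorphisms $h_i$ (invert by replacing $h_i$ with $h_i^{-1}$) and is continuous with continuous inverse by the same style of subbasis computation, namely $\Psi^{-1}\of{\CompactOpen{C}{U}} = \bigcap_{i \leq n}\CompactOpen{C \cap A_i}{h_i^{-1}\of{U}}$.
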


\begin{proof}
  To see that $\Psi$ is continuous, note that by zero-dimensionality, $h_1|_{A_1} \cup \cdots \cup h_n|_{A_n}$ is a continuous map $X \to X$. We know that the compact-open topology is proper and admissable, and \cite[Exercise 2.6.D(c)]{Eng} says that for proper and admissable topologies on spaces of self-maps, composition is a continuous map. Hence $\function{\circ}{\CkX \times \CkX}{\CkX}$ is continuous. One then easily sees that $\Psi$ is continuous.

The inverse $\Psi^{-1} \colon \CkX \rightarrow \CkX, f \mapsto f\circ \p{h^{-1}_1|_{A_1} \cup \cdots \cup h^{-1}_n|_{A_n}}$ is continuous by the same argument as above.
\end{proof}

\begin{myquest}
Is the space $\Cwstar$ normal? Is it \v{C}ech-complete?
\end{myquest}

\begin{myquest}
For which compact zero-dimensional Hausdorff spaces $X$ is the function space $\CkX$ Baire? What about $X = 2^{\w_1}$?
\end{myquest}

	  

\begin{thebibliography}{99}
		
		\bibitem{Bell}
			M.G.\ Bell, Compact ccc non-separable spaces of small weight, Topology Proc.\ \textbf{5} (1980), 11--25.
			
		\bibitem{Ultrafilters}
			W.W.\ Comfort, S.\ Negrepontis,  The Theory of Ultrafilters, Springer, New York (1974).
			
		 \bibitem{Debs}
                  	G.\ Debs, Strategies gagnantes dans certains jeux topologiques, Fund.\ Math.\ \textbf{126} (1985), 93--105.
				
		\bibitem{Dow82}
		A.\ Dow, Weak $P$-points in compact ccc $F$-spaces, Trans.\ Amer.\ Math.\ Soc.\ \textbf{269} (2) (1982), 557--565
					
		\bibitem{DowMill80}			
		A.\ Dow, J.\ van Mill, On nowhere dense ccc P-sets, Proc.\ Amer.\ Math.\ Soc.\ \textbf{80} (1980), 697--700.
		
		\bibitem{Dugundji}
			J.\ Dugundji, Topology, Allyn and Bacon, Boston, 1965. 

		\bibitem{Eng}
			R.\ Engelking, General topology, second ed., Sigma Series in Pure Mathematics, vol.~6, Heldermann Verlag, Berlin, 1989.
								
		\bibitem{Frolik}
			 Z.\ Frol\'ik, Sums of ultrafilters, Bull.\ Amer.\ Math.\ Soc.\ \textbf{73} (1967), 87--91. 
			 
		\bibitem{Rings}
			L.\ Gillman, M.\ Jerison, Rings of Continuous Functions, Springer-Verlag, New York, 1976.

		\bibitem{GM}
			G.\ Gruenhage, D.\ Ma, Baireness of $\CkX$ for locally compact $X$, Topology Appl.\ \textbf{80} (1997), 131--139.
			
		\bibitem{Hola}
			L.\ Hol\'a, L.\ Zsilinszky, \v{C}ech-completeness and related properties of the generalized compact-open topology, J.\ Applied Analysis \textbf{16} (2010), 151--169.
			
		\bibitem{Kechris}
			A.\ Kechris, Classical Descriptive Set Theory, GTM 156, Springer, 1995.
		
		\bibitem{Kunen}
			K.\ Kunen, Weak $P$-points in $\N^*$, Proc.\ Boly\'ai J\'anos Soc.\ Colloq.\ on Topology (Budapest, 1978), vol. 2, 741--750.
			
		\bibitem{laflamme}
			C.\ Laflamme, J.-P.\ Zhu, The Rudin-Blass Ordering of Ultrafilters, J.\ Symbolic Logic Volume \textbf{63}(2) (1998), 584--592.
			
		\bibitem{LuptonThesis}
			R.\ Lupton, 2-tactics in the Choquet game, and the Filter Dichotomy, DPhil Thesis, University of Oxford, 2014.
			
		\bibitem{Magill}
			K.D.\ Magill, Jr., A survey of semigroups of continuous selfmaps. Semigroup Forum \textbf{11}(3) (1975/76), 189--282. 	
									
		\bibitem{Mill79}
			J.\ van Mill, Weak $P$-points in compact $F$-spaces, Topology Proc.\ \textbf{4} (1979), 609--628.	
			
		\bibitem{Intro}
			J.\ van Mill, An Introduction to $\beta \w$, Handbook of Set-Theoretic Topology. Edited by K. Kunen and J.E. Vaughan, Elsevier Science, 1984.	 
			
		\bibitem{parov}
			I.I.\ Parovi\v{c}enko, A universal bicompact of weight $\aleph$, Soviet Math.\ Dokl. {\bf 4} (1963), 592--595.	
					
		\bibitem{RudinComposants}
		  M.E.\ Rudin, Composants and $\beta {N}$, Proceedings of the Washington State University Conference on General Topology (1970), 117--119

		\bibitem{Rudin}
			W.\ Rudin, Homogeneity problems in the theory of \v{C}ech compactifications. Duke Math.\ J.\ \textbf{23} (1956), 409-419.

               
 
	 \end{thebibliography}
\end{document}